\newcommand{\ds}{\displaystyle}
\newcommand{\NN}{\mathbb N}
\newcommand{\CC}{\mathbb C}
\newcommand{\RR}{\mathbb R}
\newcommand{\ZZ}{\mathbb Z}
\newcommand{\DD}{\mathcal D}
\newcommand{\SSS}{\mathcal S}
\newcommand{\VV}{\mathcal V}
\newcommand{\MM}{\mathcal M}
\theoremstyle{plain}
\newtheorem{theorem}{Theorem}[section]
\newtheorem{proposition}[theorem]{Proposition}
\newtheorem{lemma}[theorem]{Lemma}
\newtheorem{corollary}[theorem]{Corollary}
\theoremstyle{remark}
\newtheorem{remark}[theorem]{Remark}
\theoremstyle{definition}
\newtheorem{definition}[theorem]{Definition}
\newtheorem{example}[theorem]{Example}
\numberwithin{equation}{section}
\newcommand{\supp}{\operatorname{supp}}
\newcommand{\Conf}{\operatorname{Conf}}
\newcommand{\Sym}{\operatorname{Sym}}
\newcommand{\Sp}{\operatorname{Sp}}
\newcommand{\Mp}{\operatorname{Mp}}
\DeclareMathOperator*{\esssup}{ess\,sup}
\begin{document}

\author[S. Pilipovi\' c]{Stevan Pilipovi\' c}
\thanks{The work of S. Pilipovi\'c was partially supported by the Serbian Academy of Sciences and Arts, project, F10.}
\address{Department of Mathematics and Informatics,
University of Novi Sad, Trg Dositeja Obradovi\'{c}a 4, 21000 Novi Sad, Serbia}
\email{stevan.pilipovic@dmi.uns.ac.rs}

\author[B. Prangoski]{Bojan Prangoski}
\thanks{The work of B. Prangoski was partially supported by the bilateral project ``Microlocal analysis and applications'' funded by the Macedonian and Serbian academies of sciences and arts.}
\address{Department of Mathematics, Faculty of Mechanical
Engineering-Skopje, Ss. Cyril and Methodius University in Skopje, Karposh 2 b.b., 1000 Skopje, Macedonia}
\email{bprangoski@yahoo.com}

\title[Characterisation of the Weyl-H\"ormander classes]{Characterisation of the Weyl-H\"ormander classes by time-frequency shifts}

\keywords{Weyl-H\"ormander classes, H\"ormander metric, pseudo-differential operators, almost diagonalisation of pseudo-differential operators, short-time Fourier transform, modulation spaces}

\subjclass[2010]{35S05, 47G30, 46E35, 46F05}

\frenchspacing

\begin{abstract}
We characterise the Weyl-H\"ormander symbol classes $S(M,g)$ via the growth of the action of the corresponding $\Psi$DOs on time-frequency shifts of a single test function. For this purpose, we introduce a geometric short-time Fourier transform which is well-suited for the analysis of $S(M,g)$. We define new modulation spaces and achieve the characterisation of the Weyl-H\"ormander classes by showing that they are intersections of such modulation spaces suitable for the time-frequency characterisation.
\end{abstract}
\maketitle

\section{Introduction}

In \cite{sjostrand1,sjostrand2}, Sj\"ostrand studied a class of pseudo-differential operators continuous on $L^2(\RR^n)$ which is closed under composition and taking inverses (when they exist). The corresponding space of Weyl symbols, nowadays known as the Sj\"ostrand algebra, is the modulation space \cite{F2,F3,Gr1}
$$
M^{\infty,1}(\RR^{2n})=\{a\in\SSS'(\RR^{2n})\,|\, V_{\varphi}a\in L^{\infty,1}(\RR^{4n})\}\footnote{Sj\"ostrand's original definition is different but amounts to the same space.}
$$
where $L^{\infty,1}(\RR^{4n})$ is the standard mixed-norm Lebesgue space and $V_{\varphi}$ is the short-time Fourier transform (from now, abbreviated as STFT) with window $\varphi\in\SSS(\RR^{2n})\backslash\{0\}$, i.e. $V_{\varphi}a(X,\Xi)=\langle a, e^{-2\pi i \Xi\, \cdot}\,\overline{\varphi(\cdot-X)}\rangle$, $X,\Xi\in\RR^{2n}$. Sj\"ostrand later generalised the theory in \cite{sjostrand3} to broader classes of symbols with limited regularity. These articles have sprung a line of research concerning $\Psi$DOs and FIOs with symbols in modulation and Wiener amalgam spaces and their application to PDEs; see \cite{ACT,ben-oko,bmntt,CN,CR,CGNR,CNT,fei-gn,Gr2,gro-rz,GT,kob-sug-t,T-1,0T} and the references therein. In \cite{sjostrand2}, Sj\"ostrand proved the following characterisation of $M^{\infty,1}(\RR^{2n})$ (see the proof of \cite[Proposition 3.1]{sjostrand2}). Let $a\in\SSS'(\RR^{2n})$ and let $\chi(x)=e^{-\pi |x|^2}$, $x\in\RR^n$. Then $a\in M^{\infty,1}(\RR^{2n})$ if and only
$$
\exists f\in L^1(\RR^{2n})\; \mbox{such that}\; |\langle a^w \pi(X)\chi,\overline{\pi(\Xi)\chi}\rangle|\leq f(\Xi-X),\quad X,\Xi\in\RR^{2n};
$$
here $\pi(X)$, $X\in\RR^{2n}$, stands for the time-frequency shift $\pi(X)\phi(y)=e^{2\pi i \xi y}\phi(y-x)$, $X=(x,\xi)\in\RR^{2n}$. To be precise, this result is not part of the claim of \cite[Proposition 3.1]{sjostrand2} but it is shown in its proof. The proof employs so called families of uniformly confined symbols introduced by Bony and Lerner \cite{bon-ler,bony1} and they will play a significant role in this article as well; discrete version of such types of families were previously employed by H\"ormander \cite{hormander}. Similar constructions were also considered by Feichtinger \cite{fei4} in the context of Wiener amalgam and decomposition spaces. Later, Gr\"ochenig and Rzeszotnik \cite{gro-rz} (see also \cite{Gr2}) employed time-frequency and Banach algebra techniques to generalise the results of \cite{sjostrand2}. As a byproduct of their analysis, they proved the following characterisation of the H\"ormander algebra $S^0_{0,0}(\RR^{2n})$ \cite[Theorem 6.2]{gro-rz}. Let $\chi\in\SSS(\RR^n)\backslash\{0\}$ be such that $\{\pi(\lambda)\chi\,|\, \lambda\in\Lambda\}$ is a tight Gabor frame for $L^2(\RR^n)$ on the lattice $\Lambda$ in $\RR^{2n}$. Then for $a\in\SSS'(\RR^{2n})$, the following conditions are equivalent:
\begin{itemize}
\item[$(i)$] $a\in \SSS^0_{0,0}(\RR^{2n})$;
\item[$(ii)$] for every $s\geq 0$ there is $C_s\geq 1$ such that $|\langle a^w \pi(X)\chi,\overline{\pi(\Xi)\chi}\rangle|\leq C_s(1+|\Xi-X|)^{-s}$, $X,\Xi\in\RR^{2n}$;
\item[$(iii)$] for every $s\geq 0$ there is $C_s\geq 1$ such that $|\langle a^w \pi(\lambda_1)\chi,\overline{\pi(\lambda_2)\chi}\rangle|\leq C_s(1+|\lambda_2-\lambda_1|)^{-s}$, $\lambda_1,\lambda_2\in\Lambda$.
\end{itemize}
The assumption that $\{\pi(\lambda)\chi\,|\, \lambda\in\Lambda\}$ is a tight Gabor frame for $L^2(\RR^n)$ is only needed for the equivalence with $(iii)$. The property $(iii)$ is usually called almost diagonalisation as it shows that the matrix of the pseudodifferential operator with respect to the Gabor frame is rapidly decaying outside of the diagonal; the property $(ii)$ can be viewed as a continuous version of this. Of course, the important part is that the symbol class $S^0_{0,0}(\RR^{2n})$ (resp., $M^{\infty,1}(\RR^{2n})$ in the above result of Sj\"ostrand) is characterised by this property; see \cite[Section 5.2]{CR}, \cite{CGNR,CNT,CGN} and the references therein for other such characterisations of modulation and Wiener amalgam spaces and almost diagonalisation of $\Psi$DOs and FIOs. It is natural to ask whether such characterisation can be obtained for the symbol classes of the frequently employed global calculi. In the main result of this article (Theorem \ref{main-theorem-dia}) we provide a characterisation in the spirit of $(ii)$ of the general Weyl-H\"ormander classes $S(M,g)$ \cite{hormander,horm3}. They include as special cases the symbol classes of almost all global calculi that appear in the literature, like the Shubin and SG-calculus \cite{NR,Shubin}, the Beals-Fefferman calculus \cite{bf1}, the H\"ormander $S_{\rho,\delta}$-calculus \cite{horm3,Shubin}, etc. Our main result covers as a special case the characterisation $(ii)$ of $S^0_{0,0}(\RR^{2n})$. A discrete characterisation in the spirit of $(iii)$ still remains as an open problem.\\
\indent The general idea in the proof of the main result of the article is to extend the time-frequency analysis to the Weyl-H\"ormander classes. However, the techniques from \cite{Gr2,gro-rz} are unlikely to work in other cases apart from the classes $S^m_{0,0}$. The reason for this is that the STFT is well-suited for analysing distributions when the underlying space carries the Euclidean metric. But, for the purposes of analysing the classes $S(M,g)$, the phases space is generally not Euclidean: it carries a metric $g$ that defines $S(M,g)$ which is never Euclidian except in the case of $S^m_{0,0}$. For this purpose, we generalise the STFT by replacing the window with a uniformly confined family of symbols; such families were introduced by Bony and Lerner in their seminal paper \cite{bon-ler} (cf. \cite{lernerB,unterberger}). These families keep track of the growth of the metric as the translates of the window in the classical STFT do for the Euclidian metric. Because of this, the generalised STFT which we introduce is better suited for the analysis of the general Weyl-H\"ormander classes. With its help, we define new modulation spaces. Our goal then is to show that $S(M,g)$ can be represented as an intersection of such modulation spaces and prove a characterisation of each of them in the same spirit as $(ii)$; this will give the desired characterisation of $S(M,g)$ as well.\\
\indent The paper is organised as follows. In Subsection \ref{properties} we collect several technical results concerning H\"ormander metrics. Sections \ref{C-family} and \ref{gms} are the core of the article. We start Section \ref{C-family} by recalling the notion of uniformly confined families of symbols and show several technical results about them. Then we define our generalisation of the STFT and study its mapping properties. Section \ref{gms} is devoted to the generalised modulation spaces and their properties. Section \ref{sec dia} contains the main result of the article: the characterisation of the Weyl-H\"ormander classes via the action of the corresponding Weyl operators on time-frequency shifts. However, these are no longer of the form $\pi(X)\chi$ but also carry encoded information about the metric. When $g$ is symplectic (a typical example is the metric that generates the classes $S^m_{\rho,\rho}(\RR^{2n})$, $\rho\in[0,1)$), the time-frequency shifts reduce to $\pi(X)$ but are intertwined with metaplectic operators that come from $g$ (Corollary \ref{cor-for-sym-metric-metplectope}). In the special case of the Euclidian metric, these metaplectic operators boil down to the identity operator. In Subsection \ref{ex111}, we specialise the main result to the frequently employed global calculi. At the very end, there are two appendices containing the proofs of several technical results stated in the previous sections.\\
\indent As a showcase, we present here the characterisation of $S^0_{\rho,\delta}(\RR^{2n})$, $0\leq \delta\leq \rho\leq 1$, $\delta<1$. Let $\theta_0\in\mathcal{C}^{\infty}([0,\infty))\backslash\{0\}$ be a non-negative, non-increasing function satisfying $\supp\theta_0\subseteq [0,1]$. Pick $r\in(0,1/2]$ and set
$$
\theta_{(x,\xi)}(y,\eta):=\theta_0(r^{-2}\langle \xi\rangle^{2\delta}|x-y|^2+r^{-2}\langle \xi\rangle^{-2\rho}|\xi-\eta|^2),\quad (x,\xi),(y,\eta)\in\RR^{2n}.
$$
For each $\xi\in \RR^n$, denote by $Q_{\xi}$ the diagonal $2n\times 2n$ matrix whose first $n$ entries along the diagonal are $\langle \xi\rangle^{\delta}$ while the second $n$ entries are $\langle\xi\rangle^{-\rho}$ and let $\Psi_{\xi}$ be the operator
$$
\Psi_{\xi}f(y,\eta):=f(Q_{\xi}^{-1}(y,\eta))=f(\langle \xi\rangle^{-\delta}y,\langle\xi\rangle^{\rho}\eta).
$$
Pick any $\chi\in\SSS(\RR^n)$ such that its Wigner transform is not $0$ at the origin (i.e., $W(\chi,\chi)(0)\neq 0$). Then, $a\in\SSS'(\RR^{2n})$ belongs to $S^0_{\rho,\delta}(\RR^{2n})$ if and only if for every $N>0$ there is $C_N>0$ such that
\begin{multline*}
\left|\left\langle \left(\Psi_{\frac{\xi+\eta}{2}} \left(a\theta_{\left(\frac{x+y}{2},\frac{\xi+\eta}{2}\right)}\right)\right)^w \pi\left(Q_{\frac{\xi+\eta}{2}}(x,\xi)\right)\chi, \overline{\pi\left(Q_{\frac{\xi+\eta}{2}}(y,\eta)\right)\chi}\right\rangle\right|\\
\leq C_N(1+\langle\xi+\eta\rangle^{\delta}|x-y|+\langle \xi+\eta\rangle^{-\rho}|\xi-\eta|)^{-N},\quad (x,\xi),(y,\eta)\in\RR^{2n}.
\end{multline*}
When $\rho=\delta\in[0,1)$, $\theta_{(x,\xi)}$ can be dropped from these bounds and they still characterise the elements of $S^0_{\rho,\rho}(\RR^{2n})$. Written in this form, when $\rho=0$ they reduce to the condition $(ii)$ we mentioned above. Furthermore, one can rewrite the latter as follows. The tempered distribution $a$ belongs to $S^0_{\rho,\rho}(\RR^{2n})$, $0\leq \rho<1$, if and only if for every $N>0$ there is $C_N>0$ such that
\begin{multline*}
\left|\left\langle a^w \pi(x,\xi)\Phi_{\frac{\xi+\eta}{2}}\chi, \overline{\pi(y,\eta)\Phi_{\frac{\xi+\eta}{2}}\chi}\right\rangle\right|\\
\leq C_N(1+\langle\xi+\eta\rangle^{\rho}|x-y|+\langle \xi+\eta\rangle^{-\rho}|\xi-\eta|)^{-N},\quad (x,\xi),(y,\eta)\in\RR^{2n},
\end{multline*}
where $\Phi_{\xi}$, $\xi\in\RR^n$, is the metaplectic operator $\Phi_{\xi}\chi(t)=\langle \xi\rangle^{\rho n/2}\chi(\langle \xi\rangle^{\rho}t)$.

\section{Preliminaries}

Given a smooth finite dimensional manifold\footnote{Manifolds are always assumed to be second-countable.} $\mathcal{N}$, one can unambiguously define the $\sigma$-algebra of Lebesgue measurable sets on $\mathcal{N}$ by declaring a set $E$ to be Lebesgue measurable if $\phi(E\cap U)$ is Lebesgue measurable for each chart $(\phi,U)$. The notion of a negligible set (nullset) in $\mathcal{N}$ is unambiguous since they are diffeomorphism invariant. If $F$ is a topological space, we say that $f:\mathcal{N}\rightarrow F$ is Lebesgue measurable (from now, always abbreviated as measurable) if $f$ is measurable with respect to the Borel $\sigma$-algebra on $F$ and the Lebesgue $\sigma$-algebra on $\mathcal{N}$, while we call $f$ Borel measurable if it is measurable with respect to the Borel $\sigma$-algebras on $\mathcal{N}$ and $F$.\\
\indent Let $V$ be an $n$-dimensional real vector space with $V'$ being its dual and set $W:=V\times V'$. We will always denote the points in $W$ with capital letters $X,Y,Z,\ldots$. Throughout the article, unless otherwise stated, the \underline{only} regularity assumption we impose on any Riemannian metric will be measurability\footnote{We emphasise this since, in the literature, it is customary to impose stronger regularity assumptions like continuity or smoothness.}; i.e. a Riemannian metric $g$ on $W$ is a Lebesgue measurable section of the 2-covariant tensor bundle $T^2T^*W$ that is symmetric and positive-definite at every point. We will always denote the corresponding quadratic forms by the same symbol: $g_X(T):=g_X(T,T)$, $T\in T_XW$. As standard, for each $X\in W$, we employ the canonical identification of $W$ with $T_XW$ that sends every $Y\in W$ to the directional derivative in direction $Y$ at $X$. For $T\in W\backslash\{0\}$, we denote by $\partial_T$ the vector field on $W$ given by the directional derivative in direction $T$ at every point $X\in W$. If $\partial_T$ acts on a function of two or more variables we will always emphasises in which variable it acts by writing $\partial_{T;X}, \partial_{T;Y},\ldots$. The space $W$ is symplectic when equipped with the canonical symplectic form $[(x,\xi),(y,\eta)]=\langle \xi,y\rangle -\langle \eta,x\rangle$, where $\langle \cdot,\cdot\rangle$ is the dual pairing for $V$ and $V'$. We denote by $\sigma:W\rightarrow W'$ the isomorphism induced by the symplectic form; notice that ${}^t\sigma=-\sigma$. Let $g$ be a Riemannian metric on $W$ and, for $X\in W$, denote by $Q_X:W\rightarrow W'$ the isomorphism induced by $g_X$. For $X\in W$, set $Q^{\sigma}_X:={}^t\sigma Q_X^{-1}\sigma:W\rightarrow W'$ and let $g^{\sigma}_X(T,S):=\langle Q^{\sigma}_XT,S\rangle$, $T,S\in W$. Then $g^{\sigma}$ is again a Riemannian metric on $W$ called the symplectic dual of $g$; it is also given by $g^{\sigma}_X(T)=\sup_{S\in W\backslash\{0\}} [T,S]^2/g_X(S)$. The Riemannian metric $g$ is said to be a H\"ormander metric \cite{hormander}, i.e., an admissible metric in the terminology of \cite{bony,lernerB}, if the following three conditions are satisfied:
\begin{itemize}
\item[$(i)$] slow variation: there exist $C_0\geq 1$ and $r_0>0$ such that
    $$
    g_X(X-Y)\leq r^2_0\Rightarrow C^{-1}_0g_Y(T)\leq g_X(T)\leq C_0g_Y(T),\quad  X,Y,T\in W;
    $$
\item[$(ii)$] temperance: there exist $C_0\geq 1$ and $N_0\geq 0$ such that
    $$
    \left(g_X(T)/g_Y(T)\right)^{\pm 1}\leq C_0(1+g^{\sigma}_X(X-Y))^{N_0},\quad  X,Y,T\in W;
    $$
\item[$(iii)$] the uncertainty principle: $g_X(T)\leq g^{\sigma}_X(T),\; X,T\in W$.
\end{itemize}
The constants $C_0\geq 1$, $r_0>0$ and $N_0\geq 0$ for which all of the above conditions hold true will be called \textit{structure constants} for $g$. The H\"ormander metric $g$ is said to be symplectic if $g=g^{\sigma}$.\\
\indent A positive measurable function $M$ on $W$ is said to be $g$-admissible if there are $C\geq 1$, $r>0$ and $N\geq 0$ such that
\begin{gather*}
g_X(X-Y)\leq r^2\Rightarrow C^{-1}M(Y)\leq M(X)\leq CM(Y),\quad  X,Y\in W;\\
\left(M(X)/M(Y)\right)^{\pm1}\leq C(1+g^{\sigma}_X(X-Y))^N,\quad  X,Y\in W.
\end{gather*}
The constants $r>0$ and $N\geq 0$ for which these conditions hold true are called \textit{admissibility constants} for $M$. We will also call $r$ a \textit{slow variation constant} and $N$ a \textit{temperance constant} for $M$.\\
\indent Given a $g$-admissible weight $M$, the space of symbols $S(M,g)$ is defined as the space of all $a\in\mathcal{C}^{\infty}(W)$ for which
$$
\|a\|^{(k)}_{S(M,g)}=\sup_{l\leq k}\sup_{\substack{X\in W\\ T_1,\ldots, T_l\in W\backslash\{0\}}}\frac{|a^{(l)}(X;T_1,\ldots,T_l)|} {M(X)\prod_{j=1}^lg_X(T_j)^{1/2}}<\infty,\quad k\in\NN.
$$
With this system of seminorms, $S(M,g)$ becomes a Fr\'echet space. One can always regularise the metric making it smooth without changing the notion of $g$-admissibility of a weight and the space $S(M,g)$; the same can be done for any $g$-admissible weight (see \cite{hormander}). We point out that $S(M,g)\subseteq \mathcal{O}_{\mathcal{M}}(W)$, where $\mathcal{O}_{\mathcal{M}}(W)$ is the space of multipliers for $\SSS(W)$.\\
\indent Given $a\in\SSS(W)$, the Weyl quantisation $a^w$ is the operator
$$
a^w\varphi(x)=\int_{V'}\int_V e^{2\pi i\langle x-y,\xi\rangle}a((x+y)/2,\xi)\varphi(y)dyd\xi,\quad \varphi\in \SSS(V),
$$
where $dy$ is a left-right Haar measure on $V$ with $d\xi$ being its dual measure on $V'$ so that the Fourier inversion formula holds with the standard constants. Consequently, $a^w$ as well as the completion of the product measure $dyd\xi$ on $W$ are unambiguously defined. We denote this Haar measure by $d\lambda$ and employ it to include test functions in spaces of distributions on $W$; we will also denote it by $dX,dY,dZ,\ldots$ when it appears in integrals. We point out that $d\lambda$ is exactly the measure induced by the symplectic volume form. To avoid working with distribution densities, we fix a left-right Haar measure on $V$ with its dual measure on $V'$ and we employ these to include test functions into the spaces of distributions over $V$ and $V'$. Then, for any $a\in\SSS(W)$, $a^w$ extends to a continuous operator from $\SSS'(V)$ into $\SSS(V)$. The definition of the Weyl quantisation extends to symbols in $\SSS'(W)$ and in this case $a^w:\SSS(V)\rightarrow \SSS'(V)$ is continuous. When $a\in S(M,g)$ for a $g$-admissible weight $M$, $a^w$ is continuous as an operator on $\SSS(V)$ and it uniquely extends to a continuous operator on $\SSS'(V)$ (see \cite{hormander}). Furthermore, if $a\in S(M_1,g)$ and $b\in S(M_2,g)$, then $a^wb^w=(a\#b)^w$, where $a\# b\in S(M_1M_2,g)$ and the bilinear map $\#: S(M_1,g)\times S(M_2,g)\rightarrow S(M_1M_2,g)$ is continuous (see \cite{hormander}).\\
\indent If $F_1$ and $F_2$ are two locally convex spaces (from now, always abbreviated as l.c.s.), we denote by $\mathcal{L}(F_1,F_2)$ the space of continuous linear mappings from $F_1$ into $F_2$, while $\mathcal{L}_b(F_1,F_2)$ stands for this space equipped with the topology of uniform convergence on all bounded sets. When $F_1=F_2=F$, we abbreviate these notations and simply use $\mathcal{L}(F)$ and $\mathcal{L}_b(F),$ respectively.\\
\indent Given a Fr\'echet space $F$, recall that a map $\mathbf{f}:W\rightarrow F$ is said to be strongly measurable if there exists a sequence of Lebesgue measurable simple functions on $W$ with values in $F$ which converges pointwise a.e. to $\mathbf{f}$. The map $\mathbf{f}:W\rightarrow F$ is said to be weakly measurable if for each $f'\in F'$, the function $W\rightarrow \CC$, $X\mapsto \langle f', \mathbf{f}(X)\rangle$, is measurable.

\begin{remark}\label{rem-for-meas-strong-weak}
If $F$ is a separable Fr\'echet space, the mapping $\mathbf{f}:W\rightarrow F$ is strongly measurable if and only if it is weakly measurable. This is also equivalent to the requirement that the preimage of every open set is measurable (i.e. $\mathbf{f}$ is measurable). Furthermore, there exists a sequence of Lebesgue measurable simple functions which converges pointwise everywhere to $\mathbf{f}$. These facts follow from \cite[Theorem 1]{thomas} and the remark following it since every separable Fr\'echet space is a Polish space.
\end{remark}

\indent Given a Riemannian metric $g$ on $W$, we denote by $|g_X|$, $X\in W$, the determinant of $g_X$ computed in a symplectic basis. We point out that $|g_X|$ is the same in any symplectic basis of $W$; we will always tacitly apply this fact throughout the rest of the article. To verify it, let $E_j$, $j=1,\ldots,2n$, and $\tilde{E}_j$, $j=1,\ldots,2n$, be two symplectic bases. Let $A,\tilde{A}:W\rightarrow W'$ be the isomorphisms that send these bases to their respective dual bases and let $P:W\rightarrow W$ be the symplectic map given by $P(E_j)=\tilde{E}_j$, $j=1,\ldots,2n$. Since $\det(P)=1$ and $\det(\tilde{A}^{-1}({}^tP)^{-1}\tilde{A})=1$, the claim follows from
\begin{align*}
\det((g_X(E_j,E_k))_{j,k})&=\det(A^{-1}Q_X)=\det(P^{-1}\tilde{A}^{-1}({}^tP)^{-1}\tilde{A} \tilde{A}^{-1}Q_X)\\
&=\det(\tilde{A}^{-1}Q_X) =\det((g_X(\tilde{E}_j,\tilde{E}_k))_{j,k}).
\end{align*}
From now, we denote by $|g|$ the measurable function $W\rightarrow (0,\infty)$, $X\mapsto |g_X|$. If $g$ is a H\"ormander metric, we denote by $dv_g$ the measure on $W$ induced by the volume form of $g$. Because of temperance, $dv_g$ is a complete regular Borel measure defined on the Lebesgue $\sigma$-algebra of $W$ which takes finite values on compact sets and its nullsets are exactly the Lebesgue nullsets in $W$ (see the proof of \cite[Theorem 3.11, p. 59]{grigoryan}). Notice that $dv_g=|g|^{1/2}d\lambda$.\\
\indent Let $g$ be a H\"ormander metric on $W$ and let $w:W\times W\rightarrow (0,\infty)$ be a measurable function such that both $w$ and $1/w$ are polynomially bounded, i.e. there are $C>0$ and $\tau_1,\tau_2\geq 0$ such that
\begin{align}\label{pol-b-d-wei-s}
C^{-1}(1+|X|+|Y|)^{-\tau_1}\leq w(X,Y)\leq C(1+|X|+|Y|)^{\tau_2},\quad X,Y\in W,
\end{align}
where $|\cdot|$ is a norm on $W$; notice that if $w$ satisfies \eqref{pol-b-d-wei-s} with one norm on $W$ then it satisfies it with any other norm with the same $\tau_1$ and $\tau_2$ and possibly different $C>0$. For $1\leq p\leq \infty$, we denote by $L^p_w(W\times W,dv_gd\lambda)$ the Banach space of all measurable functions $f$ on $W\times W$ such that $fw\in L^p(W\times W,dv_gd\lambda)$; when $g$ is a Euclidian metric, we will simply denote it by $L^p_w(W\times W)$. Clearly, $\SSS(W\times W)\subseteq L^p_w(W\times W,dv_gd\lambda)\subseteq \SSS'(W\times W)$ and the inclusions are continuous; when $p<\infty$, they are also dense. We point out that $L^{\infty}_w(W\times W,dv_gd\lambda)=L^{\infty}_w(W\times W)$. If $w$ is a positive measurable function on $W$ that satisfies bounds of the form \eqref{pol-b-d-wei-s} on $W$, the weighted space $L^p_w(W)$, $1\leq p\leq \infty$, is defined analogously.

\subsection{Several technical results about H\"ormander metrics}\label{properties}

Let $g$ be a H\"ormander metric with structure constants $C_0\geq 1$, $r_0>0$ and $N_0\geq 0$. Given $X\in W$ and $r>0$, denote $U_{X,r}:=\{Y\in W\,|\, g_X(X-Y)\leq r^2\}$. Throughout the article, the functions
\begin{equation}\label{funco-for-met-meas-cont-whe-gisco}
(X,Y)\mapsto g^{\sigma}_X(Y-U_{X,r})\quad \mbox{and}\quad (X,Y)\mapsto g^{\sigma}_X(U_{Y,r}-U_{X,r})
\end{equation}
will often appear in integrals in $X$ or in $Y$. We point out that these functions are always measurable and, when $g$ is continuous, \eqref{funco-for-met-meas-cont-whe-gisco} are also continuous. The proof of these facts can be found in Appendix \ref{appendix1-proof-subwithhor-metfa}: see Lemma \ref{lemma-meas-cont-metr-obtf} and the remarks following it. Here we collect a number of inequalities which we will frequently employ throughout the rest of the article. Some of these can be found in \cite{bon-che,bon-ler,hormander,lernerB}; for the sake of completeness, we give proofs for all of them in Appendix \ref{appendix1-proof-subwithhor-metfa}.

\begin{lemma}\label{tec-res-ine-for-hor-metr-ini}
Let $0<r\leq r_0$. The following inequalities hold true:
\begin{gather}
(g_X(T)/g_Y(T))^{\pm 1}\leq C_0^{N_0+2}(1+g^{\sigma}_X(Y-U_{X,r}))^{N_0},\;\; X,Y\in W,\, T\in W\backslash\{0\};\label{ineq-for-metric-p-1}\\
1+g_Y(X-Y)\leq 2(1+r^2)C_0^{N_0+2}(1+g^{\sigma}_X(Y-U_{X,r}))^{N_0+1},\;\; X,Y\in W;\label{ineq-for-metric-p-2}\\
g^{\sigma}_Y(Y-U_{X,r})\leq C_0^{N_0+2}g^{\sigma}_X(Y-U_{X,r})(1+g^{\sigma}_X(Y-U_{X,r}))^{N_0},\;\; X,Y\in W;\label{ineq-for-metric-p-3}\\
\sup_{Y\in W}\int_W (1+g^{\sigma}_X(Y-U_{X,r}))^{-(N_0+1)(n+1)-nN_0} dv_g(X)<\infty.\label{ineq-for-metric-p-3-1}
\end{gather}
Fixing any symplectic basis on $W$ to evaluate $|g_X|$ and $|g^{\sigma}_X|$, the following hold true:
\begin{gather}
|g_X|\leq 1\leq |g^{\sigma}_X|,\quad |g_X||g^{\sigma}_X|=1,\quad X\in W;\label{ineq-for-metric-p-4}\\
g_X(X-Y)\leq r_0^2\Rightarrow (|g_X|/|g_Y|)^{\pm 1}\leq C_0^{2n}\;\; \mbox{and}\;\; (|g^{\sigma}_X|/|g^{\sigma}_Y|)^{\pm 1}\leq C_0^{2n};\label{ineq-for-metric-p-5}\\
\max\{(|g_X|/|g_Y|)^{\pm 1},(|g^{\sigma}_X|/|g^{\sigma}_Y|)^{\pm 1}\}\leq C_0^{2nN_0+4n}(1+g^{\sigma}_X(Y-U_{X,r}))^{2nN_0},\;\; X,Y\in W.\label{ineq-for-metric-p-6}
\end{gather}
Let $|\cdot|$ be a norm on $W$. There exists $C\geq 1$ such that
\begin{equation}\label{ineq-for-metric-p-7}
C^{-1}(1+|X|)^{-4nN_0}\leq |g_X|\leq |g^{\sigma}_X|\leq C(1+|X|)^{4nN_0},\quad X\in W.
\end{equation}
\end{lemma}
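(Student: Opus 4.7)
The plan is to prove the eight inequalities in order, each one building on the previous and leaning on the three defining properties of a H\"ormander metric (slow variation, temperance, uncertainty) together with one reusable observation: by the dual characterisation $g^{\sigma}_X(T)=\sup_{S\ne 0}[T,S]^2/g_X(S)$, any matrix inequality $C^{-1}g_Y\le g_X\le Cg_Y$ inverts to $C^{-1}g^{\sigma}_Y\le g^{\sigma}_X\le Cg^{\sigma}_Y$. Thus slow variation transfers verbatim from $g$ to $g^{\sigma}$, and an analogous computation on suprema dualises temperance. We exploit this whenever an estimate involves $g^{\sigma}$ instead of $g$.

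For \eqref{ineq-for-metric-p-1} we fix $Z\in U_{X,r}$, factor $g_X(T)/g_Y(T)=(g_X(T)/g_Z(T))(g_Z(T)/g_Y(T))$, bound the first factor by $C_0$ (slow variation) and the second by $C_0(1+g^{\sigma}_Z(Z-Y))^{N_0}$ (temperance), convert $g^{\sigma}_Z(Z-Y)$ to $\le C_0 g^{\sigma}_X(Z-Y)$ (dualised slow variation), and minimise over $Z$. Inequality \eqref{ineq-for-metric-p-3} is the same statement applied to $g^{\sigma}$, followed by the observation $g_X\le g^{\sigma}_X$ which enlarges the infimum set from $\{Z:g^{\sigma}_X(X-Z)\le r^2\}$ to $U_{X,r}$. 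For \eqref{ineq-for-metric-p-2} we start from
\[
g_Y(X-Y)\le 2g_Y(X-Z)+2g_Y(Z-Y),\quad Z\in U_{X,r},
\]
bound the first term via \eqref{ineq-for-metric-p-1} and $g_X(X-Z)\le r^2$, bound the second using the uncertainty principle $g_Y(Z-Y)\le g^{\sigma}_Y(Z-Y)$ together with the dual of \eqref{ineq-for-metric-p-1}, and finally use $r^2+a\le (1+r^2)(1+a)$ to absorb the two contributions into the stated form.

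The four determinant inequalities \eqref{ineq-for-metric-p-4}--\eqref{ineq-for-metric-p-7} reduce to applying $\det$ to matrix inequalities in a fixed symplectic basis. Since in such a basis $Q^{\sigma}_X=-JQ_X^{-1}J$ with $\det J=\pm 1$, we have $|g_X||g^{\sigma}_X|=1$; combined with the matrix form of the uncertainty principle $g_X\le g^{\sigma}_X$ this yields $|g_X|\le 1\le |g^{\sigma}_X|$, i.e.~\eqref{ineq-for-metric-p-4}. Taking determinants in slow variation and in \eqref{ineq-for-metric-p-1} gives \eqref{ineq-for-metric-p-5} and \eqref{ineq-for-metric-p-6} with the exponents raised to $2n$; the $g^{\sigma}$ versions of these follow immediately from $|g^{\sigma}_X|/|g^{\sigma}_Y|=|g_Y|/|g_X|$. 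Finally, for \eqref{ineq-for-metric-p-7} we fix a base point $X_0$, apply temperance in the swapped form $(g_X/g_{X_0})^{\pm 1}\le C_0(1+g^{\sigma}_{X_0}(X-X_0))^{N_0}$ (obtained by relabelling $X\leftrightarrow Y$ in the temperance axiom), observe that $g^{\sigma}_{X_0}(X-X_0)\le C(1+|X|)^2$ since $g^{\sigma}_{X_0}$ is a fixed quadratic form on $W$, dualise to obtain $g^{\sigma}_X(T)\le C(1+|X|)^{2N_0}g^{\sigma}_{X_0}(T)$ and its reciprocal, and conclude by taking $2n$-th power determinants.

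The main obstacle is the integrability estimate \eqref{ineq-for-metric-p-3-1}. We employ a Bony-type partition: there is a countable family $\{X_j\}\subset W$ such that $W=\bigcup_j U_{X_j,r}$ with uniformly finite overlap, and on each $U_{X_j,r}$ the form $g$ is equivalent to $g_{X_j}$ by slow variation. A crucial observation is that $dv_g(U_{X_j,r})$ is uniformly bounded in $j$, because the $d\lambda$-volume of the ellipsoid $U_{X_j,r}$ equals a dimensional constant times $r^{2n}|g_{X_j}|^{-1/2}$ while $dv_g=|g|^{1/2}d\lambda$, so the determinantal factors cancel. Using \eqref{ineq-for-metric-p-1}--\eqref{ineq-for-metric-p-3} to replace $g^{\sigma}_X(Y-U_{X,r})$ on each piece by a comparable quantity involving $g^{\sigma}_{X_j}(Y-U_{X_j,r})$, the integral is dominated by a discrete series of the form $\sum_j(1+g^{\sigma}_{X_j}(Y-U_{X_j,r}))^{-M'}$. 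Summability is then verified by partitioning the indices into shells $\{j:2^{k-1}\le 1+g^{\sigma}_{X_j}(Y-U_{X_j,r})<2^k\}$ and bounding the cardinality of each shell polynomially in $2^k$ via temperance combined with the Bony packing volume estimate. The exponent $M=(N_0+1)(n+1)+nN_0$ has been chosen precisely to absorb both the dimensional cardinality growth in $2^k$ and the temperance-type distortion produced by \eqref{ineq-for-metric-p-1}--\eqref{ineq-for-metric-p-3} and \eqref{ineq-for-metric-p-6} when transferring between $g^{\sigma}_X$ and $g^{\sigma}_{X_j}$; balancing these exponents correctly is the principal technical step.
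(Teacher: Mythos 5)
The inequalities \eqref{ineq-for-metric-p-1}--\eqref{ineq-for-metric-p-3} and \eqref{ineq-for-metric-p-4}--\eqref{ineq-for-metric-p-7} are handled in essentially the same way as the paper: you fix a reference point in $U_{X,r}$, split using slow variation and temperance, dualise slow variation via the supremum formula for $g^{\sigma}$, and pass to determinants via the monotonicity of $\det$ on positive-definite matrices. Your derivation of $|g_X||g^{\sigma}_X|=1$ directly from $Q^{\sigma}_X=-JQ_X^{-1}J$ in a symplectic basis is a small computational variant of the paper's appeal to simultaneous symplectic diagonalisation, but both are correct.

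The genuine gap is \eqref{ineq-for-metric-p-3-1}. You invoke a Bony-type partition of $W$ by countably many $g$-balls $U_{X_j,r}$ with uniformly finite overlap, dominate the integral by a discrete sum, sort the indices into dyadic shells in the $g^{\sigma}_{X_j}$-distance to $Y$, and promise that packing plus temperance bounds the shell cardinalities polynomially in $2^k$ so that the exponent $(N_0+1)(n+1)+nN_0$ absorbs everything. But you explicitly defer the verification ("balancing these exponents correctly is the principal technical step"), and this is precisely the content of the statement: neither the existence of a partition with the claimed finite-overlap and uniform $dv_g$-volume properties, nor the polynomial shell count, nor the arithmetic showing the chosen exponent wins, is actually carried out. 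This machinery is also much heavier than needed. The paper's argument is a two-line pointwise estimate: by \eqref{ineq-for-metric-p-2} the factor $(1+g^{\sigma}_X(Y-U_{X,r}))^{-(N_0+1)(n+1)}$ is dominated by $C(1+g_Y(X-Y))^{-n-1}$, and by \eqref{ineq-for-metric-p-6} the remaining factor $(1+g^{\sigma}_X(Y-U_{X,r}))^{-nN_0}\,|g_X|^{1/2}$ is dominated by $C|g_Y|^{1/2}$; the resulting integrand $C(1+g_Y(X-Y))^{-n-1}|g_Y|^{1/2}$ integrates in $X$ to a constant independent of $Y$ after a linear change of variables that diagonalises $g_Y$. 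You already have both \eqref{ineq-for-metric-p-2} and \eqref{ineq-for-metric-p-6} in hand before you reach \eqref{ineq-for-metric-p-3-1}, so this shortcut is available to you and would close the gap without any covering lemma.
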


\begin{remark}\label{equ-for-sym-metr-meas-lebesmes}
When $g$ is symplectic, \eqref{ineq-for-metric-p-4} implies $|g_X|=1$, $X\in W$, and hence $dv_g=d\lambda$.
\end{remark}

\section{A generalisation of the short-time Fourier transform}\label{C-family}

In this section, we introduce the main tool that we need for the proof of the main result: a geometric version of the symplectic short-time Fourier transform.\\
\indent From now on, $g$ is a fixed H\"ormander metric on $W$ with structure constants $C_0\geq 1$, $r_0>0$ and $N_0\geq 0$; as before, for each $X\in W$, $Q_X,Q^{\sigma}_X:W\rightarrow W'$ are the isomorphisms induced by $g_X$ and $g^{\sigma}_X$.

\subsection{Essentially uniformly confined families of symbols}

We introduce the key ingredient for the geometric generalisation of the symplectic short-time Fourier transform. It consists of families of Schwartz functions which ``keep track of the growth of the metric''; they are going to be the analogue of the windows in the classical STFT.\\
\indent We start with two technical results. Their proofs are straightforward and we omit them (cf. Remark \ref{rem-for-meas-strong-weak}).

\begin{lemma}\label{lemma-regularity-sta}
Let $A:W\rightarrow \operatorname{GL}(W)$ and $f:W\rightarrow W$ be measurable and let $\psi\in\SSS(W)$. For each $X\in W$, define $\tilde{\psi}_X(Y):=\psi(A(X)Y+f(X))$, $Y\in W$. Then $W\rightarrow \SSS(W)$, $X\mapsto \tilde{\psi}_X$, is well-defined and strongly measurable.
\end{lemma}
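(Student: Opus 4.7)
The plan is to split the statement into pointwise well-definedness and strong measurability. For well-definedness, fix $X\in W$: since $A(X)\in \operatorname{GL}(W)$, the map $Y\mapsto A(X)Y+f(X)$ is a smooth diffeomorphism of $W$, and composition of $\psi\in\SSS(W)$ with any affine bijection is again Schwartz, via the routine chain-rule computation together with the polynomial estimate
$$
(1+|Y|)^k\leq (1+\|A(X)^{-1}\|(|A(X)Y+f(X)|+|f(X)|))^k,
$$
which trades the weight on $Y$ for one on $A(X)Y+f(X)$ at the cost of constants depending on $X$. All Schwartz seminorms of $\tilde{\psi}_X$ are therefore finite.

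For strong measurability, I would exploit that $\SSS(W)$ is a separable Fr\'echet space, so that by Remark~\ref{rem-for-meas-strong-weak} it suffices to show that $X\mapsto \tilde{\psi}_X$ is Lebesgue-measurable into $\SSS(W)$ in the ordinary sense (preimages of open sets are measurable). To this end, factor it as
$$
W\;\xrightarrow{X\mapsto (A(X),f(X))}\;\operatorname{GL}(W)\times W\;\xrightarrow{\Phi}\;\SSS(W),\qquad \Phi(B,v):=\psi(B\,\cdot\,+v).
$$
The first arrow is measurable by hypothesis (with its natural Borel structure on $\operatorname{GL}(W)$), so the whole problem reduces to the continuity of $\Phi$, since the composition of a Lebesgue-measurable map with a continuous one is Lebesgue-measurable.

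The main step, and the only one requiring any real work, is the continuity of $\Phi$. Fix $(B_0,v_0)$ and consider $(B,v)$ close to it. By the chain rule, $\partial^\alpha \Phi(B,v)(Y)$ is a polynomial in the entries of $B$ times $(\partial^\beta\psi)(BY+v)$ summed over $|\beta|\leq|\alpha|$; writing the difference $\Phi(B,v)-\Phi(B_0,v_0)$ via the fundamental theorem of calculus along the line segment between the two parameter points, all the $Y$-decay comes from the rapid decay of the derivatives of $\psi$, while the fact that $B^{\pm 1}$ stays uniformly bounded near $B_0$ lets one replace weights $(1+|Y|)^k$ by $(1+|BY+v|)^k$ uniformly. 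The only mild obstacle is the bookkeeping of polynomial weights under differentiation in $Y$; once that is done, dominated convergence (or a direct uniform estimate) yields $\Phi(B,v)\to \Phi(B_0,v_0)$ in every Schwartz seminorm, completing the proof.
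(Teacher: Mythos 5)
Your proof is correct. The paper omits this proof, stating only that it is straightforward (citing Remark \ref{rem-for-meas-strong-weak}); your argument — reduce strong measurability to ordinary measurability via separability of $\SSS(W)$, factor the map through $\operatorname{GL}(W)\times W$, and verify continuity of $(B,v)\mapsto\psi(B\,\cdot\,+v)$ into $\SSS(W)$ using the uniform boundedness of $B^{-1}$ near $B_0$ to transfer the polynomial weight — is exactly the natural way to fill in that omission, and each step holds up.
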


\begin{lemma}\label{lemma-regularity-sta1}
Let $\tilde{g}$ be a Riemannian metric on $W$ and let $\chi_0\in\SSS(\RR)$. For each $X\in W$, define $\tilde{\psi}_X(Y):=\chi_0(\tilde{g}_X(X-Y,X-Y))$, $Y\in W$. Then $W\rightarrow \SSS(W)$, $X\mapsto \tilde{\psi}_X$, is well-defined and strongly measurable.
\end{lemma}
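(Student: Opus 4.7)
The plan is to reduce to Lemma \ref{lemma-regularity-sta} via a measurable ``square-root'' of the metric. Well-definedness is routine: for each fixed $X$, the map $Y\mapsto \tilde g_X(X-Y,X-Y)$ is a positive-definite quadratic polynomial in $Y$, so it and all its $Y$-derivatives are polynomial, while $\chi_0\in\SSS(\RR)$ forces the composition to decay faster than any inverse power of $|Y|$. This gives $\tilde\psi_X\in\SSS(W)$ by a standard Fa\`a di Bruno type verification.

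For strong measurability, I would first fix a basis of $W$ (identifying $W\cong\RR^{2n}$) and let $M(X)$ be the matrix of $\tilde g_X$ in this basis, so $X\mapsto M(X)$ is measurable with values in the open cone $\mathcal P$ of positive-definite symmetric $2n\times 2n$ matrices. Let $B(X):=M(X)^{1/2}$ be the unique positive-definite square root; then
$$
\tilde g_X(T,T)=|B(X)T|^2,\qquad T\in W,
$$
where $|\cdot|$ denotes the Euclidean norm in the chosen coordinates.

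Next I set $\psi(Z):=\chi_0(|Z|^2)$, which lies in $\SSS(W)$ because $|Z|^2$ is polynomial and $\chi_0$ is rapidly decreasing. Defining $A(X):=-B(X)$ and $f(X):=B(X)X$, one directly checks
$$
\tilde\psi_X(Y)=\chi_0\bigl(|B(X)(X-Y)|^2\bigr)=\psi\bigl(A(X)Y+f(X)\bigr),
$$
which is exactly the shape treated by Lemma \ref{lemma-regularity-sta}. Once measurability of $A$ and $f$ is known, the conclusion follows immediately.

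The only point that is not purely formal is the measurability of $X\mapsto B(X)$, which is the main (and essentially only) obstacle. It follows from the fact that the positive square-root map $\mathcal P\to\mathcal P$, $M\mapsto M^{1/2}$, is continuous (in fact real analytic, via the holomorphic functional calculus or spectral decomposition), so that $B(X)$ is a composition of a measurable map with a continuous one and is therefore measurable with values in $\mathrm{GL}(W)$; measurability of $f(X)=B(X)X$ is then automatic. With this in hand, the rest is a one-line appeal to Lemma \ref{lemma-regularity-sta}.
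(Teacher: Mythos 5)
Your argument is correct. Note that the paper does not actually record a proof of Lemma~\ref{lemma-regularity-sta1} (nor of Lemma~\ref{lemma-regularity-sta}); it declares them straightforward and points to Remark~\ref{rem-for-meas-strong-weak}, which suggests the intended route is to verify weak (equivalently, strong) measurability directly by checking that $(M,X)\mapsto\chi_0(\langle M(\cdot-X),\cdot-X\rangle)$ is continuous from the cone of positive-definite symmetric matrices times $W$ into $\SSS(W)$ and then composing with the measurable map $X\mapsto(M(X),X)$. Your proof instead folds Lemma~\ref{lemma-regularity-sta1} into Lemma~\ref{lemma-regularity-sta}: writing $\tilde g_X(T,T)=|B(X)T|^2$ with $B(X)=M(X)^{1/2}$, putting $\psi(Z)=\chi_0(|Z|^2)$ (which is indeed Schwartz, since each derivative is a finite sum of $\chi_0^{(k)}(|Z|^2)$ times polynomials), and observing $\tilde\psi_X=\psi(A(X)\cdot+f(X))$ with $A(X)=-B(X)\in\operatorname{GL}(W)$ and $f(X)=B(X)X$. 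The only nontrivial ingredient is the measurability of $X\mapsto B(X)$, which you correctly obtain from the continuity (indeed real-analyticity) of the square root on the open cone of positive-definite matrices. This is a legitimate alternative route: it buys a unification of the two lemmas and avoids re-deriving the continuity of a parametrised Schwartz-valued map from scratch, at the mild cost of introducing the matrix square root; both arguments establish the claim.
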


Let $X\in W$ and $r>0$ be arbitrary but fixed. Following Bony and Lerner \cite[Definition 2.1.1]{bon-ler} (cf. \cite[Definition 2.3.1, p. 84]{lernerB}), we say that $\varphi\in\mathcal{C}^{\infty}(W)$ is $g_X$-confined in $U_{X,r}$ if\footnote{Here and throughout the rest of the article we employ the principle of vacuous (empty) product for numbers and operators, i.e. $\prod_{j=1}^0r_j= \prod_{j\in\emptyset}r_j=1$ and $\prod_{j=1}^0A_j= \prod_{j\in\emptyset}A_j=\operatorname{Id}$.}
$$
\|\varphi\|^{(k)}_{g_X,U_{X,r}}:=\sup_{l\leq k} \sup_{\substack{Y\in W\\ T_1,\ldots,T_l\in W\backslash\{0\}}} \frac{|\varphi^{(l)}(Y;T_1,\ldots,T_l)| (1+g^{\sigma}_X(Y-U_{X,r}))^{k/2}}{\prod_{j=1}^l g_X(T_j)^{1/2}}<\infty,\; k\in\NN.
$$
For fixed $X$ and $r$, the space of $g_X$-confined symbols in $U_{X,r}$ coincides with $\SSS(W)$ and the above norms generate the topology of $\SSS(W)$. The authors of \cite{bon-che,bon-ler,lernerB} considered families of functions $\{\varphi_X\in\SSS(W)\, |\, X\in W\}$ which are uniformly $g_X$-confined in $U_{X,r}$, i.e. $\sup_{X\in W} \|\varphi_X\|^{(k)}_{g_X,U_{X,r}}<\infty$, $k\in\NN$, since these are a convenient tool for studying the symbol classes $S(M,g)$ and the corresponding pseudo-differential operators (see also \cite{pil-pra1}). However, we will be interested in such families which are essentially uniformly confined.

\begin{definition}
For $r\in(0,r_0]$, we denote by $\Conf_g(W;r)$ the vector space of all equivalence classes of strongly measurable maps $\boldsymbol{\varphi}:W\rightarrow \SSS(W)$ which satisfy the following condition:
\begin{equation}\label{family-seminorms-confsym}
\|\boldsymbol{\varphi}\|^{(k)}_{g,r}:=\esssup_{X\in W}\|\boldsymbol{\varphi}(X)\|^{(k)}_{g_X,U_{X,r}}<\infty,\quad k\in\NN.
\end{equation}
\end{definition}

The right-hand side of \eqref{family-seminorms-confsym} makes sense since the function $W\rightarrow [0,\infty)$, $X\mapsto \|\boldsymbol{\varphi}(X)\|^{(k)}_{g_X,U_{X,r}}$, is measurable. To see this, set $\varphi_X:=\boldsymbol{\varphi}(X)\in \SSS(W)$, $X\in W$, and let $l,k\in\NN$ with $l\leq k$. The required measurability follows from the fact that the function
\begin{equation}\label{fun-in-norm-for-conf-sym}
W\times W\times (W\backslash\{0\})^l\rightarrow\CC,\; (X,Y,T_1,\ldots,T_l)\mapsto \frac{\varphi_X^{(l)}(Y;T_1,\ldots,T_l) (1+g^{\sigma}_X(Y-U_{X,r}))^{k/2}}{\prod_{j=1}^l g_X(T_j)^{1/2}},
\end{equation}
is measurable (see Remark \ref{rem-for-measura-cont-when-met-ssh}) and, for each fixed $X\in W$, this is a continuous function of $(Y,T_1,\ldots, T_l)\in W\times (W\backslash\{0\})^l$ (so the supremum over $W\times (W\backslash\{0\})^l$ in the definition of $\|\boldsymbol{\varphi}(X)\|^{(k)}_{g_X,U_{X,r}}$ can be replaced with essential supremum).

\begin{remark}\label{remark-for-equi-for-strong-weak-borel-meas-for-s}
In view of Remark \ref{rem-for-meas-strong-weak}, $\boldsymbol{\varphi}:W\rightarrow \SSS(W)$ is strongly measurable if and only if it is weakly measurable. We will frequently tacitly apply this fact throughout the rest of the article.
\end{remark}

It is straightforward to verify that with the system of seminorms \eqref{family-seminorms-confsym}, $\Conf_g(W;r)$ becomes a Fr\'echet space. We say that $\boldsymbol{\varphi}\in \Conf_g(W;r)$ is of class $\mathcal{C}^k$, $0\leq k\leq\infty$, if $\boldsymbol{\varphi}\in\mathcal{C}^k(W;\SSS(W))$; when $k=0$ and $k=\infty$ we also say that $\boldsymbol{\varphi}$ is continuous and smooth, respectively.

\begin{remark}
If the metric $g$ is continuous and $\boldsymbol{\varphi}\in\Conf_g(W;r)$ is continuous, then \eqref{fun-in-norm-for-conf-sym} is also continuous (see Remark \ref{rem-for-measura-cont-when-met-ssh}). Hence, the essential supremum in \eqref{family-seminorms-confsym} can be replaced by supremum.
\end{remark}

The following class of elements in $\Conf_g(W;r)$ will play an important role throughout the rest of the article.

\begin{definition}
We say that $\boldsymbol{\varphi}\in\Conf_g(W;r)$ is \textit{non-degenerate} if
\begin{equation}\label{non-degen-ele-cond-int-cottt}
\inf_{Y\in W}\int_W |\varphi_X(Y)|^2dv_g(X)>0,\quad \mbox{where}\;\; \varphi_X:=\boldsymbol{\varphi}(X),\; X\in W.
\end{equation}
\end{definition}

We collect the properties of the elements of $\Conf_g(W;r)$ that we need in the following lemma.

\begin{lemma}\label{rem-about-part-of-unity}
${}$
\begin{itemize}
\item[$(i)$] Let $0<r_1\leq r_2\leq r_0$. Then $\Conf_g(W;r_1)\subseteq \Conf_g(W;r_2)$ and the inclusion is continuous.
\item [$(ii)$] Let $\boldsymbol{\varphi},\boldsymbol{\psi}\in\Conf_g(W;r)$ and $a\in S(1,g)$. For each $X\in W$, set
$$
(\boldsymbol{\varphi}\boldsymbol{\psi})(X):=\boldsymbol{\varphi}(X)\boldsymbol{\psi}(X), \quad (a\boldsymbol{\varphi})(X):=a\boldsymbol{\varphi}(X),\quad \overline{\boldsymbol{\varphi}}(X):=\overline{\boldsymbol{\varphi}(X)}.
$$
Then $\boldsymbol{\varphi}\boldsymbol{\psi}, a\boldsymbol{\varphi},\overline{\boldsymbol{\varphi}}\in\Conf_g(W;r)$. With the multiplications defined above, $\Conf_g(W;r)$ is a Fr\'echet algebra and a topological module over the Fr\'echet algebra $S(1,g)$ where the latter has the ordinary pointwise multiplication\footnote{We emphasis this to avoid confusion since $S(1,g)$ is also a Fr\'echet algebra with multiplication given by the sharp product $\#$.}.
\item[$(iii)$] For each $\boldsymbol{\varphi}\in\Conf_g(W;r)$, set
\begin{equation}\label{func-for-com-famil-bou}
I_{\boldsymbol{\varphi}}(Y):=\int_W \varphi_X(Y) dv_g(X),\;\; Y\in W,\quad \mbox{with}\;\; \varphi_X:=\boldsymbol{\varphi}(X),\; X\in W.
\end{equation}
Then the mapping $\Conf_g(W;r)\rightarrow S(1,g)$, $\boldsymbol{\varphi}\mapsto I_{\boldsymbol{\varphi}}$, is well-defined and continuous. If $\boldsymbol{\varphi}$ is non-degenerate, then $1/I_{|\boldsymbol{\varphi}|^2}\in S(1,g)$, where $|\boldsymbol{\varphi}|^2:=\boldsymbol{\varphi}\overline{\boldsymbol{\varphi}}$.
\end{itemize}
\end{lemma}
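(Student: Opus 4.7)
The plan is to dispatch (i) by monotonicity of the family $U_{X,r}$ in $r$, (ii) by a Leibniz expansion coupled with the conversion of $g_Y$-bounds on derivatives of $a \in S(1,g)$ into $g_X$-bounds via the metric inequality \eqref{ineq-for-metric-p-1}, and (iii) by differentiation under the integral sign controlled by the confinement majorants together with the integrability inequality \eqref{ineq-for-metric-p-3-1}. Strong measurability of all the constructions will be obtained by composing weakly measurable maps $W \to \SSS(W)$ with continuous (linear or bilinear) operations on the separable Fr\'echet space $\SSS(W)$, invoking Remark \ref{remark-for-equi-for-strong-weak-borel-meas-for-s}; here I use that multiplication $\SSS(W) \times \SSS(W) \to \SSS(W)$, complex conjugation, and multiplication by any fixed $a \in S(1,g) \subseteq \OO_{\MM}(W)$ are continuous. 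Part (i) is then immediate: if $r_1 \leq r_2$, then $U_{X,r_1} \subseteq U_{X,r_2}$, hence $g^\sigma_X(Y - U_{X,r_2}) \leq g^\sigma_X(Y - U_{X,r_1})$, so $\|\varphi\|^{(k)}_{g_X, U_{X,r_2}} \leq \|\varphi\|^{(k)}_{g_X, U_{X,r_1}}$ for every $\varphi \in \SSS(W)$ and every $X \in W$, and taking the essential supremum yields the continuous inclusion.

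For part (ii), I would expand
$$(\varphi_X \psi_X)^{(l)}(Y; T_1, \ldots, T_l) = \sum_{I \sqcup J = \{1, \ldots, l\}} \varphi_X^{(|I|)}(Y; T_I)\, \psi_X^{(|J|)}(Y; T_J),$$
bound each factor by the $k$-th confinement seminorm of $\boldsymbol{\varphi}$ and $\boldsymbol{\psi}$ respectively, and observe that the product of two confinement weights $(1 + g^\sigma_X(Y - U_{X,r}))^{-k/2}$ is more than enough to absorb the target weight $(1 + g^\sigma_X(Y - U_{X,r}))^{k/2}$ built into $\|\cdot\|^{(k)}_{g_X, U_{X,r}}$. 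This produces an estimate of the form $\|\boldsymbol{\varphi}\boldsymbol{\psi}\|^{(k)}_{g,r} \leq 2^k \|\boldsymbol{\varphi}\|^{(k)}_{g,r} \|\boldsymbol{\psi}\|^{(k)}_{g,r}$; the case of $\overline{\boldsymbol{\varphi}}$ is trivial. For the module action, the same Leibniz expansion is used, but the $S(1,g)$-bound $|a^{(|I|)}(Y; T_I)| \leq \|a\|^{(k)}_{S(1,g)} \prod_{i \in I} g_Y(T_i)^{1/2}$ is written in $g_Y$-norms; converting each $g_Y(T_i)^{1/2}$ to $g_X(T_i)^{1/2}$ via \eqref{ineq-for-metric-p-1} introduces a factor $(1+g^\sigma_X(Y - U_{X,r}))^{N_0 |I|/2}$, which is compensated by bounding the $\boldsymbol{\varphi}$-factor using the $k(1+N_0)$-th confinement seminorm. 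This yields $\|a\boldsymbol{\varphi}\|^{(k)}_{g,r} \leq C_k \|a\|^{(k)}_{S(1,g)} \|\boldsymbol{\varphi}\|^{(k(1+N_0))}_{g,r}$ and establishes joint continuity of both the algebra product and the $S(1,g)$-module action.

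For part (iii), measurability of $X \mapsto \varphi_X(Y) = \langle \delta_Y, \varphi_X\rangle$ follows from weak measurability of $\boldsymbol{\varphi}$, and the confinement bound $|\varphi_X(Y)| \leq \|\boldsymbol{\varphi}\|^{(k)}_{g,r}(1+g^\sigma_X(Y - U_{X,r}))^{-k/2}$ combined with \eqref{ineq-for-metric-p-3-1} shows, for $k$ large, that $I_{\boldsymbol{\varphi}}(Y)$ converges absolutely with a uniform-in-$Y$ bound. Smoothness and commutation with derivatives are justified by locally uniform majorants of the same type (dominated convergence), giving
$$I_{\boldsymbol{\varphi}}^{(l)}(Y; T_1, \ldots, T_l) = \int_W \varphi_X^{(l)}(Y; T_1, \ldots, T_l)\, dv_g(X).$$
Applying confinement, converting each $g_X(T_j)^{1/2}$ into $g_Y(T_j)^{1/2}$ via \eqref{ineq-for-metric-p-1} at the cost of an extra factor $(1+g^\sigma_X(Y - U_{X,r}))^{lN_0/2}$, and then integrating against $dv_g(X)$ using \eqref{ineq-for-metric-p-3-1} with $k = k_l$ chosen so that $k_l/2 - lN_0/2 \geq (N_0+1)(n+1) + nN_0$ produces
$$|I_{\boldsymbol{\varphi}}^{(l)}(Y; T_1, \ldots, T_l)| \leq C_l \|\boldsymbol{\varphi}\|^{(k_l)}_{g,r} \prod_{j=1}^l g_Y(T_j)^{1/2}.$$
This shows $I_{\boldsymbol{\varphi}} \in S(1,g)$ with continuous dependence on $\boldsymbol{\varphi}$. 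For the non-degenerate case, $I_{|\boldsymbol{\varphi}|^2} \geq c > 0$ by hypothesis, and $S(1,g)$ is closed under taking the reciprocal of a strictly positive function bounded below by a positive constant (proved by induction on $l$, since $(1/f)^{(l)}$ is a polynomial in $f^{-1}$ and in $f^{(1)}, \ldots, f^{(l)}$, the latter of which satisfy the $S(1,g)$-bounds), so $1/I_{|\boldsymbol{\varphi}|^2} \in S(1,g)$. I expect the main technical point to be the bookkeeping in part (iii): matching the temperance loss $(1+g^\sigma_X(Y - U_{X,r}))^{lN_0/2}$ produced by each differentiation against the integrability rate \eqref{ineq-for-metric-p-3-1} by raising the seminorm order $k_l$ accordingly, and verifying that the same majorants are locally uniform in $Y$ to legitimize the interchange of differentiation and integration.
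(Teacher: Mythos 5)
Your proposal is correct and follows essentially the same route the paper indicates: it says the argument is straightforward, pointing to \eqref{ineq-for-metric-p-1} for the $S(1,g)$-module estimate and to \eqref{ineq-for-metric-p-1} together with \eqref{ineq-for-metric-p-3-1} for $(iii)$, and cites H\"ormander's Lemma 2.4 for $1/I_{|\boldsymbol{\varphi}|^2}\in S(1,g)$. You fill in precisely these details (the Leibniz expansion, the conversion between $g_X$- and $g_Y$-norms with the matching loss of confinement decay, and the absolute-convergence/integrability bookkeeping), and merely prove directly the reciprocal stability in $S(1,g)$ where the paper cites H\"ormander; the substance is the same.
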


\begin{proof} The proof of $(i)$ and $(ii)$ is straightforward; for the proof concerning $a\boldsymbol{\varphi}$ one employs \eqref{ineq-for-metric-p-1}. The fact that $\Conf_g(W;r)\rightarrow S(1,g)$, $\boldsymbol{\varphi}\mapsto I_{\boldsymbol{\varphi}}$, is well-defined and continuous follows from \eqref{ineq-for-metric-p-1} and \eqref{ineq-for-metric-p-3-1}. When $\boldsymbol{\varphi}$ is non-degenerate, \cite[Lemma 2.4]{hormander} implies that $1/I_{|\boldsymbol{\varphi}|^2}\in S(1,g)$.
\end{proof}

\begin{example}\label{exi-of-good-par-off}
We give three examples of non-degenerate elements of $\Conf_g(W;r)$ which we are going to use on several occasions throughout the rest of the article.\\
\\
\noindent $(i)$ The following construction is due to Bony and Lerner \cite[Theorem 3.1.3]{bon-ler} (see also \cite[Theorem 2.2.7, p. 70]{lernerB}); we outline the main ideas and refer to the proofs of \cite[Theorem 2.2.7, p. 70]{lernerB} and \cite[Theorem 3.1.3]{bon-ler} for the details. Pick non-negative and non-increasing $\chi_0\in\mathcal{C}^{\infty}([0,\infty))$ such that $\chi_0=1$ on $[0,1/2]$ and $\supp\chi_0\subseteq [0,1]$. Let $0<r\leq r_0$ and set $\tilde{\psi}_X:=\chi_0(r^{-2}g_X(X-\cdot))$, $X\in W$. Then $X\mapsto \tilde{\boldsymbol{\psi}}(X):=\tilde{\psi}_X$ belongs to $\Conf_g(W;r)$ (cf. Lemma \ref{lemma-regularity-sta1}), $\inf_{Y\in W} I_{\tilde{\boldsymbol{\psi}}}(Y)>0$ and $1/I_{\tilde{\boldsymbol{\psi}}}\in S(1,g)$; see the proof of \cite[Theorem 2.2.7, p. 70]{lernerB}. Define $\boldsymbol{\psi}:=\tilde{\boldsymbol{\psi}}/I_{\tilde{\boldsymbol{\psi}}}$. Then $\supp\boldsymbol{\psi}(X)\subseteq U_{X,r}$, for all $X\in W$, and $\boldsymbol{\psi}$ is non-degenerate. The latter holds true since, for each $X\in W$, $\psi_X:=\boldsymbol{\psi}(X)$ satisfies the bounds
$$
1/\|I_{\tilde{\boldsymbol{\psi}}}\|_{L^{\infty}(W)}\leq \psi_X(Y)\leq \|1/I_{\tilde{\boldsymbol{\psi}}}\|_{L^{\infty}(W)},\quad Y\in U_{X,r/2}.
$$
Furthermore, $I_{\boldsymbol{\psi}}(Y)=\int_W \psi_X(Y) dv_g(X)=1$, $Y\in W$.\\
\\
\noindent $(ii)$ We slightly modify the above construction so that the resulting element of $\Conf_g(W;r)$ is smooth. Let $0<r\leq r_0$. First we regularise the metric with the help of $\boldsymbol{\psi}$ constructed above for any fixed $r'\in(0,r_0]$. Set $\psi_X:=\boldsymbol{\psi}(X)$, $X\in W$, and define
$$
\tilde{\tilde{g}}_X(T,S):=\int_W g_Y(T,S)\psi_Y(X)dv_g(Y),\quad  X,T,S\in W.
$$
Then $\tilde{\tilde{g}}$ is a smooth Riemannian metric on $W$ and, setting
$\tilde{\tilde{g}}_X(T):=\tilde{\tilde{g}}_X(T,T),$ there exists $C'\geq 1$ such that $C'^{-1}\tilde{\tilde{g}}_X(T)\leq g_X(T)\leq C'\tilde{\tilde{g}}_X(T)$, $X,T\in W$;  see \cite[Remark 2.2.8, p. 71]{lernerB}. Let $\tilde{g}_X(T,S):=C'^{-1}\tilde{\tilde{g}}_X(T,S)$, $X,T,S\in W$. Then $\tilde{g}$ is a smooth H\"ormander metric that satisfies $\tilde{g}_X\leq g_X\leq C'^2\tilde{g}_X$. Notice that the constant $N_0$ for $g$ is the same as the corresponding constant for $\tilde{g}$ and, for the constant $\tilde{r}_0$ from the slow variation of $\tilde{g}$, we can take $\tilde{r}_0:=r_0/C'$. Let $\chi_0$ be as in $(i)$ and set $\tilde{\varphi}_X:=\chi_0(C'^2r^{-2}\tilde{g}_X(X-\cdot))$, $X\in W$. Then, similarly as above, $X\mapsto \tilde{\boldsymbol{\varphi}}(X):=\tilde{\varphi}_X$ belongs to $\Conf_g(W;r)\cap \mathcal{C}^{\infty}(W;\DD(W))$, $\inf_{Y\in W} I_{\tilde{\boldsymbol{\varphi}}}(Y)>0$ and $1/I_{\tilde{\boldsymbol{\varphi}}}\in S(1,g)$. Define $\boldsymbol{\varphi}:=\tilde{\boldsymbol{\varphi}}/I_{\tilde{\boldsymbol{\varphi}}}$. Then $\boldsymbol{\varphi}$ satisfies the following properties:
\begin{itemize}
\item[$(a)$] $\boldsymbol{\varphi}(X)$ is non-negative and $\supp\boldsymbol{\varphi}(X)\subseteq U_{X,r}$,  $X\in W$;
\item[$(b)$] $\boldsymbol{\varphi}\in\Conf_g(W;r)\cap \mathcal{C}^{\infty}(W;\DD(W))$ and $\boldsymbol{\varphi}$ is non-degenerate;
\item[$(c)$] setting $\varphi_X:=\boldsymbol{\varphi}(X)$, $X\in W$, it holds that $I_{\boldsymbol{\varphi}}(Y)=\int_W \varphi_X(Y) dv_g(X)=1$, $Y\in W$;
\item[$(d)$] for all $k,l\in\NN$ and $N>0$ it holds that
\begin{equation*}
\sup_{\substack{k'\leq k\\ l'\leq l}} \sup_{X,Y\in W} \sup_{\substack{T_1,\ldots, T_{k'}\in W\backslash\{0\}\\ S_1,\ldots, S_{l'}\in W\backslash\{0\}}} \frac{\left|\left(\prod_{j=1}^{k'}\partial_{T_j;X}\right) \left(\prod_{j=1}^{l'}\partial_{S_j;Y}\right) \varphi_X(Y)\right|(1+g^{\sigma}_X(Y-U_{X,r}))^N}{\left(\prod_{j=1}^{k'} g_X(T_j)^{1/2}\right)\left(\prod_{j=1}^{l'} g_X(S_j)^{1/2}\right)}<\infty.
\end{equation*}
\end{itemize}
The fact that $\boldsymbol{\varphi}$ is non-degenerate follows from the bounds
\begin{equation}\label{est-from-belo-and-above-part-unit}
1/\|I_{\tilde{\boldsymbol{\varphi}}}\|_{L^{\infty}(W)}\leq \varphi_X(Y)\leq \|1/I_{\tilde{\boldsymbol{\varphi}}}\|_{L^{\infty}(W)},\quad Y\in U_{X,r/(2C')},
\end{equation}
and the proof of the rest of the properties in $(a)$, $(b)$ and $(c)$ is straightforward. Since the proof of $(d)$ is rather lengthy and technical, it is moved to Appendix \ref{appendix2-proof-dexm}.\\
\\
\noindent $(iii)$ Let $g$ be the Euclidean metric on $\RR^{2n}$; notice that one can take any $r_0>0$ as a slow variation constant for $g$. Given any $\varphi\in\SSS(\RR^{2n})\backslash\{0\}$, the mapping $\boldsymbol{\varphi}:\RR^{2n}\rightarrow \SSS(\RR^{2n})$, $\boldsymbol{\varphi}(X):=\varphi(\cdot-X)$, is a smooth non-degenerate element of $\Conf_g(\RR^{2n};r)$ for any $r>0$; in this case, the quantity \eqref{non-degen-ele-cond-int-cottt} equals $\|\varphi\|^2_{L^2(\RR^{2n})}$.
\end{example}

\begin{remark}
If $g$ is smooth and satisfies the bounds \eqref{ine-for-met-der-on-all-var}, one does not need to regularise it as in Example \ref{exi-of-good-par-off} $(ii)$ in order for $\boldsymbol{\varphi}$ to satisfy the properties $(a)-(d)$; it suffices to define $\boldsymbol{\varphi}$ via $g$. The H\"ormander metrics of the commonly used calculi almost always satisfy \eqref{ine-for-met-der-on-all-var}. In fact, if $g_{(x,\xi)}=f(x,\xi)^{-2}|dx|^2+F(x,\xi)^{-2}|d\xi|^2$ is a H\"ormander metric on $\RR^{2n}$ with $f$ and $F$ smooth positive and $f\in S(f,g)$ and $F\in S(F,g)$ (this holds true for the Shubin calculus, the SG-calculus, the H\"ormander $S_{\rho,\delta}$-calculus), then it is straightforward to check that $g$ satisfies the bounds \eqref{ine-for-met-der-on-all-var}.
\end{remark}

\subsection{A geometric version of the short-time Fourier transform}\label{GSTFT}

We denote by $\mathcal{F}_{\sigma}$ the symplectic Fourier transform on $W$:
$$
\mathcal{F}_{\sigma}f(X)=\int_W e^{-2\pi i [X,Y]} f(Y)dY,\quad f\in L^1(W).
$$
Recall that $\mathcal{F}_{\sigma}\mathcal{F}_{\sigma}=\operatorname{Id}$.\\
\indent Let $\boldsymbol{\varphi}\in\Conf_g(W;r)$ and set $\varphi_X:=\boldsymbol{\varphi}(X)$, $X\in W$. We define the \textit{geometric short-time Fourier transform} (GSTFT) $\VV_{\boldsymbol{\varphi}}f$ of $f\in \SSS'(W)$ with respect to $\boldsymbol{\varphi}$ as
$$
\VV_{\boldsymbol{\varphi}}f(X,\Xi):=\mathcal{F}_{\sigma}(f\overline{\varphi_X})(\Xi)=\langle f,e^{-2\pi i [\Xi,\cdot]}\overline{\varphi_X}\rangle,
\quad X,\Xi\in W.
$$
When $f\in L^1_{(1+|\cdot|)^{-s}}(W)$, for some $s\geq 0$, where $|\cdot|$ is (any) norm on $W$, we have
$$
\VV_{\boldsymbol{\varphi}}f(X,\Xi)=\int_W e^{-2\pi i[\Xi,Y]} f(Y)\overline{\varphi_X(Y)} dY,\quad X,\Xi\in W.
$$
Given any norm $|\cdot|$ on $W$, for each $k\in\NN$, the mapping $W\rightarrow \DD_{L^{\infty}_{(1+|\cdot|)^{-k-1}}}(W)$, $\Xi\mapsto e^{-2\pi i [\Xi,\cdot]}$, is of class $\mathcal{C}^k$. Hence, $W\times W\rightarrow \SSS(W)$, $(X,\Xi)\mapsto e^{-2\pi i[\Xi,\cdot]}\overline{\varphi}_X$, is strongly measurable. If $\boldsymbol{\varphi}$ is of class $\mathcal{C}^k$, $0\leq k\leq\infty$, then this mapping is also of class $\mathcal{C}^k$. Consequently, the function $W\times W\rightarrow \CC$, $(X,\Xi)\mapsto \VV_{\boldsymbol{\varphi}}f(X,\Xi)$, is always measurable and, if $\boldsymbol{\varphi}$ is of class $\mathcal{C}^k$, $0\leq k\leq \infty$, then $\VV_{\boldsymbol{\varphi}}f\in\mathcal{C}^k(W\times W)$, $0\leq k\leq\infty$.

\begin{remark}\label{rem-for-euc-metric-stand}
Let $g$ be the standard Euclidian metric on $\RR^{2n}$ and $\boldsymbol{\varphi}\in\Conf_g(\RR^{2n};r)$ as constructed in Example \eqref{exi-of-good-par-off} $(iii)$. Then $\VV_{\boldsymbol{\varphi}}f(X,\Xi)=V_{\varphi}f(X,\sigma \Xi)$, $X,\Xi\in\RR^{2n}$, where $V_{\varphi}$ is the standard STFT on $\RR^{2n}$.
\end{remark}

Our immediate goal is to study the mapping properties of $\VV_{\boldsymbol{\varphi}}$. For this purpose, we consider the Fr\'echet space $\ds\lim_{\substack{\longleftarrow\\ s\rightarrow \infty}}L^{\infty}_{(1+|\cdot|)^s}(W\times W)$ and the $(LB)$-spaces $\ds\lim_{\substack{\longrightarrow\\ s\rightarrow \infty}} L^{\infty}_{(1+|\cdot|)^{-s}}(W\times W)$ and $\ds\lim_{\substack{\longrightarrow\\ s\rightarrow \infty}} L^1_{(1+|\cdot|)^{-s}}(W\times W)$, where $|\cdot|$ is any norm on $W\times W$ (the linking mappings in the projective and inductive limits are the canonical inclusion); neither of these spaces depends on the particular choice of the norm $|\cdot|$. From \cite[Theorem 1.4]{reiher} (cf. \cite{bar-n-o,dierolf}), it follows that both inductive limits are regular and complete; for the completeness of $\displaystyle \lim_{\substack{\longrightarrow\\ s\rightarrow \infty}} L^{\infty}_{(1+|\cdot|)^{-s}}(W\times W)$, one employs \cite[Theorem 1.4]{reiher} and a standard argument to show that the space is quasi-complete and consequently complete in view of \cite[Theorem 3, p. 402]{kothe1}. We have the following continuous inclusions:
\begin{multline*}
\SSS(W\times W)\subseteq \lim_{\substack{\longleftarrow\\ s\rightarrow \infty}} L^{\infty}_{(1+|\cdot|)^s}(W\times W) \subseteq\lim_{\substack{\longrightarrow\\ s\rightarrow \infty}} L^{\infty}_{(1+|\cdot|)^{-s}}(W\times W)\subseteq\\
\subseteq \lim_{\substack{\longrightarrow\\ s\rightarrow \infty}} L^1_{(1+|\cdot|)^{-s}}(W\times W)\subseteq \SSS'(W\times W).
\end{multline*}
Furthermore, $\SSS(W\times W)$ is dense in $\displaystyle\lim_{\substack{\longrightarrow\\ s\rightarrow \infty}} L^1_{(1+|\cdot|)^{-s}}(W\times W)$ and the following topological isomorphism holds true \cite[Theorem 1.4]{reiher}:
\begin{equation}\label{top-iso-l1linf-spa-reg-lim-topplastr}
\left(\lim_{\substack{\longrightarrow\\ s\rightarrow \infty}} L^1_{(1+|\cdot|)^{-s}}(W\times W)\right)'_b=\lim_{\substack{\longleftarrow\\ s\rightarrow \infty}} L^{\infty}_{(1+|\cdot|)^s}(W\times W),
\end{equation}
where the index $b$ stands for the strong dual topology.\\
\indent The ensuing proposition collects the continuity properties of the GSTFT.

\begin{proposition}\label{lemma-for-conti-of-stft-sympl}
Let $0<r\leq r_0$.
\begin{itemize}
\item[$(i)$] The sesquilinear mapping
\begin{equation}\label{ses-lin-mapp-stft-for-s'}
\SSS'(W)\times \Conf_g(W;r)\rightarrow \lim_{\substack{\longrightarrow \\ s\rightarrow \infty}} L^{\infty}_{(1+|\cdot|)^{-s}}(W\times W),\quad (f,\boldsymbol{\varphi})\mapsto \VV_{\boldsymbol{\varphi}}f,
\end{equation}
is well-defined and hypocontinuous. Furthermore, for any bounded subset $B$ of $\SSS'(W)$ there is $s>0$ such that $\VV_{\boldsymbol{\varphi}}f\in L^{\infty}_{(1+|\cdot|)^{-s}}(W\times W)$, for all $f\in B$, $\boldsymbol{\varphi}\in\Conf_g(W;r)$, and the set of linear mappings
\begin{equation*}
\Conf_g(W;r)\rightarrow L^{\infty}_{(1+|\cdot|)^{-s}}(W\times W),\quad\boldsymbol{\varphi}\mapsto \VV_{\overline{\boldsymbol{\varphi}}}f,\qquad f\in B,
\end{equation*}
is an equicontinuous subset of $\mathcal{L}(\Conf_g(W;r),L^{\infty}_{(1+|\cdot|)^{-s}}(W\times W))$.
\item[$(ii)$] The sesquilinear mapping
$$
\SSS(W)\times \Conf_g(W;r)\rightarrow \lim_{\substack{\longleftarrow \\ s\rightarrow \infty}} L^{\infty}_{(1+|\cdot|)^s}(W\times W),\quad (\psi,\boldsymbol{\varphi})\mapsto \VV_{\boldsymbol{\varphi}}\psi,
$$
is well-defined and continuous.
\item[$(iii)$] Let $\boldsymbol{\varphi}\in\Conf_g(W;r)$ be of class $\mathcal{C}^{\infty}$. Assume that the function $W\times W\rightarrow \CC$, $(X,Y)\mapsto \varphi(X,Y):=\boldsymbol{\varphi}(X)(Y)$, satisfies the following: for every $k\in\NN$, there is $s\geq 0$ such that for all $m>0$
$$
\sup_{\substack{l'\leq k\\ l''\leq k}} \sup_{X,Y\in W}\sup_{\substack{T_1,\ldots,T_{l'}\in W\backslash\{0\}\\ S_1,\ldots,S_{l''}\in W\backslash\{0\}}} \frac{\left|(\prod_{j=1}^{l'}\partial_{T_j;X}) (\prod_{j=1}^{l''}\partial_{S_j;Y})\varphi(X,Y)\right| (1+g^{\sigma}_X(Y-U_{X,r}))^{m/2}} {(\prod_{j=1}^{l'} g_X(T_j)^{1/2}) (\prod_{j=1}^{l''} g_X(S_j)^{1/2})(1+|X|)^s(1+|Y|)^s}<\infty,
$$
where $|\cdot|$ is (any) norm on $W$. Then the mapping $\SSS(W)\rightarrow \SSS(W\times W)$, $\psi\mapsto \VV_{\boldsymbol{\varphi}}\psi$, is well-defined and continuous.
\end{itemize}
\end{proposition}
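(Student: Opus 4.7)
The plan is to treat all three parts by a common mechanism: estimate the scalar $\langle f, e^{-2\pi i[\Xi,\cdot]}\overline{\varphi_X}\rangle$, or its integral representation when $f=\psi\in\SSS(W)$, by combining the confinement bounds
\[
|\varphi_X^{(l)}(Y;T_1,\ldots,T_l)| \leq \|\boldsymbol{\varphi}\|^{(k)}_{g,r}(1+g^{\sigma}_X(Y-U_{X,r}))^{-k/2}\prod_{j=1}^l g_X(T_j)^{1/2}
\]
valid for a.e.\ $X$, with the polynomial controls on the metric from Lemma \ref{tec-res-ine-for-hor-metr-ini}, most notably \eqref{ineq-for-metric-p-2} and \eqref{ineq-for-metric-p-7}. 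The former gives $1+g_Y(X-Y) \leq C(1+g^{\sigma}_X(Y-U_{X,r}))^{N_0+1}$ and, together with \eqref{ineq-for-metric-p-7}, yields a two-sided polynomial comparison between $(1+|X-Y|)$ and $(1+g^{\sigma}_X(Y-U_{X,r}))$ modulo factors polynomial in $|X|$. Throughout, $g_X(T)^{1/2}$ for a vector $T$ of unit Euclidean length is polynomially bounded in $|X|$ by \eqref{ineq-for-metric-p-7}.

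For $(i)$, fix $f\in\SSS'(W)$ and pick $k,C$ so that $|\langle f,\phi\rangle|\leq C\sum_{|\alpha|\leq k}\sup_Y(1+|Y|)^k|\partial^\alpha\phi(Y)|$; if $f$ ranges in a bounded $B\subseteq\SSS'(W)$, the Banach--Steinhaus theorem lets us choose a single pair $(k,C)$ uniformly. I would apply this with $\phi=e^{-2\pi i[\Xi,\cdot]}\overline{\varphi_X}$: Leibniz's rule, together with $|\partial^\alpha_Y e^{-2\pi i[\Xi,Y]}|\leq (2\pi|\Xi|)^{|\alpha|}$ and the confinement estimate above, controls each Schwartz seminorm of $\phi$ by $\|\boldsymbol{\varphi}\|^{(k')}_{g,r}$ times a polynomial in $|X|$ and $|\Xi|$ (using \eqref{ineq-for-metric-p-7} to make every $g_X(T_j)^{1/2}$ polynomial in $|X|$); the weight $(1+|Y|)^k$ is absorbed by choosing $k'$ sufficiently large in $(1+g^{\sigma}_X(Y-U_{X,r}))^{-k'/2}$ via \eqref{ineq-for-metric-p-2} and \eqref{ineq-for-metric-p-7}. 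This produces $|\VV_{\boldsymbol{\varphi}}f(X,\Xi)|\leq C'\|\boldsymbol{\varphi}\|^{(k')}_{g,r}(1+|X|+|\Xi|)^s$ with $s,k',C'$ depending only on $k$, which gives both the equicontinuity claim and the hypocontinuity of the sesquilinear map.

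For $(ii)$, I aim at arbitrary polynomial decay $(1+|X|+|\Xi|)^s$. Decay in $\Xi$ is produced by integration by parts: applying the elliptic operator $(I-\Delta_Y/4\pi^2)^k$ to $e^{-2\pi i[\Xi,Y]}$ gives a factor $(1+|\Xi|^2)^k$, and transferring it onto $\psi\overline{\varphi_X}$ produces, via Leibniz and the confinement bound, a factor $C\|\boldsymbol{\varphi}\|^{(2k)}_{g,r}$ times Schwartz seminorms of $\psi$ times $(1+g^{\sigma}_X(Y-U_{X,r}))^{-k}$ and a polynomial in $|X|$. Decay in $X$ comes from trading confinement decay against $|X-Y|$: by \eqref{ineq-for-metric-p-2} and \eqref{ineq-for-metric-p-7},
\[
(1+|X|)^s\leq C(1+|Y|)^s(1+|X-Y|)^s\leq C'(1+|Y|)^s(1+g^{\sigma}_X(Y-U_{X,r}))^{s''}
\]
for a suitable $s''$; picking $k$ large enough defeats both this factor and the polynomial growth in $|X|$ inherited from the metric, while $(1+|Y|)^s$ is absorbed by the Schwartz decay of $\psi$. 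Integration in $Y$ then yields the claimed bound with continuous dependence on the Schwartz seminorms of $\psi$ and on $\|\boldsymbol{\varphi}\|^{(k')}_{g,r}$.

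For $(iii)$, the additional hypothesis allows differentiation under the integral sign in both $X$ and $\Xi$: each $\partial^\alpha_X$ hitting $\overline{\varphi_X(Y)}$ is controlled by the hypothesis via a bound $C(1+|X|)^s(1+|Y|)^s(1+g^{\sigma}_X(Y-U_{X,r}))^{-m/2}$ for arbitrary $m$, and each $\partial^\beta_\Xi$ acting on the exponential produces a factor $(2\pi|Y|)^{|\beta|}$. Running the argument of $(ii)$ on the differentiated integrand --- where the extra polynomial growth in $|X|$ and $|Y|$ is again absorbed by the confinement decay and by the Schwartz decay of $\psi$ respectively --- yields arbitrary polynomial decay of every partial derivative of $\VV_{\boldsymbol{\varphi}}\psi$, which is exactly continuous mapping into $\SSS(W\times W)$. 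The main obstacle throughout is purely organisational: matching the confinement exponent $k/2$ against the polynomial growth exponents of $|g_X|^{1/2}$ and of $(1+|X-Y|)$ coming from \eqref{ineq-for-metric-p-7} and \eqref{ineq-for-metric-p-2}, and tracking how large $k$ must be chosen as a function of the target $s$ and of the structure constants $n$ and $N_0$.
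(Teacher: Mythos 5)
Your proposal follows the paper's argument in all essentials: for $(i)$ you bound $\VV_{\boldsymbol\varphi}f$ by applying the $f$-seminorm to the window $e^{-2\pi i[\Xi,\cdot]}\overline{\varphi_X}$ together with confinement and the polynomial metric comparisons; for $(ii)$ you integrate by parts in $Y$ to gain decay in $\Xi$ and trade confinement decay against $(1+|X-Y|)$ for decay in $X$; for $(iii)$ you differentiate under the integral and feed in the extra hypothesis, and your choice of the Euclidean Laplacian in place of the paper's $\Xi$-adapted direction $\theta$ with $g_0(\theta)=1$ is an interchangeable variant. One caveat worth noting: the polynomial bound on $g_X(T)$ for a fixed Euclidean unit vector $T$ does not actually follow from \eqref{ineq-for-metric-p-7}, which controls only the determinants $|g_X|$ and $|g^\sigma_X|$; it comes instead from the temperance axiom with $Y=0$ (i.e.\ $g_X(T)\leq C_0\,g_0(T)\,(1+g^\sigma_0(X))^{N_0}$), which is precisely what the paper invokes in the step leading to \eqref{est-for-met-at-poi-sing}, so you should cite temperance there rather than the determinant estimate.
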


\begin{remark}
The condition in $(iii)$ is satisfied when $\boldsymbol{\varphi}$ is as in Example \ref{exi-of-good-par-off} $(ii)$; in fact, it is satisfied with $s=0$ for all $k\in\NN$.
\end{remark}

\begin{proof} We first address $(i)$. We claim that the second part of $(i)$ implies the first one. To see this, notice first that the second part implies that \eqref{ses-lin-mapp-stft-for-s'} is separately continuous. Indeed, it implies that for each fixed $\boldsymbol{\varphi}\in\Conf_g(W;r)$, the linear map
$$
\SSS'(W)\rightarrow \ds\lim_{\substack{\longrightarrow \\ s\rightarrow \infty}} L^{\infty}_{(1+|\cdot|)^{-s}}(W\times W),\quad f\mapsto \VV_{\boldsymbol{\varphi}}f,
$$
is well-defined and maps bounded sets into bounded sets and consequently, it is continuous since $\SSS'(W)$ is bornological. The continuity in $\boldsymbol{\varphi}\in\Conf_g(W;r)$ for fixed $f\in\SSS'(W)$ is immediate. As both $\SSS'(W)$ and $\Conf_g(W;r)$ are barrelled spaces, \cite[Theorem 5, p. 159]{kothe2} implies that \eqref{ses-lin-mapp-stft-for-s'} is hypocontinuous. It remains to prove the second part. Let $B$ be a bounded subset of $\SSS'(W)$. Then $B$ is equicontinuous. Hence, there exist $C'\geq 1$ and $k\in\ZZ_+$ such that
\begin{equation}\label{equicon-subset-s'-bou-for-dualit}
|\langle f,\psi\rangle|\leq C' \sup_{k'\leq k}\sup_{\substack{Y\in W\\ T_1,\ldots, T_{k'}\in W\backslash\{0\}}} \frac{|\partial_{T_1}\ldots\partial_{T_{k'}}\psi(Y)|(1+g_0(Y))^{k/2}} {\prod_{j=1}^{k'} g_0(T_j)^{1/2}},\;\; \psi\in\SSS(W),\, f\in B.
\end{equation}
Let $\boldsymbol{\varphi}\in\Conf_g(W;r)$ and $\varphi_X:=\boldsymbol{\varphi}(X)$, $X\in W$. For $k'\leq k$, $\partial_{T_1;Y}\ldots\partial_{T_{k'};Y}(e^{-2\pi i[\Xi,Y]}\varphi_X(Y))$ is a sum of $2^{k'}$ terms of the form
\begin{equation}\label{est-par-forfirst-part-cont-s'}
e^{-2\pi i[\Xi,Y]}\left(\prod_{j\in K'_1}(-2\pi i)[\Xi,T_j]\right)\left(\left(\prod_{j\in K'_2}\partial_{T_j;Y}\right)\varphi_X(Y)\right),
\end{equation}
where the sets $K'_1$ and $K'_2$ are disjoint and their union is $\{1,\ldots,k'\}$. Observe that \eqref{est-par-forfirst-part-cont-s'} is bounded by
\begin{align*}
&\frac{(2\pi)^{k'}\|\varphi_X\|^{(k)}_{g_X,U_{X,r}}}{(1+g^{\sigma}_X(Y-U_{X,r}))^{k/2}} \left(\prod_{j\in K'_1}g_0(T_j)^{1/2}\right)\left(\prod_{j\in K'_1} g^{\sigma}_0(\Xi)^{1/2}\right)\left(\prod_{j\in K'_2}g_X(T_j)^{1/2}\right)\\
&\leq \frac{C'_1\|\varphi_X\|^{(k)}_{g_X,U_{X,r}} (1+g^{\sigma}_0(\Xi))^{k/2} (1+g^{\sigma}_0(X))^{kN_0/2} \prod_{j=1}^{k'}g_0(T_j)^{1/2}}{(1+g^{\sigma}_X(Y-U_{X,r}))^{k/2}}.
\end{align*}
Since $g_0(Y)\leq 2g_0(X-Y)+2g_0(X)$, we deduce
\begin{align}
g_0(Y)&\leq 2C_0g_X(X-Y)(1+g^{\sigma}_0(X))^{N_0}+2g_0(X)\leq 2C_0(1+g_X(X-Y))(1+g^{\sigma}_0(X))^{N_0+1}\nonumber \\
&\leq 2C_0(1+2r^2+2g^{\sigma}_X(Y-U_{X,r})) (1+g^{\sigma}_0(X))^{N_0+1}.\label{est-for-met-at-poi-sing}
\end{align}
In view of \eqref{equicon-subset-s'-bou-for-dualit}, this implies
$$
|\VV_{\overline{\boldsymbol{\varphi}}}f(X,\Xi)|\leq C'_2 \|\varphi_X\|^{(k)}_{g_X,U_{X,r}} (1+g^{\sigma}_0(\Xi))^{k/2} (1+g^{\sigma}_0(X))^{k(2N_0+1)/2},\;\; X,\Xi\in W,
$$
which, in turn, proves the second part of $(i)$.\\
\indent We turn our attention to $(ii)$. For each $k\in\ZZ_+$, denote by $\|\psi\|_k$ the norm of $\psi\in \SSS(W)$ given on the right-hand side of \eqref{equicon-subset-s'-bou-for-dualit}. Let $k\in\ZZ_+$ be arbitrary but fixed. Pick $N,N_1\in\ZZ_+$ such that $N\geq kN_0+k$ and $N_1\geq 2kN_0+k+2n+2$. For each fixed $\Xi\in W$ choose $\theta=\theta(\Xi)\in W$ such that $g_0(\theta)=1$ and $[\Xi,\theta]=g^{\sigma}_0(\Xi)^{1/2}$. Let $\psi\in \SSS(W)$ and $\boldsymbol{\varphi}\in\Conf_g(W;r)$. Employing
\begin{equation}\label{equ-for-chn-est-der-fun}
(1-(2\pi i)^{-1}\partial_{\theta;Y})^ke^{-2\pi i[\Xi, Y]}=(1+g^{\sigma}_0(\Xi)^{1/2})^ke^{-2\pi i[\Xi,Y]},
\end{equation}
we integrate by parts in the integral defining $\VV_{\boldsymbol{\varphi}}\psi(X,\Xi)$ and infer
\begin{align*}
(1+&g^{\sigma}_0(\Xi)^{1/2})^k|\VV_{\boldsymbol{\varphi}}\psi(X,\Xi)|\\
&\leq C'_1\sum_{k'+k''\leq k}\int_W |\partial^{k''}_{\theta}\psi(Y)||\partial^{k'}_{\theta;Y}\varphi_X(Y)|dY\\
&\leq C'_2\|\psi\|_{N_1}\|\varphi_X\|^{(2N)}_{g_X,U_{X,r}} \sum_{k'+k''\leq k}\int_W \frac{g_X(\theta)^{k'/2}dY}{(1+g_0(Y))^{N_1/2}(1+g^{\sigma}_X(Y-U_{X,r}))^N}.
\end{align*}
As $g^{\sigma}_0(T)\leq C''g_0(T)$, $ T\in W$, for some $C''\geq 1$, employing \eqref{ineq-for-metric-p-1} we infer (recall $g_0(\theta)=1$)
\begin{align}
g_X(\theta)^{1/2}&\leq C'_3g_Y(\theta)^{1/2} (1+g^{\sigma}_X(Y-U_{X,r}))^{N_0/2}\nonumber\\
&\leq C'_4(1+g_0(Y))^{N_0/2} (1+g^{\sigma}_X(Y-U_{X,r}))^{N_0/2}.\label{ine-for-part-of-est-s'-cont}
\end{align}
In view of \eqref{ineq-for-metric-p-1}, we have
\begin{align*}
1+g_0(X-Y)^{1/2}&\leq C'_5(1+g_Y(X-Y)^{1/2})(1+g_0(Y))^{N_0/2}\\
&\leq C'_6 (1+g_X(X-Y)^{1/2}) (1+g^{\sigma}_X(Y-U_{X,r}))^{N_0/2}(1+g_0(Y))^{N_0/2}\\
&\leq C'_7 (1+g^{\sigma}_X(Y-U_{X,r}))^{1+N_0/2}(1+g_0(Y))^{N_0/2}.
\end{align*}
Consequently, as $1+g_0(X)^{1/2}\leq (1+g_0(X-Y)^{1/2})(1+g_0(Y)^{1/2})$, we have
\begin{equation}\label{est-for-cont-of-s'-with-stft-metr}
1+g_0(X)^{1/2}\leq C'_8 (1+g^{\sigma}_X(Y-U_{X,r}))^{1+N_0/2}(1+g_0(Y))^{(N_0+1)/2}.
\end{equation}
Employing \eqref{ine-for-part-of-est-s'-cont} and \eqref{est-for-cont-of-s'-with-stft-metr} in the above estimate for $\VV_{\boldsymbol{\varphi}}\psi$, we deduce
\begin{equation*}
(1+g^{\sigma}_0(X)^{1/2})^k(1+g^{\sigma}_0(\Xi)^{1/2})^k |\VV_{\boldsymbol{\varphi}}\psi(X,\Xi)|\leq C'_9 \|\psi\|_{N_1}\|\varphi_X\|^{(2N)}_{g_X,U_{X,r}},\;\; X,\Xi\in W,
\end{equation*}
which completes the proof of $(ii)$.\\
\indent It remains to prove $(iii)$. As before, we denote by $\|\cdot\|_k$, $k\in\NN$, the norms on $\SSS(W)$ given by the right-hand side of \eqref{equicon-subset-s'-bou-for-dualit}. Let $\psi\in \SSS(W)$ and $k\in\ZZ_+$ be arbitrary but fixed. Pick $s\geq 0$ for which the condition in $(iii)$ holds for this $k$; we employ the norm $|X|:=g_0(X)^{1/2}$ on $W$ in this condition. Set $m:= (2+N_0)(2kN_0+s+k)$ and pick $N\in\ZZ_+$ such that $N\geq k+s+(N_0+1)(2kN_0+s+k)+2n+2$. For $l',l''\leq k$, $l',l''\in\NN$, we employ \eqref{equ-for-chn-est-der-fun} with $\theta\in W$ chosen as in $(ii)$ to infer
\begin{multline*}
(1+g^{\sigma}_0(\Xi)^{1/2})^k \left|\left(\prod_{j=1}^{l'}\partial_{T_j;X}\right)\left(\prod_{j=1}^{l''} \partial_{S_j;\Xi}\right) \VV_{\boldsymbol{\varphi}}\psi(X,\Xi)\right|\\
\leq C_1\sum_{\substack{k_1+k_2+k_3\leq k\\ k_1\leq l''}} \int_W \left|\partial_{\theta;Y}^{k_1}\left(\prod_{j=1}^{l''}[S_j,Y]\right)\right| |\partial_{\theta;Y}^{k_2}\psi(Y)| \left|\left(\prod_{j=1}^{l'}\partial_{T_j;X}\right) \partial_{\theta;Y}^{k_3}\varphi_X(Y)\right|dY.
\end{multline*}
Notice that
$$
\left|\partial_{\theta;Y}^{k_1}\left(\prod_{j=1}^{l''}[S_j,Y]\right)\right|\leq C_2 g_0(Y)^{(l''-k_1)/2}\prod_{j=1}^{l''}g^{\sigma}_0(S_j)^{1/2}\leq C_3 (1+g_0(Y))^{k/2}\prod_{j=1}^{l''}g_0(S_j)^{1/2}.
$$
Consequently, in view of \eqref{est-for-cont-of-s'-with-stft-metr}, we deduce
\begin{align*}
&(1+g^{\sigma}_0(X)^{1/2})^k(1+g^{\sigma}_0(\Xi)^{1/2})^k \left|\left(\prod_{j=1}^{l'}\partial_{T_j;X}\right)\left(\prod_{j=1}^{l''} \partial_{S_j;\Xi}\right) \VV_{\boldsymbol{\varphi}}\psi(X,\Xi)\right|\\
&\leq C_4\|\psi\|_N\left(\prod_{j=1}^{l''}g_0(S_j)^{1/2}\right) \sum_{k_1+k_2+k_3\leq k} \int_W \frac{g_X(\theta)^{k_3/2}(1+g_0(X))^{(s+k)/2}\prod_{j=1}^{l'}g_X(T_j)^{1/2} dY}{(1+g_0(Y))^{(N-k-s)/2} (1+g^{\sigma}_X(Y-U_{X,r}))^{m/2}}\\
&\leq C_5\|\psi\|_N\left(\prod_{j=1}^{l'}g_0(T_j)^{1/2}\right) \left(\prod_{j=1}^{l''}g_0(S_j)^{1/2}\right) \\
&{}\quad \cdot\sum_{k_1+k_2+k_3\leq k} \int_W \frac{(1+g_0(X))^{(k_3N_0+l'N_0+s+k)/2} dY}{(1+g_0(Y))^{(N-k-s)/2} (1+g^{\sigma}_X(Y-U_{X,r}))^{m/2}}\\
&\leq C_6\|\psi\|_N\left(\prod_{j=1}^{l'}g_0(T_j)^{1/2}\right) \left(\prod_{j=1}^{l''}g_0(S_j)^{1/2}\right) \int_W (1+g_0(Y))^{-n-1}dY.
\end{align*}
As the very last integral is a positive constant, the proof is complete.
\end{proof}

If $r<r_0$ is sufficiently small, one can always regularise the elements of $\Conf_g(W;r)$ in the following way such that they satisfy the condition in Proposition \ref{lemma-for-conti-of-stft-sympl} $(iii)$.

\begin{lemma}\label{lem-for-smooth-tem-part-of-unity-in-both-coord}
Let $r',r>0$ satisfy $0<r'\sqrt{C_0}<r\leq r_0$ and let $0<r'_0\leq C_0^{-1/2}r-r'$. Let $\boldsymbol{\varphi}\in\Conf_g(W;r'_0)$ be such that $\supp\varphi_X\subseteq U_{X,r'_0}$, $ X\in W$, where $\varphi_X:=\boldsymbol{\varphi}(X)$. For $\boldsymbol{\psi}\in\Conf_g(W;r')$, set $\psi_X:=\boldsymbol{\psi}(X)$, $X\in W$, and
\begin{equation}\label{eqs11-1}
\widetilde{\boldsymbol{\psi}}(X):=\widetilde{\psi}_X,\quad \mbox{where}\quad \widetilde{\psi}_X(Y):=\int_W \psi_Z(Y)\varphi_Z(X)dv_g(Z),\; X,Y\in W.
\end{equation}
Then $\widetilde{\boldsymbol{\psi}}\in\Conf_g(W;r)$, it is of class $\mathcal{C}^{\infty}$ and it satisfies the condition in Proposition \ref{lemma-for-conti-of-stft-sympl} $(iii)$ with $s=0$ for all $k\in\NN$. Moreover, the mapping $\Conf_g(W;r')\rightarrow \Conf_g(W;r)$, $\boldsymbol{\psi}\mapsto \widetilde{\boldsymbol{\psi}}$, with $\widetilde{\boldsymbol{\psi}}$ given by \eqref{eqs11-1}, is continuous.\\
\indent Furthermore, if there exists $r''>0$ such that $\supp\psi_X\subseteq U_{X,r''}$, $X\in W$, then $\supp\tilde{\psi}_X\subseteq U_{X,(r''+r'_0)\sqrt{C_0}}$, $X\in W$.
\end{lemma}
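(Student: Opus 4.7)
The plan is to extract a single master estimate by differentiating under the integral sign, and to let that estimate imply the confinement, the regularity, the continuity, and the Proposition \ref{lemma-for-conti-of-stft-sympl} $(iii)$ condition all at once. The geometric heart of everything is the inclusion $U_{Z,r'}\subseteq U_{X,r}$ valid whenever $\varphi_Z(X)\neq 0$.

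First I would prove the support statement and, en route, the central inclusion. If $\varphi_Z(X)\neq 0$ then $g_Z(Z-X)\leq r'^2_0\leq r_0^2$, so slow variation applies at $Z$ and gives $g_X(T)\leq C_0 g_Z(T)$ for every $T\in W$. Thus for any $W\in U_{Z,r'}$ the triangle inequality for $g_X^{1/2}$ yields
$$
g_X(X-W)^{1/2}\leq g_X(X-Z)^{1/2}+g_X(Z-W)^{1/2}\leq \sqrt{C_0}\,r'_0+\sqrt{C_0}\,r'\leq r,
$$
using the hypothesis $r'_0\leq C_0^{-1/2}r-r'$; this shows $U_{Z,r'}\subseteq U_{X,r}$. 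Taking $W=Y\in U_{Z,r''}$ and replacing $r'$ by $r''$ gives the last support claim of the lemma. The inclusion, together with slow variation of $g^\sigma$ at $Z$ (a consequence of slow variation of $g$, cf.\ \eqref{ineq-for-metric-p-3}), yields the key weight comparison
$$
1+g^\sigma_X(Y-U_{X,r})\;\leq\; 1+g^\sigma_X(Y-U_{Z,r'})\;\leq\; C_0^{N_0+2}\bigl(1+g^\sigma_Z(Y-U_{Z,r'})\bigr)^{N_0+1}
$$
(up to harmless relabelling of the exponents via \eqref{ineq-for-metric-p-3}).

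Next I would formally differentiate under the integral: since $\varphi_Z\in\SSS(W)$ for each $Z$, for $T_1,\dots,T_{l'},S_1,\dots,S_{l''}\in W\setminus\{0\}$,
$$
\Bigl(\prod_{i=1}^{l'}\partial_{T_i;X}\Bigr)\Bigl(\prod_{j=1}^{l''}\partial_{S_j;Y}\Bigr)\widetilde{\psi}_X(Y)=\int_W\Bigl(\prod_{j=1}^{l''}\partial_{S_j;Y}\psi_Z(Y)\Bigr)\Bigl(\prod_{i=1}^{l'}\partial_{T_i;X}\varphi_Z(X)\Bigr)\,dv_g(Z).
$$
The confinement of $\boldsymbol{\psi}$ in $U_{Z,r'}$ and of $\boldsymbol{\varphi}$ in $U_{Z,r'_0}$ bound the integrand, for any $N_1,N_2\in\NN$, by
$$
\|\boldsymbol{\psi}\|^{(l''+N_1)}_{g,r'}\|\boldsymbol{\varphi}\|^{(l'+N_2)}_{g,r'_0}\,\frac{\prod_j g_Z(S_j)^{1/2}\prod_i g_Z(T_i)^{1/2}}{\bigl(1+g^\sigma_Z(Y-U_{Z,r'})\bigr)^{N_1/2}\bigl(1+g^\sigma_Z(X-U_{Z,r'_0})\bigr)^{N_2/2}}.
$$
Where $\varphi_Z(X)\neq 0$, slow variation converts every $g_Z(\cdot)^{1/2}$ into a $g_X(\cdot)^{1/2}$ at the cost of $\sqrt{C_0}$, and the weight comparison above converts the $Y$-weight into $(1+g^\sigma_X(Y-U_{X,r}))^{-N_1/2}$ after absorbing a small loss into $N_1$. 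The remaining $Z$-dependence is carried by $(1+g^\sigma_Z(X-U_{Z,r'_0}))^{-N_2/2}$, which is $dv_g(Z)$-integrable uniformly in $X$ for $N_2\geq 2((N_0+1)(n+1)+nN_0)$ by \eqref{ineq-for-metric-p-3-1} applied with $r$ replaced by $r'_0$ and the names of the variables swapped. This produces the master estimate
$$
\Bigl|\Bigl(\prod_{i=1}^{l'}\partial_{T_i;X}\Bigr)\Bigl(\prod_{j=1}^{l''}\partial_{S_j;Y}\Bigr)\widetilde{\psi}_X(Y)\Bigr|\leq C_{N_1}\|\boldsymbol{\psi}\|^{(l''+N_1)}_{g,r'}\|\boldsymbol{\varphi}\|^{(l'+N_2)}_{g,r'_0}\frac{\prod_j g_X(S_j)^{1/2}\prod_i g_X(T_i)^{1/2}}{\bigl(1+g^\sigma_X(Y-U_{X,r})\bigr)^{N_1/2}}.
$$
Setting $l'=0$ shows $\widetilde{\boldsymbol{\psi}}\in\Conf_g(W;r)$ with seminorms dominated linearly by those of $\boldsymbol{\psi}$, hence the continuity of $\boldsymbol{\psi}\mapsto\widetilde{\boldsymbol{\psi}}$; the full estimate with $s=0$ is precisely the hypothesis of Proposition \ref{lemma-for-conti-of-stft-sympl} $(iii)$.

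Finally, I would justify the differentiation under the integral and upgrade $\widetilde{\boldsymbol{\psi}}$ to a $\mathcal{C}^\infty(W;\SSS(W))$-map by dominated convergence: the same estimate, restricted to $X$ in a fixed compact set, gives $Z$-integrable majorants independent of $X$ for every partial derivative in $X$ and for each $\SSS(W)$-seminorm in the $Y$-variable, which is exactly what is needed for continuity and differentiability of $X\mapsto \widetilde{\psi}_X$ in $\SSS(W)$ to any order. The main technical obstacle is the careful bookkeeping between the two metrics $g_Z$ and $g_X$: one must transfer every $g_Z$-object into a $g_X$-object on the support of $\varphi_Z(X)$ in $Z$, and simultaneously arrange enough decay in $(1+g^\sigma_Z(X-U_{Z,r'_0}))$ to apply \eqref{ineq-for-metric-p-3-1}. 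Once the inclusion $U_{Z,r'}\subseteq U_{X,r}$ is in hand, everything else is a controlled computation.
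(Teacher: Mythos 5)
Your proof is correct and follows the paper's argument in all essentials: the geometric inclusion $U_{Z,r'}\subseteq U_{X,r}$ on the support of $\varphi_Z(X)$, differentiation under the integral, conversion of $g_Z$-quantities to $g_X$-quantities on that support, and integrability in $Z$ from \eqref{ineq-for-metric-p-3-1}. One small imprecision worth noting: the comparison $1+g^\sigma_X(Y-U_{Z,r'})\leq C_0\bigl(1+g^\sigma_Z(Y-U_{Z,r'})\bigr)$ is a direct consequence of slow variation (if $g_Z(X-Z)\leq r_0^2$ then $Q_X\leq C_0Q_Z$ implies $Q^\sigma_X\leq C_0Q^\sigma_Z$), not of the temperance-type bound \eqref{ineq-for-metric-p-3}, so no $(N_0+1)$-power loss is incurred and your master estimate really does carry the decay exponent $N_1/2$ with no absorption needed.
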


\begin{proof} Let $k\in \NN$ be arbitrary but fixed. Pick $N\in\ZZ_+$ such that $N\geq 2(N_0+1)(n+1)+2nN_0+k$. For $l',l''\leq k$, $l',l''\in\NN$, \eqref{eqs11-1} gives
\begin{multline*}
\frac{\left|(\prod_{j=1}^{l'}\partial_{T_j;X}) (\prod_{j=1}^{l''}\partial_{S_j;Y})\widetilde{\psi}_X(Y)\right|} {(\prod_{j=1}^{l'} g_X(T_j)^{1/2}) (\prod_{j=1}^{l''} g_X(S_j)^{1/2})}\\
\leq C'_1\|\boldsymbol{\psi}\|^{(N)}_{g,r'}\int_W \frac{\left|\left(\prod_{j=1}^{l'}\partial_{T_j;X}\right)\varphi_Z(X)\right|} {(1+g^{\sigma}_Z(Y-U_{Z,r'}))^{N/2}\prod_{j=1}^{l'} g_Z(T_j)^{1/2}}dv_g(Z).
\end{multline*}
We estimate the integrand as follows. When $Z\in W$ is such that $g_Z(X-Z)> r'^2_0$, the integrand is $0$. If $g_Z(X-Z)\leq r'^2_0$, then $g^{\sigma}_Z(Y-U_{Z,r'})\geq g^{\sigma}_X(Y-U_{Z,r'})/C_0$ and $U_{Z,r'}\subseteq U_{X,r}$. To verify the latter, let $Z'\in U_{Z,r'}$. Then
\begin{align*}
g_X(X-Z')^{1/2}&\leq C_0^{1/2}g_Z(X-Z')^{1/2}\leq C_0^{1/2}(g_Z(X-Z)^{1/2}+g_Z(Z-Z')^{1/2})\\
&\leq C_0^{1/2}(r'_0+r')\leq r,
\end{align*}
which proves that $U_{Z,r'}\subseteq U_{X,r}$. Hence, in view of \eqref{ineq-for-metric-p-3-1}, we deduce
$$
\frac{\left|(\prod_{j=1}^{l'}\partial_{T_j;X}) (\prod_{j=1}^{l''}\partial_{S_j;Y})\widetilde{\psi}_X(Y)\right|} {(\prod_{j=1}^{l'} g_X(T_j)^{1/2}) (\prod_{j=1}^{l''} g_X(S_j)^{1/2})}
\leq C'_2\|\boldsymbol{\psi}\|^{(N)}_{g,r'}(1+g^{\sigma}_X(Y-U_{X,r}))^{-k/2}.
$$
Employing standard arguments and these bounds, one shows that $\widetilde{\boldsymbol{\psi}}\in\mathcal{C}^{\infty}(W;\SSS(W))$. The rest of the claimed properties of $\widetilde{\boldsymbol{\psi}}$ are an immediate consequence of the above bounds. The proof of the very last statement is straightforward and we omit it.
\end{proof}

\begin{remark}
The above lemma is applicable with $\boldsymbol{\varphi}\in\Conf_g(W;r)$ constructed in Example \ref{exi-of-good-par-off} $(i)$ and Example \ref{exi-of-good-par-off} $(ii)$.
\end{remark}

Similarly as the classical STFT, $\VV_{\boldsymbol{\varphi}}$ satisfies the following orthogonality relation.

\begin{proposition}
Let $\boldsymbol{\varphi},\boldsymbol{\psi}\in\Conf_g(W;r)$ and $f_1,f_2\in L^2(W)$. Then $\VV_{\boldsymbol{\varphi}}f_1, \VV_{\boldsymbol{\psi}}f_2\in L^2(W\times W, dv_gd\lambda)$ and
\begin{equation}\label{equality-orth-for-fam}
(\VV_{\boldsymbol{\varphi}}f_1,\VV_{\boldsymbol{\psi}}f_2)_{L^2(W\times W, dv_gd\lambda)}= (f_1,I_{\boldsymbol{\varphi}\overline{\boldsymbol{\psi}}}f_2)_{L^2(W)}.
\end{equation}
\end{proposition}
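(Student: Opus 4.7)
The plan is to reduce the orthogonality identity to Parseval's formula for the symplectic Fourier transform in the $\Xi$ variable for each fixed $X$, followed by an application of Fubini in the remaining variables. First I would establish the auxiliary $L^2$-boundedness of $\VV_{\boldsymbol{\varphi}}$. Fix $X\in W$; since $\varphi_X:=\boldsymbol{\varphi}(X)\in\SSS(W)\subseteq L^\infty(W)$ and $f\in L^2(W)$, the product $f\overline{\varphi_X}$ belongs to $L^1(W)\cap L^2(W)$, and by definition $\VV_{\boldsymbol{\varphi}}f(X,\cdot)=\mathcal{F}_\sigma(f\overline{\varphi_X})$. Since $\mathcal{F}_\sigma$ is (up to a linear symplectic change of variables) the ordinary Fourier transform on $W$ and satisfies $\mathcal{F}_\sigma\mathcal{F}_\sigma=\operatorname{Id}$, it is unitary on $L^2(W)$; Parseval therefore gives
$$
\int_W |\VV_{\boldsymbol{\varphi}}f(X,\Xi)|^2\, d\Xi = \int_W |f(Y)|^2|\varphi_X(Y)|^2\, dY.
$$

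Integrating this identity in $X$ against $dv_g$ and applying Tonelli (the integrand is non-negative) yields
$$
\|\VV_{\boldsymbol{\varphi}}f\|^2_{L^2(W\times W,\, dv_gd\lambda)} = \int_W |f(Y)|^2\, I_{|\boldsymbol{\varphi}|^2}(Y)\, dY.
$$
By Lemma \ref{rem-about-part-of-unity} $(ii)$ we have $|\boldsymbol{\varphi}|^2=\boldsymbol{\varphi}\overline{\boldsymbol{\varphi}}\in\Conf_g(W;r)$, and by $(iii)$ of the same lemma $I_{|\boldsymbol{\varphi}|^2}\in S(1,g)\subseteq L^\infty(W)$. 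Hence $\VV_{\boldsymbol{\varphi}}f\in L^2(W\times W,\, dv_gd\lambda)$ with norm bounded by $\|I_{|\boldsymbol{\varphi}|^2}\|_{L^\infty}^{1/2}\|f\|_{L^2}$; the same bound applies to $\VV_{\boldsymbol{\psi}}f_2$, so both GSTFTs lie in $L^2(W\times W,\, dv_gd\lambda)$ and the inner product on the left side of \eqref{equality-orth-for-fam} is well defined.

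For the identity itself, polarised Parseval applied fibrewise in $\Xi$ for fixed $X$ gives
$$
\int_W \VV_{\boldsymbol{\varphi}}f_1(X,\Xi)\,\overline{\VV_{\boldsymbol{\psi}}f_2(X,\Xi)}\, d\Xi = \int_W f_1(Y)\overline{f_2(Y)}\,\overline{\varphi_X(Y)}\psi_X(Y)\, dY.
$$
Integrating in $X$ against $dv_g$ and exchanging the order of integration produces, by the definition of $I_{\boldsymbol{\varphi}\overline{\boldsymbol{\psi}}}$,
$$
\int_W f_1(Y)\overline{f_2(Y)}\left(\int_W \overline{\varphi_X(Y)}\psi_X(Y)\, dv_g(X)\right) dY = \int_W f_1(Y)\overline{f_2(Y)}\,\overline{I_{\boldsymbol{\varphi}\overline{\boldsymbol{\psi}}}(Y)}\, dY,
$$
which is exactly $(f_1, I_{\boldsymbol{\varphi}\overline{\boldsymbol{\psi}}}f_2)_{L^2(W)}$. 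The only real care needed is in justifying this use of Fubini, and this is not genuinely an obstacle: applying Tonelli to $|f_1(Y)||f_2(Y)||\varphi_X(Y)||\psi_X(Y)|$, Cauchy--Schwarz in $Y$, and the $L^2$-estimate of the previous paragraph to the GSTFTs of $|f_1|$ with $\boldsymbol{\varphi}$ and of $|f_2|$ with $\boldsymbol{\psi}$, the total mass is dominated by $\|I_{|\boldsymbol{\varphi}|^2}\|_{L^\infty}^{1/2}\|I_{|\boldsymbol{\psi}|^2}\|_{L^\infty}^{1/2}\|f_1\|_{L^2}\|f_2\|_{L^2}<\infty$, and the exchange is legitimate.
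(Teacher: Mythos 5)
Your proof is correct and takes a slightly different route than the paper's. The paper establishes the identity first for $f_1,f_2\in\SSS(W)$ by the same fibrewise Parseval$+$Fubini argument (where convergence is immediate from Proposition~\ref{lemma-for-conti-of-stft-sympl}~$(ii)$), then observes that the resulting $L^2$-bound extends everything to $L^2(W)$ by density. You instead work directly with $L^2$ data throughout: you first derive the diagonal $L^2$-bound $\|\VV_{\boldsymbol{\varphi}}f\|^2_{L^2(dv_gd\lambda)}=(f,I_{|\boldsymbol{\varphi}|^2}f)_{L^2}$ via fibrewise Plancherel and Tonelli, then use it (together with Cauchy--Schwarz) to justify the polarised Parseval and the Fubini exchange for general $f_1,f_2\in L^2(W)$. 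Both routes rest on the same two ingredients -- unitarity of $\mathcal{F}_\sigma$ and $I_{|\boldsymbol{\varphi}|^2}\in S(1,g)\subseteq L^\infty$ -- and are comparable in length. The density argument in the paper sidesteps the need to justify Fubini for rough $f_1,f_2$, while your direct argument avoids having to identify the density-extended operator with the GSTFT defined on $\SSS'(W)$; either is a reasonable way to discharge the technicalities.
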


\begin{proof} It is enough to show \eqref{equality-orth-for-fam} for $f_1,f_2\in\SSS(W)$ as the rest will follow by density (cf. Proposition \ref{lemma-for-conti-of-stft-sympl}). Setting $\varphi_X:=\boldsymbol{\varphi}(X)$, $\psi_X:=\boldsymbol{\psi}(X)$, $X\in W$, Parseval's identity gives
\begin{align*}
(\VV_{\boldsymbol{\varphi}}f_1,\VV_{\boldsymbol{\psi}}f_2)_{L^2(W\times W, dv_gd\lambda)}&= \int_W\left(\int_W \mathcal{F}_{\sigma}(f_1\overline{\varphi_X})(\Xi) \overline{\mathcal{F}_{\sigma}(f_2\overline{\psi_X})(\Xi)}d\Xi\right)dv_g(X)\\
&=\int_W\left(\int_W f_1(Y)\overline{\varphi_X(Y)}\, \overline{f_2(Y)}\psi_X(Y)dY\right)dv_g(X)\\
&=(f_1,I_{\boldsymbol{\varphi}\overline{\boldsymbol{\psi}}}f_2)_{L^2(W)}.
\end{align*}
\end{proof}

In view of Lemma \ref{rem-about-part-of-unity} $(iii)$, we have the following consequence.

\begin{corollary}\label{cor-for-thestft-sympl}
Let $\boldsymbol{\varphi}\in\Conf_g(W;r)$. Then
\begin{equation}\label{shot-time-onl2-sympl}
\VV_{\boldsymbol{\varphi}}:L^2(W)\rightarrow L^2(W\times W, dv_gd\lambda)
\end{equation}
is continuous. If in addition $\boldsymbol{\varphi}$ is non-degenerate, then \eqref{shot-time-onl2-sympl} is a topological imbedding.
\end{corollary}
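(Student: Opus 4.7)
The plan is to derive both the continuity and the topological embedding property directly from the orthogonality relation \eqref{equality-orth-for-fam} proved just above, combined with the properties of $I_{\boldsymbol{\varphi}\overline{\boldsymbol{\psi}}}$ established in Lemma \ref{rem-about-part-of-unity} $(iii)$. No further analysis is needed; the corollary is really just the observation that the inner product identity \eqref{equality-orth-for-fam}, polarised back to norms, gives two-sided bounds.

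First I would apply \eqref{equality-orth-for-fam} with $f_1=f_2=f\in L^2(W)$ and $\boldsymbol{\psi}=\boldsymbol{\varphi}$ to obtain the identity
\[
\|\VV_{\boldsymbol{\varphi}}f\|^2_{L^2(W\times W,dv_gd\lambda)} = \int_W I_{|\boldsymbol{\varphi}|^2}(Y)\,|f(Y)|^2\,dY.
\]
By Lemma \ref{rem-about-part-of-unity} $(iii)$, $I_{|\boldsymbol{\varphi}|^2}\in S(1,g)$, in particular $I_{|\boldsymbol{\varphi}|^2}\in L^\infty(W)$, with the map $\boldsymbol{\varphi}\mapsto I_{|\boldsymbol{\varphi}|^2}$ continuous in the seminorms of $\Conf_g(W;r)$. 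Hence
\[
\|\VV_{\boldsymbol{\varphi}}f\|^2_{L^2(W\times W,dv_gd\lambda)} \leq \|I_{|\boldsymbol{\varphi}|^2}\|_{L^\infty(W)}\,\|f\|^2_{L^2(W)},
\]
which proves continuity of \eqref{shot-time-onl2-sympl}.

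Now suppose in addition that $\boldsymbol{\varphi}$ is non-degenerate, so that by definition
\[
c:=\inf_{Y\in W}I_{|\boldsymbol{\varphi}|^2}(Y)=\inf_{Y\in W}\int_W|\varphi_X(Y)|^2\,dv_g(X)>0.
\]
The same identity then yields
\[
\|\VV_{\boldsymbol{\varphi}}f\|^2_{L^2(W\times W,dv_gd\lambda)}\geq c\,\|f\|^2_{L^2(W)},\qquad f\in L^2(W).
\]
Combining the two bounds we obtain
\[
c\,\|f\|^2_{L^2(W)}\leq \|\VV_{\boldsymbol{\varphi}}f\|^2_{L^2(W\times W,dv_gd\lambda)}\leq \|I_{|\boldsymbol{\varphi}|^2}\|_{L^\infty(W)}\,\|f\|^2_{L^2(W)},
\]
so $\VV_{\boldsymbol{\varphi}}$ is injective, its image is closed in $L^2(W\times W,dv_gd\lambda)$, and the inverse from the image back to $L^2(W)$ is continuous; this is exactly the statement that $\VV_{\boldsymbol{\varphi}}$ is a topological embedding. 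There is no real obstacle here since both ingredients (the orthogonality relation and $1/I_{|\boldsymbol{\varphi}|^2}\in S(1,g)$ for non-degenerate $\boldsymbol{\varphi}$) have already been established; the only thing one needs to check is that the lower bound on $I_{|\boldsymbol{\varphi}|^2}$ built into the definition of non-degeneracy is precisely what converts the upper bound into a two-sided one.
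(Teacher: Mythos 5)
Your proposal is correct and follows exactly the route the paper indicates (the corollary is stated as an immediate consequence of the orthogonality relation together with Lemma \ref{rem-about-part-of-unity} $(iii)$): you specialise \eqref{equality-orth-for-fam} to $f_1=f_2=f$, $\boldsymbol{\psi}=\boldsymbol{\varphi}$, use $I_{|\boldsymbol{\varphi}|^2}\in S(1,g)\subseteq L^\infty(W)$ for the upper bound, and use the definition of non-degeneracy for the lower bound, yielding the two-sided estimate that encodes both continuity and the embedding property.
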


In view of Proposition \ref{lemma-for-conti-of-stft-sympl} $(i)$, for any $\boldsymbol{\varphi}\in\Conf_g(W;r)$, it holds that
$$
\VV_{\boldsymbol{\varphi}}:\SSS'(W)\rightarrow \lim_{\substack{\longrightarrow\\ s\rightarrow \infty}} L^1_{(1+|\cdot|)^{-s}}(W\times W)\,\, \mbox{is well-defined and continuous.}
$$
For $\ds G\in \lim_{\substack{\longrightarrow\\ s\rightarrow \infty}} L^1_{(1+|\cdot|)^{-s}}(W\times W)$ and $\boldsymbol{\varphi}\in\Conf_g(W;r)$, we define $\VV^*_{\boldsymbol{\varphi}}G\in \SSS'(W)$ by
$$
\langle \VV^*_{\boldsymbol{\varphi}}G,\chi\rangle:=\int_{W\times W} G(X,\Xi)\overline{\VV_{\boldsymbol{\varphi}}\overline{\chi}(X,\Xi)}dv_g(X)d\Xi=\langle G,|g|^{1/2}\overline{\VV_{\boldsymbol{\varphi}}\overline{\chi}}\rangle,\;\; \chi\in\SSS(W),
$$
where the last dual pairing is in the sense of \eqref{top-iso-l1linf-spa-reg-lim-topplastr}. The right-hand side makes sense in view of Proposition \ref{lemma-for-conti-of-stft-sympl} $(ii)$ and \eqref{ineq-for-metric-p-7}. Furthermore, Proposition \ref{lemma-for-conti-of-stft-sympl} $(ii)$ implies that $\VV^*_{\boldsymbol{\varphi}}G$ is indeed a well-defined element of $\SSS'(W)$. We collect the properties which we need  for $\VV^*_{\boldsymbol{\varphi}}$ in the following proposition.

\begin{proposition}\label{lemma-for-the-adj-of-stft-sympl}
Let $0<r\leq r_0$.
\begin{itemize}
\item[$(i)$] The bilinear mapping
    \begin{equation}\label{bil-map-adj-stft-gen-def}
    \lim_{\substack{\longrightarrow\\ s\rightarrow\infty}} L^1_{(1+|\cdot|)^{-s}}(W\times W) \times \Conf_g(W;r)\rightarrow \SSS'(W),\quad (G,\boldsymbol{\varphi})\mapsto \VV^*_{\boldsymbol{\varphi}}G,
    \end{equation}
    is hypocontinuous. Furthermore,
    \begin{equation}\label{compo-stft-with-adj-symp}
    \VV^*_{\boldsymbol{\varphi}}\VV_{\boldsymbol{\psi}}f=I_{\boldsymbol{\varphi} \overline{\boldsymbol{\psi}}}f,\quad f\in \SSS'(W),\, \boldsymbol{\varphi},\boldsymbol{\psi}\in\Conf_g(W;r).
    \end{equation}
\item[$(ii)$] Let $\boldsymbol{\varphi}\in\Conf_g(W;r)$. For each $G\in L^2(W\times W, dv_gd\lambda)$, the mapping
\begin{equation}\label{mapp-for-theadj-ofthe-stft-sympl}
W\times W\rightarrow L^2(W),\quad (X,\Xi)\mapsto G(X,\Xi) e^{2\pi i[\Xi,\cdot]} \boldsymbol{\varphi}(X),
\end{equation}
is strongly measurable and Pettis integrable on $W\times W$ with respect to $dv_gd\lambda$ and
$$
\VV^*_{\boldsymbol{\varphi}}G=\int_{W\times W} e^{2\pi i[\Xi,\cdot]}G(X,\Xi)\boldsymbol{\varphi}(X) dv_g(X)d\Xi.
$$
Furthermore, the mapping
$$
L^2(W\times W,dv_gd\lambda)\rightarrow L^2(W),\quad G\mapsto \VV^*_{\boldsymbol{\varphi}}G,
$$
is the adjoint to \eqref{shot-time-onl2-sympl}.
\item[$(iii)$] The bilinear mapping \eqref{bil-map-adj-stft-gen-def} restricts to a well-defined and continuous bilinear mapping
$$
\lim_{\substack{\longleftarrow\\ s\rightarrow\infty}} L^1_{(1+|\cdot|)^s}(W\times W) \times \Conf_g(W;r)\rightarrow \SSS(W),\quad (G,\boldsymbol{\varphi})\mapsto \VV^*_{\boldsymbol{\varphi}}G.
$$
\item[$(iv)$] Assume that $g$ is smooth and satisfies the following condition: for every $S\in W$ there exists a $g$-admissible weight $M_S$ such that the function $X\mapsto g_X(S)$ belongs to $S(M_S,g)$. If $\boldsymbol{\varphi}\in \Conf_g(W;r)$ satisfies the condition in Proposition \ref{lemma-for-conti-of-stft-sympl} $(iii)$, then $\VV^*_{\boldsymbol{\varphi}}$ uniquely extends to a continuous mapping $\VV^*_{\boldsymbol{\varphi}}:\SSS'(W\times W)\rightarrow \SSS'(W)$.
\end{itemize}
\end{proposition}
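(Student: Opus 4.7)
For part $(i)$, the plan is to read every assertion off the defining identity
\[
\langle \VV^*_{\boldsymbol{\varphi}}G,\chi\rangle=\langle G,|g|^{1/2}\overline{\VV_{\boldsymbol{\varphi}}\overline{\chi}}\rangle,\qquad \chi\in\SSS(W),
\]
together with the mapping properties of $\VV_{\boldsymbol{\varphi}}$ from Proposition \ref{lemma-for-conti-of-stft-sympl}. Fixing $\boldsymbol{\varphi}$, part $(ii)$ of that proposition together with \eqref{ineq-for-metric-p-7} shows that $\chi\mapsto |g|^{1/2}\overline{\VV_{\boldsymbol{\varphi}}\overline{\chi}}$ maps $\SSS(W)$ continuously into $\lim_{\leftarrow}L^{\infty}_{(1+|\cdot|)^s}(W\times W)$, so the duality \eqref{top-iso-l1linf-spa-reg-lim-topplastr} yields continuity of $G\mapsto \VV^*_{\boldsymbol{\varphi}}G$; fixing $G$, the same proposition yields continuity in $\boldsymbol{\varphi}$. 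Since $\Conf_g(W;r)$ is Fr\'echet and $\lim_{\rightarrow}L^1_{(1+|\cdot|)^{-s}}(W\times W)$ is barrelled as a regular countable inductive limit of Banach spaces, separate continuity upgrades to hypocontinuity. For the identity $\VV^*_{\boldsymbol{\varphi}}\VV_{\boldsymbol{\psi}}f=I_{\boldsymbol{\varphi}\overline{\boldsymbol{\psi}}}f$, I would first verify it for $f,\chi\in\SSS(W)$: the orthogonality relation \eqref{equality-orth-for-fam} applied with $(f_1,f_2)=(f,\overline{\chi})$ and windows $(\boldsymbol{\psi},\boldsymbol{\varphi})$ gives
\[
(\VV_{\boldsymbol{\psi}}f,\VV_{\boldsymbol{\varphi}}\overline{\chi})_{L^2(W\times W,dv_gd\lambda)}=(f,I_{\boldsymbol{\psi}\overline{\boldsymbol{\varphi}}}\overline{\chi})_{L^2(W)},
\]
and the observation $\overline{I_{\boldsymbol{\psi}\overline{\boldsymbol{\varphi}}}}=I_{\boldsymbol{\varphi}\overline{\boldsymbol{\psi}}}$ rewrites the right-hand side as $\langle I_{\boldsymbol{\varphi}\overline{\boldsymbol{\psi}}}f,\chi\rangle$. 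The identity extends from $\SSS(W)$ to all of $\SSS'(W)$ by continuity of both sides in $f$.

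For part $(ii)$, strong measurability of $(X,\Xi)\mapsto G(X,\Xi)e^{2\pi i[\Xi,\cdot]}\boldsymbol{\varphi}(X)$ into $L^2(W)$ is automatic from the strong measurability of $\boldsymbol{\varphi}$, the continuity of $(X,\Xi)\mapsto e^{2\pi i[\Xi,\cdot]}$ into the multiplier algebra of $\SSS(W)$, and the measurability of the scalar $G$. Pettis integrability against $\chi\in L^2(W)$ reduces to showing $(X,\Xi)\mapsto G(X,\Xi)\overline{\VV_{\boldsymbol{\varphi}}\overline{\chi}(X,\Xi)}\in L^1(W\times W,dv_gd\lambda)$, which is immediate from Cauchy--Schwarz and Corollary \ref{cor-for-thestft-sympl}. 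The resulting vector-valued integral produces an element of $L^2(W)$ whose pairing with any $\chi\in\SSS(W)$ coincides with the original $\langle\VV^*_{\boldsymbol{\varphi}}G,\chi\rangle$, establishing both the integral representation and the Hilbert-space adjoint relation in one shot.

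For part $(iii)$, I would use the absolutely convergent pointwise formula $\VV^*_{\boldsymbol{\varphi}}G(Y)=\int_{W\times W}e^{2\pi i[\Xi,Y]}G(X,\Xi)\varphi_X(Y)dv_g(X)d\Xi$, valid for $G\in L^1_{(1+|\cdot|)^s}(W\times W)$ with $s$ sufficiently large, and bound its Schwartz seminorms via integration by parts in $\Xi$ (producing arbitrary polynomial decay in $Y$) and standard differentiation in $Y$ controlled by the $\Conf_g$-seminorms of $\boldsymbol{\varphi}$. The factors $g_X(T)^{1/2}$ arising from differentiating $\varphi_X$ are absorbed by \eqref{ineq-for-metric-p-1} into polynomial factors in $X$, and one then chooses the weight $s$ of $G$ large enough so that, using \eqref{ineq-for-metric-p-3-1} and \eqref{ineq-for-metric-p-7}, the whole integral converges with the required decay and all the $(G,\boldsymbol{\varphi})$-dependence is manifestly continuous.

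For part $(iv)$, the most delicate part, the defining formula interprets $\VV^*_{\boldsymbol{\varphi}}G$ through the pairing of $G$ with $|g|^{1/2}\overline{\VV_{\boldsymbol{\varphi}}\overline{\chi}}$, so the extension to $\SSS'(W\times W)$ reduces to two points: (a) that $\VV_{\boldsymbol{\varphi}}:\SSS(W)\to\SSS(W\times W)$ be continuous, which is precisely Proposition \ref{lemma-for-conti-of-stft-sympl}$(iii)$ applied to our $\boldsymbol{\varphi}$, and (b) that $|g|^{\pm 1/2}$ be a multiplier of $\SSS(W\times W)$. The main obstacle is (b), and the smoothness/admissibility hypothesis on $g$ is tailored precisely for it: evaluating $|g_X|$ in a fixed symplectic basis $\{E_j\}$ exhibits it as a polynomial in the Gram entries $g_X(E_j,E_k)$; polarisation writes these entries as linear combinations of $g_X(E_j)$, $g_X(E_k)$ and $g_X(E_j+E_k)$, each of which lies in some $S(M,g)$ by hypothesis, so $|g|\in S(M_0,g)$ for a $g$-admissible weight $M_0$; combined with the two-sided polynomial bounds \eqref{ineq-for-metric-p-7} and smoothness of $t\mapsto t^{\pm 1/2}$ on $(0,\infty)$, this forces $|g|^{\pm 1/2}\in S(M_0^{\pm 1/2},g)\subseteq \OO_{\mathcal{M}}(W)$, and viewing these as functions on $W\times W$ independent of the second factor gives the required multipliers of $\SSS(W\times W)$. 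The transpose of the continuous antilinear map $\chi\mapsto|g|^{1/2}\overline{\VV_{\boldsymbol{\varphi}}\overline{\chi}}$ then provides the extension $\SSS'(W\times W)\to\SSS'(W)$, and uniqueness follows from density of $\SSS(W\times W)$ in $\SSS'(W\times W)$.
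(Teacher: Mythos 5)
Your proposal follows essentially the same route as the paper for all four parts. In $(i)$ the paper likewise derives separate continuity from Proposition \ref{lemma-for-conti-of-stft-sympl} and the duality \eqref{top-iso-l1linf-spa-reg-lim-topplastr}, upgrades to hypocontinuity via barrelledness, and obtains \eqref{compo-stft-with-adj-symp} on $\SSS(W)$ from \eqref{equality-orth-for-fam} and then extends by density; your pairing computation is just a more explicit version of that. Your $(ii)$ matches the paper's (brief) argument, and for $(iv)$ you correctly identify that the whole point is to upgrade $\chi\mapsto|g|^{1/2}\overline{\VV_{\boldsymbol{\varphi}}\overline{\chi}}$ to a bounded map of $\SSS(W)$ into $\SSS(W\times W)$ and then transpose, which is exactly what the paper does.

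Two places need correction. In $(iii)$, the step ``integration by parts in $\Xi$ (producing arbitrary polynomial decay in $Y$)'' is not available: $G$ lies only in $\lim_{\leftarrow}L^1_{(1+|\cdot|)^s}(W\times W)$, so it has no $\Xi$-regularity, and shifting $\partial_{\Xi}$ from the phase onto $G$ requires exactly the smoothness one does not have. In the paper the $Y$-decay comes entirely from the confinement of $\varphi_X$: the factor $(1+g^{\sigma}_X(Y-U_{X,r}))^{-l/2}$ from the $\Conf_g$-seminorms is converted via \eqref{est-for-met-at-poi-sing} into decay in $(1+g_0(Y))$, at the price of a polynomial factor in $X$ that the $L^1$-weight of $G$ absorbs. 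You do invoke the right tools (\eqref{ineq-for-metric-p-1}, \eqref{ineq-for-metric-p-3-1}, \eqref{ineq-for-metric-p-7}), so the estimate can be repaired, but the stated mechanism for $Y$-decay is wrong. In $(iv)$, the assertion $|g|^{\pm1/2}\in S(M_0^{\pm1/2},g)$ is an overclaim: the Fa\`a di Bruno expansion produces negative powers $|g|^{-(2l-1)/2}$, and dominating them by $M_0^{-(2l-1)/2}$ would require $M_0\lesssim|g|$, which does not follow from the polarisation argument. What one actually proves (and all one needs) is the weaker bound
\[
\Bigl|\Bigl(\prod_{j=1}^k\partial_{T_j}\Bigr)|g_X|^{1/2}\Bigr|\leq C_k(1+g^{\sigma}_0(X))^{P(k)}\prod_{j=1}^kg_X(T_j)^{1/2},
\]
with the negative powers of $|g|$ controlled by the two-sided polynomial bounds \eqref{ineq-for-metric-p-7}/\eqref{inequ-for-met-sim-vol-der}; this already gives $|g|^{1/2}\in\OO_{\mathcal{M}}(W)$, so the conclusion of your argument survives but the claimed symbol class should be dropped.
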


\begin{proof} We first address $(i)$. Proposition \ref{lemma-for-conti-of-stft-sympl} $(ii)$ verifies that \eqref{bil-map-adj-stft-gen-def} is separately continuous. Since both spaces in the domain of \eqref{bil-map-adj-stft-gen-def} are barrelled, \cite[Theorem 5, p. 159]{kothe2} verifies that \eqref{bil-map-adj-stft-gen-def} is hypocontinuous. The validity of \eqref{compo-stft-with-adj-symp} for $f\in\SSS(W)$ is a consequence of \eqref{equality-orth-for-fam} and the general case follows from the density of $\SSS(W)$ in $\SSS'(W)$ and Proposition \ref{lemma-for-conti-of-stft-sympl} $(i)$. We turn our attention to $(ii)$. The fact that \eqref{mapp-for-theadj-ofthe-stft-sympl} is strongly measurable follows from the comments before Remark \ref{rem-for-euc-metric-stand}. The rest of $(ii)$ follows from Corollary \ref{cor-for-thestft-sympl} and a straightforward computation. To prove $(iii)$, let $k,l\in\NN$ be arbitrary but fixed. For $\boldsymbol{\varphi}\in \Conf_g(W;r)$, setting $\varphi_X:=\boldsymbol{\varphi}(X)$, $X\in W$, and employing the same technique as in the proof of Proposition \ref{lemma-for-conti-of-stft-sympl} $(i)$, one can show that
\begin{multline*}
\left|\left(\prod_{j=1}^k\partial_{T_j;Y}\right)(e^{2\pi i [\Xi,Y]}\varphi_X(Y))\right|\\
\leq \frac{C_1\|\varphi_X\|^{(k+l)}_{g_X,U_{X,r}} (1+g^{\sigma}_0(\Xi))^{k/2}(1+g^{\sigma}_0(X))^{kN_0/2} \prod_{j=1}^k g_0(T_j)^{1/2}} {(1+g^{\sigma}_X(Y-U_{X,r}))^{l/2}}.
\end{multline*}
Let $\ds G\in \lim_{\substack{\longleftarrow\\ s\rightarrow\infty}} L^1_{(1+|\cdot|)^s}(W\times W)$. Employing \eqref{inequ-for-met-sim-vol-der} and \eqref{est-for-met-at-poi-sing}, we deduce
\begin{multline*}
\frac{(1+g_0(Y))^{l/2}\left|\left(\prod_{j=1}^k\partial_{T_j}\right) \VV^*_{\boldsymbol{\varphi}}G(Y)\right|}{\prod_{j=1}^k g_0(T_j)^{1/2}}\\
\leq C_2\|\boldsymbol{\varphi}\|^{(k+l)}_{g,r}\int_{W\times W} |G(X,\Xi)| (1+g^{\sigma}_0(\Xi))^{k/2}(1+g^{\sigma}_0(X))^{(kN_0+l(N_0+1)+2nN_0)/2}dXd\Xi,
\end{multline*}
which implies the claim in $(iii)$.\\
\indent We now address $(iv)$. For every $S_1,S_2\in W$, it holds that $g_X(S_1,S_2)=(g_X(S_1+S_2)-g_X(S_1)-g_X(S_2))/2$. Let $E_j$, $j=1,\ldots,2n$, be a symplectic basis for $W$. For $k\in\NN$, the condition in $(iv)$ gives the following bound for some $m_0\geq 0$:
\begin{align*}
|\partial^k_Tg_X(E_j,E_l)|&\leq C'g_X(T)^{k/2}(M_{E_j+E_l}(X)+M_{E_j}(X)+M_{E_l}(X))\\
&\leq C''g_X(T)^{k/2}(1+g^{\sigma}_0(X))^{m_0},
\end{align*}
for all $X\in W$, $j,l=1,\ldots,2n$. Consequently,
$$
|\partial^k_T|g_X||\leq \tilde{C}'g_X(T)^{k/2}(1+g^{\sigma}_0(X))^{2nm_0},\quad  X,T\in W.
$$
The Fa\'a di Bruno formula (see \cite[Section 4.3.1]{lernerB}) applied to the composition of $X\mapsto |g_X|$ with $t\mapsto \sqrt{t}$ together with \eqref{inequ-for-met-sim-vol-der} yield (for $k\in\ZZ_+$)
\begin{align*}
|\partial^k_T(|g_X|^{1/2})|&\leq C'_1\sum_{l=1}^k|g_X|^{-(2l-1)/2}\sum_{\substack{k_1+\ldots +k_l=k\\ k_j\geq 1}}\prod_{j=1}^l |\partial^{k_j}_T|g_X||\\
&\leq C'_2 g_X(T)^{k/2}(1+g^{\sigma}_0(X))^{2nm_0k}\sum_{l=1}^k(1+g^{\sigma}_0(X))^{(2l-1)nN_0}\\
&\leq C'_3 g_X(T)^{k/2}(1+g^{\sigma}_0(X))^{2nm_0k+2knN_0}.
\end{align*}
Now, \cite[Lemma 4.2.3, p. 302]{lernerB} verifies the following bound for $k\in\ZZ_+$:
$$
\left|\left(\prod_{j=1}^k\partial_{T_j}\right)|g_X|^{1/2}\right|\leq C'_3(1+g^{\sigma}_0(X))^{2nm_0k+2knN_0}\prod_{j=1}^kg_X(T_j)^{1/2},\;\; X,T_1,\ldots,T_k\in W.
$$
Let $B$ be a bounded subset of $\SSS(W)$. The above estimate together with Proposition \ref{lemma-for-conti-of-stft-sympl} $(iii)$ imply that $B_1=\{|g|^{1/2} \overline{\VV_{\boldsymbol{\varphi}}\overline{\chi}}\,|\, \chi\in B\}$ is a bounded subset of $\SSS(W\times W)$. For any $G\in \SSS(W\times W)$, we have $\sup_{\chi\in B}|\langle \VV^*_{\boldsymbol{\varphi}}G,\chi\rangle|=\sup_{\phi\in B_1}|\langle G, \phi\rangle|$, which proves the claim in $(iv)$.
\end{proof}

\begin{remark}
Given any H\"ormander metric $g$, one can always find a smooth H\"ormander metric $\tilde{g}$ which satisfies the condition in Proposition \ref{lemma-for-the-adj-of-stft-sympl} $(iv)$ and is equivalent to $g$ (i.e., there exists $C\geq 1$ such that $C^{-1}g_X(T)\leq \tilde{g}_X(T)\leq Cg_X(T)$, $X,T\in W$). In fact, the smooth H\"ormander metric $\tilde{g}$ constructed in Example \ref{exi-of-good-par-off} $(ii)$ is equivalent to $g$ and \eqref{ine-for-met-der-on-all-var} proves that for each $S\in W\backslash\{0\}$, the function $X\mapsto \tilde{g}_X(S)$ belongs to $S(M_S,\tilde{g})$ with $M_S(X):=\tilde{g}_X(S)$, $X\in W$; clearly $M_S$ is $\tilde{g}$-admissible.
\end{remark}

\begin{remark}
The H\"ormander metrics of the commonly used calculi  almost always satisfy the assumption in Proposition \ref{lemma-for-the-adj-of-stft-sympl} $(iv)$ (like the Shubin calculus, the SG-calculus, the H\"ormander $S_{\rho,\delta}$-calculus etc.). In fact, if $g_{x,\xi}=f(x,\xi)^{-2}|dx|^2+F(x,\xi)^{-2}|d\xi|^2$ is a H\"ormander metric on $\RR^{2n}$ such that $f$ and $F$ are smooth, positive and $f\in S(f,g)$ and $F\in S(F,g)$, then $g$ satisfies the assumption in Proposition \ref{lemma-for-the-adj-of-stft-sympl} $(iv)$.
\end{remark}

\section{Geometric modulation spaces}\label{gms}

In this section, we define a class of generalised modulation spaces with the help of the geometric short-time Fourier transform which we introduced in Subsection \ref{GSTFT}. Besides being of independent interest, they will play a crucial role in the proof of the main result.\\
\indent We start by introducing the following class of admissible weights. A positive measurable function $\eta:W\times W\rightarrow (0,\infty)$ is called a \textit{uniformly admissible weight with respect to the metric} $g$ if there are constants $\tilde{C}\geq 1$, $\tilde{r}>0$ and $\tilde{\tau}\geq 0$ such that
\begin{gather}
g_X(X-Y)\leq\tilde{r}^2\Rightarrow \tilde{C}^{-1}\eta(Y,\Xi)\leq \eta(X,\Xi)\leq \tilde{C}\eta(Y,\Xi),\quad X,Y,\Xi\in W,\label{slow-variation-with-resp-to-the-metric-uni}\\
\eta(X+X',Y)\leq \tilde{C}\eta(X,Y)(1+g^{\sigma}_X(X'))^{\tilde{\tau}},\quad X,X',Y\in W,\label{tem-with-res-to-first}\\
\eta(X,Y+Y')\leq \tilde{C}\eta(X,Y)(1+g^{\sigma}_X(Y'))^{\tilde{\tau}},\quad X,Y,Y'\in W.\label{tem-with-res-to-second}
\end{gather}
We call any number $\tilde{r}>0$ for which \eqref{slow-variation-with-resp-to-the-metric-uni} holds true a \textit{slow variation constant} for $\eta$ and we call any number $\tilde{\tau}\geq0$ for which \eqref{tem-with-res-to-first} and \eqref{tem-with-res-to-second} hold true a \textit{temperance constant} for $\eta$. In such case, we call the pair $(\tilde{r},\tilde{\tau})$ \textit{admissibility constants} for $\eta$.

\begin{remark}\label{rem-for-adm-func-for-fromad}
The reason for this notation is the following. If $M$ is a $g$-admissible weight with admissibility constants $r$ and $N$, then the function $(X,Y)\mapsto M(X)$ is uniformly admissible with respect to $g$ with admissibility constants $(r,N)$.\\
\indent Notice that \eqref{ineq-for-metric-p-5} and \eqref{ineq-for-metric-p-6} imply that the functions $(X,Y)\mapsto |g_X|$ and $(X,Y)\mapsto |g^{\sigma}_X|$ are uniformly admissible with respect to $g$ with admissibility constants $(r_0,2nN_0)$.
\end{remark}

\begin{remark}\label{uni-adm-for-stand-euc-metr}
If $g$ is the standard Euclidean metric on $\RR^{2n}$, then $\eta:\RR^{4n}\rightarrow (0,\infty)$ is uniformly admissible for $g$ if and only if $\eta$ is moderate with respect to the Beurling weight $(1+|\cdot|)^{\tau}$, for some $\tau\geq 0$ (cf. \cite[Definition 11.1.1, p. 217]{Gr1}).
\end{remark}

\begin{lemma}\label{admisibil-weig-mul}
Let $\eta$ and $\eta_1$ be two uniformly admissible weights with respect to $g$ with admissibility constants $(\tilde{r},\tilde{\tau})$ and $(\tilde{r}_1,\tilde{\tau}_1)$, respectively. Then $\eta\eta_1$ is uniformly admissible weight with respect to $g$ with admissibility constants $(\min\{\tilde{r},\tilde{r}_1\},\tilde{\tau}+\tilde{\tau}_1)$. Furthermore, $\eta^s$, $s\in\RR$, is also uniformly admissible with respect to $g$ with admissibility constants $(\tilde{r}, s\tilde{\tau})$ when $s\geq 0$ and $(\tilde{r}, |s|\tilde{\tau}(N_0+1))$ when $s<0$.
\end{lemma}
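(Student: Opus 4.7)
\medskip

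The proof is a routine manipulation of inequalities once the right ``refined temperance'' for $g^\sigma$ is on the table, so I would organize it around three short blocks, saving the one subtle point for last.

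\textbf{Step 1 (Product $\eta\eta_1$).} Slow variation is immediate: if $g_X(X-Y)\le \min\{\tilde r,\tilde r_1\}^2$ then both slow-variation inequalities for $\eta$ and $\eta_1$ hold simultaneously, so multiplying them gives $(\eta\eta_1(X,\Xi)/\eta\eta_1(Y,\Xi))^{\pm 1}\le \tilde C\tilde C_1$, i.e.\ slow variation of $\eta\eta_1$ with constant $\min\{\tilde r,\tilde r_1\}$. Temperance in each of the two variables is obtained by simply multiplying \eqref{tem-with-res-to-first} (respectively \eqref{tem-with-res-to-second}) for $\eta$ by the corresponding inequality for $\eta_1$; the exponents on $(1+g^\sigma_X(X'))$ and $(1+g^\sigma_X(Y'))$ add to $\tilde\tau+\tilde\tau_1$. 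This gives admissibility constants $(\min\{\tilde r,\tilde r_1\},\tilde\tau+\tilde\tau_1)$.

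\textbf{Step 2 (Positive power $\eta^s$, $s\ge 0$).} Raising the slow-variation bound $(\eta(X,\Xi)/\eta(Y,\Xi))^{\pm 1}\le \tilde C$ to the power $s$ preserves both directions; likewise raising \eqref{tem-with-res-to-first} and \eqref{tem-with-res-to-second} to the $s$-th power gives the two temperance inequalities for $\eta^s$ with exponent $s\tilde\tau$. This disposes of the admissibility constants $(\tilde r, s\tilde\tau)$.

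\textbf{Step 3 (Negative power $\eta^s$, $s<0$, the main point).} Slow variation is again immediate, since raising the two-sided bound $(\eta(X,\Xi)/\eta(Y,\Xi))^{\pm 1}\le \tilde C$ to $|s|$ simply swaps the two sides. For temperance, write $s=-|s|$; I need to bound $\eta(X,Y)/\eta(X+X',Y)$ from above by a power of $1+g^\sigma_X(X')$. Applying \eqref{tem-with-res-to-first} to the point $(X+X',Y)$ with shift $-X'$ (and using that $g^\sigma_\bullet(\cdot)$ is a quadratic form, hence even) yields
\[
\eta(X,Y)=\eta((X+X')+(-X'),Y)\le \tilde C\,\eta(X+X',Y)\bigl(1+g^\sigma_{X+X'}(X')\bigr)^{\tilde\tau}.
\]
The crux is now the refined temperance inequality
\begin{equation}\label{refined-tempera-plan}
1+g^\sigma_{X+X'}(X')\le (1+C_0)\bigl(1+g^\sigma_X(X')\bigr)^{N_0+1},\quad X,X'\in W,
\end{equation}
which I will derive directly from temperance of $g$: the inequality $g_X(S)/g_Y(S)\le C_0(1+g^\sigma_X(X-Y))^{N_0}$ gives $1/g_Y(S)\le C_0(1+g^\sigma_X(X-Y))^{N_0}/g_X(S)$, and taking the supremum $\sup_S [T,S]^2/g_Y(S)$ yields $g^\sigma_Y(T)\le C_0(1+g^\sigma_X(X-Y))^{N_0}g^\sigma_X(T)$ for every $T\in W$. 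Specializing to $T=X-Y$ with $Y=X+X'$ turns this into $g^\sigma_{X+X'}(X')\le C_0(1+g^\sigma_X(X'))^{N_0}\,g^\sigma_X(X')$, which together with $g^\sigma_X(X')\le 1+g^\sigma_X(X')$ immediately gives \eqref{refined-tempera-plan}. Substituting \eqref{refined-tempera-plan} into the previous display and raising to the power $|s|$ produces
\[
\frac{\eta^s(X+X',Y)}{\eta^s(X,Y)}=\left(\frac{\eta(X,Y)}{\eta(X+X',Y)}\right)^{|s|}\le \tilde C'\bigl(1+g^\sigma_X(X')\bigr)^{|s|\tilde\tau(N_0+1)},
\]
which is temperance of $\eta^w$ in the first variable with exponent $|s|\tilde\tau(N_0+1)$. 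The second variable is handled identically via \eqref{tem-with-res-to-second} (the analogue of \eqref{refined-tempera-plan} is the same inequality applied to the variable $Y$), completing the proof.

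The only non-bookkeeping point is \eqref{refined-tempera-plan}; everything else is elementary. The derivation above avoids any detour through the (true but nontrivial) statement that $g^\sigma$ is itself a H\"ormander metric, which is the step one might be tempted to invoke and where the constants get harder to track.
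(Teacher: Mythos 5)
Your proof is correct and follows the same route as the paper: the crux in both is the chain $\eta(X,Y)\le\tilde C\,\eta(X+X',Y)(1+g^\sigma_{X+X'}(X'))^{\tilde\tau}$ followed by the refined temperance bound $g^\sigma_{X+X'}(X')\le C_0\,g^\sigma_X(X')(1+g^\sigma_X(X'))^{N_0}$, which the paper states without derivation and you rightly derive directly from the supremum formula for $g^\sigma$. One small imprecision: for the second variable you claim an analogue of \eqref{refined-tempera-plan} is needed, but since \eqref{tem-with-res-to-second} measures $Y'$ with $g^\sigma_X$ (not $g^\sigma_Y$), writing $Y=(Y+Y')+(-Y')$ and applying \eqref{tem-with-res-to-second} directly already gives $\eta(X,Y)\le\tilde C\,\eta(X,Y+Y')(1+g^\sigma_X(Y'))^{\tilde\tau}$ with the unimproved exponent $\tilde\tau$, so the $(N_0+1)$ factor is only forced in the first variable; since the lemma uses a single temperance constant for both, the stated $|s|\tilde\tau(N_0+1)$ remains a valid (if not tight) choice.
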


\begin{proof} The only non-trivial part is to verify \eqref{tem-with-res-to-first} for $\eta^{s}$, $s<0$, with the constant $|s|\tilde{\tau}(N_0+1)$ in place of $\tilde{\tau}$. This follows from the following chain of inequalities:
\begin{align*}
\eta(X,Y)&\leq \tilde{C}\eta(X+X',Y)(1+g^{\sigma}_{X+X'}(X'))^{\tilde{\tau}}\\
&\leq \tilde{C}\eta(X+X',Y)(1+C_0g^{\sigma}_X(X')(1+g^{\sigma}_X(X'))^{N_0})^{\tilde{\tau}}\\
&\leq \tilde{C}C_0^{\tilde{\tau}}\eta(X+X',Y)(1+g^{\sigma}_X(X'))^{\tilde{\tau}(N_0+1)}.
\end{align*}
\end{proof}

Let $w$ be a positive measurable function on $W\times W$ which satisfies \eqref{pol-b-d-wei-s}. For $1\leq p,q\leq\infty$, we denote by $L^{p,q}_w(W\times W, dv_gd\lambda)$ the Banach space of all measurable functions $f$ on $W\times W$ which satisfy
$$
\left(\int_W\left(\int_W |f(X,\Xi)|^p w(X,\Xi)^p dv_g(X)\right)^{q/p}d\Xi\right)^{1/q}<\infty
$$
(with the obvious modifications when $p=\infty$ or $q=\infty$). Similarly, we denote by $\widetilde{L}^{p,q}_w(W\times W, dv_gd\lambda)$ the Banach space of all measurable functions $f$ on $W\times W$ which satisfy
$$
\left(\int_W\left(\int_W |f(X,\Xi)|^q w(X,\Xi)^q d\Xi\right)^{p/q}dv_g(X)\right)^{1/p}<\infty
$$
(again, with the obvious modifications when $p=\infty$ or $q=\infty$). When $g$ is the Euclidean metric, we will simply denote these spaces by $L^{p,q}_w(W\times W)$ and $\widetilde{L}^{p,q}_w(W\times W)$, respectively. Clearly,
$$
L^{p,p}_w(W\times W,dv_gd\lambda)=\widetilde{L}^{p,p}_w(W\times W,dv_gd\lambda)= L^p_w(W\times W,dv_gd\lambda),\quad p\in[1,\infty].
$$
We have the following continuous inclusions for all $1\leq p,q\leq \infty$:
\begin{gather*}
\lim_{\substack{\longleftarrow \\ s\rightarrow\infty}} L^{\infty}_{(1+|\cdot|)^s}(W\times W)\subseteq L^{p,q}_w(W\times W,dv_g d\lambda)\subseteq \lim_{\substack{\longrightarrow \\ s\rightarrow\infty}} L^1_{(1+|\cdot|)^{-s}}(W\times W),\\
\lim_{\substack{\longleftarrow \\ s\rightarrow\infty}} L^{\infty}_{(1+|\cdot|)^s}(W\times W)\subseteq \widetilde{L}^{p,q}_w(W\times W,dv_g d\lambda)\subseteq \lim_{\substack{\longrightarrow \\ s\rightarrow\infty}} L^1_{(1+|\cdot|)^{-s}}(W\times W).
\end{gather*}
When $p<\infty$ and $q<\infty$, the inclusions are also dense. If $\eta$ is uniformly admissible weight with respect to $g$, than (cf. Lemma \ref{admisibil-weig-mul}),
$$
\eta(X,Y)^{\pm 1}\leq C\eta(0,0)^{\pm1}(1+g^{\sigma}_0(X))^N(1+g^{\sigma}_0(Y))^N,\quad X,Y\in W,
$$
for some $C,N>0$; consequently, it satisfies \eqref{pol-b-d-wei-s}.\\
\indent The generalised modulation spaces which we are going to define will be modelled on $\widetilde{L}^{p,q}_{\eta}(W\times W,dv_gd\lambda)$ rather than on $L^{p,q}_{\eta}(W\times W,dv_gd\lambda)$. As it turns out, the former spaces are better suited as co-domains and domains for the GSTFT and its adjoint.\\
\indent Anticipating what follows, we prove the following result.

\begin{theorem}\label{com-adj-stft-sympl1}
Let $\eta$ be a uniformly admissible weight with respect to $g$ with slow variation constant $\tilde{r}$ and let $0<r'_0\leq\min\{r_0,\tilde{r}\}$. For any $p,q\in[1,\infty]$ and any $\boldsymbol{\varphi},\boldsymbol{\psi}\in\Conf_g(W;r'_0)$, the mapping
$$
\widetilde{L}^{p,q}_{\eta}(W\times W,dv_gd\lambda)\rightarrow \widetilde{L}^{p,q}_{\eta}(W\times W,dv_gd\lambda),\quad G\mapsto \VV_{\boldsymbol{\psi}}\VV^*_{\boldsymbol{\varphi}} G,
$$
is well-defined and continuous. Furthermore, for each $p,q\in[1,\infty]$ there exist $C\geq 1$ and $k\in\ZZ_+$ such that
\begin{equation}\label{uni-bound-for-cont-mml}
\|\VV_{\boldsymbol{\psi}}\VV^*_{\boldsymbol{\varphi}} G\|_{\widetilde{L}^{p,q}_{\eta}(W\times W,dv_gd\lambda)}\leq C \|\boldsymbol{\psi}\|^{(k)}_{g,r'_0}\|\boldsymbol{\varphi}\|^{(k)}_{g,r'_0} \|G\|_{\widetilde{L}^{p,q}_{\eta}(W\times W,dv_gd\lambda)},
\end{equation}
for all $G\in \widetilde{L}^{p,q}_{\eta}(W\times W,dv_gd\lambda)$, $\boldsymbol{\varphi}, \boldsymbol{\psi}\in \Conf_g(W;r'_0)$.
\end{theorem}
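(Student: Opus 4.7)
The plan is to realize $\VV_{\boldsymbol\psi}\VV^*_{\boldsymbol\varphi}$ as an integral operator with a rapidly decreasing kernel adapted to the geometry of $g$, and then to conclude by a Schur-type argument on the mixed-norm space $\widetilde L^{p,q}_\eta(W\times W, dv_g d\lambda)$. Writing $\varphi_X := \boldsymbol\varphi(X)$ and $\psi_Y := \boldsymbol\psi(Y)$, a direct computation using Proposition~\ref{lemma-for-the-adj-of-stft-sympl}~$(ii)$ together with Fubini (whose justification will follow from the kernel bound below) gives
\[
\VV_{\boldsymbol\psi}\VV^*_{\boldsymbol\varphi} G(Y,H) = \int_{W\times W} K(X,Y;\Xi,H)\, G(X,\Xi)\, dv_g(X)\,d\Xi,
\]
with $K(X,Y;\Xi,H) := \mathcal{F}_\sigma(\varphi_X\overline{\psi_Y})(H-\Xi)$; the extension of this identity to arbitrary $G\in \widetilde L^{p,q}_\eta$ will be automatic once the kernel bound is in hand.

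The main technical step is the pointwise estimate: for every $N\in\NN$ there exist $C_N\ge 1$ and $k_N\in\ZZ_+$ such that
\[
|K(X,Y;\Xi,H)| \le C_N\|\boldsymbol\varphi\|^{(k_N)}_{g,r'_0}\|\boldsymbol\psi\|^{(k_N)}_{g,r'_0} (1+g^\sigma_X(X-Y))^{-N}(1+g^\sigma_X(H-\Xi))^{-N}.
\]
First I would extract the decay in $H-\Xi$ by repeated integration by parts in the symplectic Fourier transform along a vector $\theta=\theta(X,H,\Xi)\in W$ with $g_X(\theta)=1$ and $[H-\Xi,\theta]=g^\sigma_X(H-\Xi)^{1/2}$, mimicking the argument in the proof of Proposition~\ref{lemma-for-conti-of-stft-sympl}~$(ii)$; derivatives landing on $\psi_Y$ produce factors of $g_Y(\theta)^{1/2}$ which are controlled by $(1+g^\sigma_X(X-Y))^{N_0/2}$ via \eqref{ineq-for-metric-p-1}. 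The remaining $L^1_t$-integral, after inserting the essential-confinement bounds for $\varphi_X$ and $\psi_Y$, reduces to estimating
\[
\int_W (1+g^\sigma_X(t-U_{X,r'_0}))^{-M}(1+g^\sigma_Y(t-U_{Y,r'_0}))^{-M}\, dt \le C(1+g^\sigma_X(X-Y))^{-N}
\]
for $M=M(N)$ large; this is a Bony--Lerner-type inequality obtained by converting $g^\sigma_Y$ to $g^\sigma_X$ (via the symplectic dual version of \eqref{ineq-for-metric-p-1}) and applying the triangle inequality for the norm $g^\sigma_X(\cdot)^{1/2}$ to the decomposition $X-Y=(X-t)+(t-Y)$. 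Careful book-keeping of the $N_0$-polynomial losses generated by the temperance of $g$ is where I expect the main obstacle.

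With the kernel bound in hand, the admissibility conditions \eqref{tem-with-res-to-first} and \eqref{tem-with-res-to-second} on $\eta$ yield the ratio estimate
\[
\eta(Y,H) \le \tilde C^2 \eta(X,\Xi)(1+g^\sigma_X(X-Y))^{\tilde\tau}(1+g^\sigma_X(H-\Xi))^{\tilde\tau};
\]
choosing $N$ much larger than $\tilde\tau$ and the dimensional constants, the weighted kernel $\widetilde K := |K|\,\eta(Y,H)/\eta(X,\Xi)$ retains arbitrary polynomial decay in both $g^\sigma_X$-distances. To finish \eqref{uni-bound-for-cont-mml}, for each fixed $X$ I would apply Young's convolution inequality in $\Xi$: since $\int (1+g^\sigma_X(H-\Xi))^{-N}\, dH = C_N |g^\sigma_X|^{-1/2} = C_N |g_X|^{1/2}$ for $N>2n$ (using \eqref{ineq-for-metric-p-4}), this gives
\[
\Big\|\eta(Y,\cdot)\VV_{\boldsymbol\psi}\VV^*_{\boldsymbol\varphi} G(Y,\cdot)\Big\|_{L^q_H} \le C\int_W (1+g^\sigma_X(X-Y))^{-N}|g_X|^{1/2}\|\eta(X,\cdot)G(X,\cdot)\|_{L^q_\Xi}\, dv_g(X).
\]
A final application of Schur's test on $L^p_Y(dv_g)$ to the integral operator on the right-hand side completes the argument: its row and column $L^1(dv_g)$-sums are uniformly bounded thanks to the inequality $g^\sigma_X(X-Y)\ge g^\sigma_X(Y-U_{X,r'_0})$, \eqref{ineq-for-metric-p-3-1}, \eqref{ineq-for-metric-p-6} and $|g_X||g^\sigma_X|=1$. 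The endpoint cases $p,q\in\{1,\infty\}$ follow by trivial modifications (replacing Young's and Schur's test by elementary $L^1$--$L^\infty$ bounds).
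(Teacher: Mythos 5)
Your overall strategy---writing $\VV_{\boldsymbol\psi}\VV^*_{\boldsymbol\varphi}$ as an integral operator with kernel $K(X,Y;\Xi,H)=\mathcal{F}_\sigma(\varphi_X\overline{\psi_Y})(H-\Xi)$, proving a pointwise decay bound for it, and then running a Minkowski/Young step in the $\Xi$-variable followed by a Schur test in $X$ with respect to $dv_g$---is a genuinely different route from the paper's. The paper proves the diagonal case $p=q$ first (Proposition~\ref{com-adj-stft-sympl}), via the kernel bound plus a Schur argument for $p\in\{1,\infty\}$ and Riesz--Thorin for $1<p<\infty$, and then handles mixed $p\neq q$ by a further seven-case interpolation argument using Benedek--Panzone. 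Your direct Young\,+\,Schur argument, if it worked, would dispense with all of the interpolation machinery and handle every $(p,q)\in[1,\infty]^2$ at once, which would be a real structural simplification. Unfortunately, there is a genuine gap in the kernel estimate.

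The decay you claim for the kernel,
\[
|K(X,Y;\Xi,H)| \lesssim \|\boldsymbol\varphi\|^{(k_N)}_{g,r'_0}\|\boldsymbol\psi\|^{(k_N)}_{g,r'_0}\,(1+g^\sigma_X(X-Y))^{-N}(1+g^\sigma_X(H-\Xi))^{-N},
\]
is not achievable. The essential confinement bounds control $|\partial^a_\theta\varphi_X(t)|$ by $(1+g^\sigma_X(t-U_{X,r'_0}))^{-M}$, not by $(1+g^\sigma_X(t-X))^{-M}$, and since $g^\sigma_X(t-U_{X,r'_0})\le g^\sigma_X(t-X)$ this is a strictly weaker localisation. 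Consequently the $L^1_t$-integral decays like $(1+g^\sigma_X(U_{X,r'_0}-U_{Y,r'_0}))^{-N}$ (up to temperance losses between $g^\sigma_X$ and $g^\sigma_Y$), and your decomposition $X-Y=(X-t)+(t-Y)$ does not upgrade this to decay in $g^\sigma_X(X-Y)$. These two quantities can be arbitrarily far apart: $U_{X,r'_0}$ is a $g_X$-ball, so for a strongly anisotropic H\"ormander metric (say $g_X=\lambda^{-4}|dt_1|^2+|dt_2|^2$ on $\RR^2$, so $g^\sigma_X=|dt_1|^2+\lambda^4|dt_2|^2$) and $Y=X+(r'_0\lambda^2,0)$ one has $U_{X,r'_0}\cap U_{Y,r'_0}\ne\emptyset$ (hence $g^\sigma_X(U_{X,r'_0}-U_{Y,r'_0})=0$ and the kernel is not small) while $g^\sigma_X(X-Y)=r'^2_0\lambda^4\to\infty$. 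This is precisely why the paper's ``Claim'' is stated with decay in $(1+g^\sigma_X(U_{X,r'_0}-U_{Z_1,r'_0}))$, and why matching the kernel decay to the weight requires the \emph{slow variation} condition \eqref{slow-variation-with-resp-to-the-metric-uni} on $\eta$, not just the temperance conditions \eqref{tem-with-res-to-first}--\eqref{tem-with-res-to-second}: one must first replace $X$ by some $X'\in U_{X,r'_0}$ and $Y$ by $Y'\in U_{Y,r'_0}$ at no cost, then apply temperance to $X'-Y'$, and only then take the infimum over $X',Y'$. Your ratio estimate
$\eta(Y,H)\le \tilde C^2\eta(X,\Xi)(1+g^\sigma_X(X-Y))^{\tilde\tau}(1+g^\sigma_X(H-\Xi))^{\tilde\tau}$
is true, but it is incompatible with the achievable kernel decay: you are trading a large power of $(1+g^\sigma_X(X-Y))$ against a factor that does \emph{not} decay in that variable.

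The fix is to carry the $U$-ball distances throughout: prove $|K(X,Y;\Xi,H)|\lesssim(1+g^\sigma_X(U_{X,r'_0}-U_{Y,r'_0}))^{-N}(1+g^\sigma_Y(U_{X,r'_0}-U_{Y,r'_0}))^{-N}(1+g^\sigma_X(H-\Xi))^{-N}(1+g^\sigma_Y(H-\Xi))^{-N}$, derive the matching refined ratio estimate $\eta(Y,H)\lesssim\eta(X,\Xi)(1+g^\sigma_X(U_{X,r'_0}-U_{Y,r'_0}))^{\tilde\tau}(1+g^\sigma_X(H-\Xi))^{\tilde\tau}$ from slow variation plus temperance, and use \eqref{bounds-metric-x}, \eqref{bounds-metric-z1}, \eqref{bound-for-v-jj} to reduce the Schur row and column integrals to \eqref{ineq-for-metric-p-3-1}. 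With those repairs, your Young\,+\,Schur scheme should go through for all $p,q\in[1,\infty]$ and would be a legitimate, interpolation-free alternative to the paper's proof.
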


Before we prove Theorem \ref{com-adj-stft-sympl1}, we show the following special case.

\begin{proposition}\label{com-adj-stft-sympl}
Let $\eta$, $\tilde{r}$ and $r'_0$ be as in Theorem \ref{com-adj-stft-sympl1}. For any $p\in[1,\infty]$ and any $\boldsymbol{\varphi},\boldsymbol{\psi}\in\Conf_g(W;r'_0)$, the mapping
$$
L^p_{\eta}(W\times W,dv_gd\lambda)\rightarrow L^p_{\eta}(W\times W,dv_gd\lambda),\quad G\mapsto \VV_{\boldsymbol{\psi}}\VV^*_{\boldsymbol{\varphi}} G,
$$
is well-defined and continuous. Furthermore, for each $p\in[1,\infty]$ there exist $C\geq 1$ and $k\in\ZZ_+$ such that
\begin{equation}\label{uni-bound-for-cont-mml2}
\|\VV_{\boldsymbol{\psi}}\VV^*_{\boldsymbol{\varphi}} G\|_{L^p_{\eta}(W\times W,dv_gd\lambda)}\leq C \|\boldsymbol{\psi}\|^{(k)}_{g,r'_0}\|\boldsymbol{\varphi}\|^{(k)}_{g,r'_0} \|G\|_{L^p_{\eta}(W\times W,dv_gd\lambda)},
\end{equation}
for all $G\in L^p_{\eta}(W\times W,dv_gd\lambda)$, $\boldsymbol{\varphi}, \boldsymbol{\psi}\in \Conf_g(W;r'_0)$.
\end{proposition}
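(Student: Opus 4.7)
The plan is to realise $\VV_{\boldsymbol{\psi}}\VV^*_{\boldsymbol{\varphi}}$ as an integral operator with an explicit measurable kernel and then invoke a weighted Schur test. Setting $\varphi_X := \boldsymbol{\varphi}(X)$ and $\psi_{X'} := \boldsymbol{\psi}(X')$ and formally interchanging integrations yields
$$
\VV_{\boldsymbol{\psi}}\VV^*_{\boldsymbol{\varphi}} G(X',\Xi') = \int_{W\times W} K((X',\Xi'),(X,\Xi))\, G(X,\Xi)\, dv_g(X)\, d\Xi,
$$
with kernel $K((X',\Xi'),(X,\Xi)) = \mathcal{F}_{\sigma}(\varphi_X\overline{\psi_{X'}})(\Xi'-\Xi)$. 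I would first justify this representation on the dense subspace $\SSS(W\times W) \subseteq L^p_{\eta}(W\times W,dv_gd\lambda)$ (when $p<\infty$) by Fubini and extend by density; for $p=\infty$ the desired bound is read off directly from the Schur estimate below.

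The technical core is a pointwise kernel bound: for every $N\in\NN$ there exist $k=k(N)\in\NN$ and $C_N>0$ such that
$$
|K((X',\Xi'),(X,\Xi))| \leq C_N\, \|\boldsymbol{\varphi}\|^{(k)}_{g,r'_0}\, \|\boldsymbol{\psi}\|^{(k)}_{g,r'_0}\, \omega_N(X,X')\, (1+g^{\sigma}_X(\Xi'-\Xi))^{-N},
$$
where $\omega_N(X,X')$ is a geometric decay factor (of the form $(1+g^{\sigma}_X(X'-U_{X,r'_0}))^{-N}$ or a symmetric variant) capturing the mutual confinement of $\varphi_X$ and $\psi_{X'}$ at a common point. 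Rapid decay in $\Xi'-\Xi$ is produced by integrating by parts $k$ times in $y$ via the operator $(1-(2\pi i)^{-1}\partial_{\theta})$ with $\theta\in W$ chosen, exactly as in the proof of Proposition \ref{lemma-for-conti-of-stft-sympl} (ii), so that $[\Xi'-\Xi,\theta]=g^{\sigma}_0(\Xi'-\Xi)^{1/2}$ and $g_0(\theta)=1$; the derivatives landing on $\varphi_X\overline{\psi_{X'}}$ are controlled by the confinement seminorms \eqref{family-seminorms-confsym}, while conversions between $g_0, g_X$ and between $g^{\sigma}_0, g^{\sigma}_X$ are handled by \eqref{ineq-for-metric-p-1}. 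The factor $\omega_N$ is extracted by splitting the confinement weights from $\varphi_X$ and $\psi_{X'}$ via the triangle inequality for $g^{\sigma}_X(\cdot)^{1/2}$ at the common integration point $y$ and converting the resulting $g^{\sigma}_{X'}$-quantities to $g^{\sigma}_X$-quantities through \eqref{ineq-for-metric-p-1} and \eqref{ineq-for-metric-p-3}.

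Once the kernel estimate is available, Schur's lemma reduces the $L^p_{\eta}$-bound for every $p\in[1,\infty]$ to the uniform finiteness of
$$
\sup_{X',\Xi'} \int_{W\times W} |K((X',\Xi'),(X,\Xi))|\,\frac{\eta(X',\Xi')}{\eta(X,\Xi)}\, dv_g(X)\,d\Xi
$$
and of the symmetric supremum over $(X,\Xi)$. The admissibility conditions \eqref{slow-variation-with-resp-to-the-metric-uni}--\eqref{tem-with-res-to-second} combined with Lemma \ref{admisibil-weig-mul} give $\eta(X',\Xi')/\eta(X,\Xi) \leq C(1+g^{\sigma}_X(X-X'))^{\tilde{\tau}}(1+g^{\sigma}_X(\Xi'-\Xi))^{\tilde{\tau}}$; the temperance factor in $\Xi'-\Xi$ is absorbed by taking $N$ large, the factor in $X-X'$ is absorbed against $\omega_N$ (after a conversion step using \eqref{ineq-for-metric-p-1}--\eqref{ineq-for-metric-p-3}), the $dv_g(X)$-integral is controlled by \eqref{ineq-for-metric-p-3-1}, and the residual $d\Xi$-integral is a standard polynomial integral in Euclidean coordinates. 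Choosing $N$ and $k=k(N)$ sufficiently large delivers \eqref{uni-bound-for-cont-mml2}, and the symmetric supremum is treated analogously after swapping $\boldsymbol{\varphi}$ and $\boldsymbol{\psi}$.

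The main obstacle is the pointwise kernel estimate with clean uniformity in $\boldsymbol{\varphi}$ and $\boldsymbol{\psi}$: the two confined factors are measured in the \emph{distinct} metrics $g_X$ and $g_{X'}$, yet the decay on the right-hand side must be expressed uniformly through $g^{\sigma}_X$. The bookkeeping of metric conversions via Lemma \ref{tec-res-ine-for-hor-metr-ini}, keeping track of the polynomial loss at every step and arranging the final choice of $N$ so that all losses are absorbed together with the temperance exponent of $\eta$, is the delicate part of the argument.
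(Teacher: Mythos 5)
Your plan is correct and matches the paper's strategy in all essentials: you realise $\VV_{\boldsymbol{\psi}}\VV^*_{\boldsymbol{\varphi}}$ as an integral operator with kernel $\mathcal{F}_{\sigma}(\varphi_X\overline{\psi_{X'}})(\Xi'-\Xi)$, establish a pointwise kernel bound with decay in the frequency difference via integration by parts (using the confinement seminorms \eqref{family-seminorms-confsym}) and in the spatial separation via the conversion inequalities \eqref{ineq-for-metric-p-1} and \eqref{ineq-for-metric-p-3}, and then derive uniform $L^p_{\eta}$-boundedness; this is precisely the paper's ``Claim'' plus its application. The only presentational difference is that you invoke a weighted Schur test to cover all $p\in[1,\infty]$ at once, whereas the paper proves the endpoints $p=1$ and $p=\infty$ explicitly (which are exactly the two Schur conditions) and then interpolates by Riesz--Thorin; these routes are equivalent and both deliver the operator-norm bound \eqref{uni-bound-for-cont-mml2} with the same uniformity in $\boldsymbol{\varphi},\boldsymbol{\psi}$.
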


\begin{proof} Let $\tilde{\tau}\geq 0$ be a temperance constant for $\eta$. Let $\boldsymbol{\varphi},\boldsymbol{\psi}\in\Conf_g(W;r'_0)$ and set $\varphi_X:=\boldsymbol{\varphi}(X)$, $\psi_X:=\boldsymbol{\psi}(X)$, $X\in W$. In view of Proposition \ref{lemma-for-conti-of-stft-sympl} $(i)$ and Proposition \ref{lemma-for-the-adj-of-stft-sympl} $(i)$,
\begin{align}\label{conti-com-st-adj}
\VV_{\boldsymbol{\psi}}\VV^*_{\boldsymbol{\varphi}}: \lim_{\substack{\longrightarrow \\ s\rightarrow\infty}} L^1_{(1+|\cdot|)^{-s}}(W\times W)\rightarrow \lim_{\substack{\longrightarrow \\ s\rightarrow \infty}} L^{\infty}_{(1+|\cdot|)^{-s}}(W\times W)
\end{align}
is well-defined and continuous. For $G\in \ds\lim_{\substack{\longrightarrow \\ s\rightarrow\infty}} L^1_{(1+|\cdot|)^{-s}}(W\times W)$, we infer
$$
\VV_{\boldsymbol{\psi}}\VV^*_{\boldsymbol{\varphi}}G(X,\Xi)=\langle G, |g|^{1/2}\overline{\VV_{\boldsymbol{\varphi}}(e^{2\pi i[\Xi,\cdot]}\psi_X)}\rangle.
$$
Notice that $\overline{\VV_{\boldsymbol{\varphi}}(e^{2\pi i[\Xi,\cdot]}\psi_X)(Z_1,Z_2)}=\VV_{\boldsymbol{\psi}}\varphi_{Z_1}(X,\Xi-Z_2)$ and consequently,
$$
|\VV_{\boldsymbol{\psi}}\VV^*_{\boldsymbol{\varphi}}G(X,\Xi)|\leq \int_{W\times W} |G(Z_1,\Xi-Z_2)||\VV_{\boldsymbol{\psi}}\varphi_{Z_1}(X,Z_2)||g_{Z_1}|^{1/2}dZ_1dZ_2,\;\; X,\Xi\in W.
$$
We make the following\\
\\
\noindent \textbf{Claim.} For every $N>0$ there are $C\geq 1$ and $k\in\ZZ_+$ such that
\begin{multline*}
|\VV_{\boldsymbol{\psi}}\varphi_{Z_1}(X,Z_2)||g_{Z_1}|^{1/2}\leq C \|\varphi_{Z_1}\|^{(k)}_{g_{Z_1}, U_{Z_1,r'_0}}\|\psi_X\|^{(k)}_{g_X,U_{X,r'_0}} (1+g^{\sigma}_{Z_1}(Z_2))^{-N}(1+g^{\sigma}_X(Z_2))^{-N}\\
\cdot (1+g^{\sigma}_{Z_1}(U_{X,r'_0}-U_{Z_1,r'_0}))^{-N} (1+g^{\sigma}_X(U_{X,r'_0}-U_{Z_1,r'_0}))^{-N},
\end{multline*}
for all $X,Z_1,Z_2\in W$, $\boldsymbol{\varphi},\boldsymbol{\psi}\in\Conf_g(W;r'_0)$.\\
\\
We postpone its proof for later and continue with the proof of the proposition. Let $X'\in U_{X,r'_0}$ and $Z'_1\in U_{Z_1,r'_0}$ be arbitrary. Then
\begin{align*}
\eta(X,\Xi)&\leq \tilde{C}\eta(X',\Xi)\leq \tilde{C}^2\eta(Z'_1,\Xi)(1+g^{\sigma}_{Z'_1}(X'-Z'_1))^{\tilde{\tau}}\\
&\leq C_0^{\tilde{\tau}}\tilde{C}^3 \eta(Z_1,\Xi)(1+g^{\sigma}_{Z_1}(X'-Z'_1))^{\tilde{\tau}}\\
&\leq C_0^{\tilde{\tau}}\tilde{C}^4 \eta(Z_1,\Xi-Z_2)(1+g^{\sigma}_{Z_1}(Z_2))^{\tilde{\tau}} (1+g^{\sigma}_{Z_1}(X'-Z'_1))^{\tilde{\tau}}.
\end{align*}
As $X'\in U_{X,r'_0}$ and $Z'_1\in U_{Z_1,r'_0}$ are arbitrary, we deduce
\begin{equation*}
\eta(X,\Xi)\leq C_0^{\tilde{\tau}}\tilde{C}^4 \eta(Z_1,\Xi-Z_2)(1+g^{\sigma}_{Z_1}(Z_2))^{\tilde{\tau}} (1+g^{\sigma}_{Z_1}(U_{X,r'_0}-U_{Z_1,r'_0}))^{\tilde{\tau}}.
\end{equation*}
We employ the bounds in the Claim to infer that there exists $k\in\ZZ_+$ such that
\begin{align}
|&\VV_{\boldsymbol{\psi}}\VV^*_{\boldsymbol{\varphi}}G(X,\Xi)|\eta(X,\Xi)\nonumber\\
&\leq C_1\|\boldsymbol{\varphi}\|^{(k)}_{g,r'_0}\|\psi_X\|^{(k)}_{g_X,U_{X,r'_0}} \int_{W\times W} |G(Z_1,\Xi-Z_2)|\eta(Z_1,\Xi-Z_2)\nonumber\\
&\qquad\cdot (1+g^{\sigma}_{Z_1}(Z_2))^{-n-1}(1+g^{\sigma}_X(Z_2))^{-n-1} (1+g^{\sigma}_{Z_1}(U_{X,r'_0}-U_{Z_1,r'_0}))^{-(2n+1)(N_0+1)}\nonumber \\
&\qquad \cdot (1+g^{\sigma}_X(U_{X,r'_0}-U_{Z_1,r'_0}))^{-(2n+1)(N_0+1)}dZ_1dZ_2,\label{bounds-for-gen}
\end{align}
for all $X,\Xi\in W$. Let $X'\in U_{X,r'_0}$ and $Z'_1\in U_{Z_1,r'_0}$ be arbitrary. We have
\begin{align*}
g_X(X-Z_1)&\leq 3g_X(X-X')+3g_X(Z'_1-Z_1)+3g_X(X'-Z'_1)\\
&\leq 3r'^2_0+3C_0g_{X'}(Z'_1-Z_1)+3g^{\sigma}_X(X'-Z'_1)\\
&\leq 3r'^2_0+3C_0^2g_{Z'_1}(Z'_1-Z_1)(1+g^{\sigma}_{X'}(X'-Z'_1))^{N_0} +3g^{\sigma}_X(X'-Z'_1)\\
&\leq 3r'^2_0+3r'^2_0C_0^{N_0+3}(1+g^{\sigma}_X(X'-Z'_1))^{N_0} +3g^{\sigma}_X(X'-Z'_1),
\end{align*}
and consequently,
\begin{align}\label{bounds-metric-x}
g_X(X-Z_1)\leq 3(2r'^2_0+1)C_0^{N_0+3}(1+g^{\sigma}_X(U_{X,r'_0}-U_{Z_1,r'_0}))^{N_0+1}.
\end{align}
Analogously,
\begin{align}\label{bounds-metric-z1}
g_{Z_1}(X-Z_1)\leq 3(2r'^2_0+1)C_0^{N_0+3}(1+g^{\sigma}_{Z_1}(U_{X,r'_0}-U_{Z_1,r'_0}))^{N_0+1}.
\end{align}
First we prove the claim in the proposition when $p=1$ and $p=\infty$. When $p=\infty$, \eqref{bounds-for-gen} and \eqref{bounds-metric-x} together with the identity $|g_X||g^{\sigma}_X|=1$ imply
\begin{align*}
|&\VV_{\boldsymbol{\psi}}\VV^*_{\boldsymbol{\varphi}}G(X,\Xi)|\eta(X,\Xi)\\
&\leq C_2\|\boldsymbol{\varphi}\|^{(k)}_{g,r'_0}\|\psi_X\|^{(k)}_{g_X,U_{X,r'_0}} \|G\|_{L^{\infty}_{\eta}(W\times W)} \int_{W\times W} \frac{|g_X|^{1/2}|g^{\sigma}_X|^{1/2}dZ_1dZ_2} {(1+g^{\sigma}_X(Z_2))^{n+1}(1+g_X(X-Z_1))^{n+1}}\\
&= C_2\|\boldsymbol{\varphi}\|^{(k)}_{g,r'_0}\|\psi_X\|^{(k)}_{g_X,U_{X,r'_0}} \|G\|_{L^{\infty}_{\eta}(W\times W)} \int_W\frac{|g_X|^{1/2}dZ_1}{(1+g_X(Z_1))^{n+1}} \int_W\frac{|g^{\sigma}_X|^{1/2}dZ_2}{(1+g^{\sigma}_X(Z_2))^{n+1}}\\
&\leq C_3 \|\boldsymbol{\varphi}\|^{(k)}_{g,r'_0}\|\psi_X\|^{(k)}_{g_X,U_{X,r'_0}} \|G\|_{L^{\infty}_{\eta}(W\times W)},
\end{align*}
which proves the claim in the proposition when $p=\infty$. Next, we consider the case $p=1$. For $X'\in U_{X,r'_0}$, in view of \eqref{ineq-for-metric-p-5} and \eqref{ineq-for-metric-p-6}, we have
$$
|g_X|\leq C_0^{6n+2nN_0}|g_{Z_1}|(1+g^{\sigma}_{Z_1}(X'-U_{Z_1,r'_0}))^{2nN_0},
$$
hence
\begin{equation}\label{bound-for-v-jj}
|g_X|\leq C_0^{2nN_0+6n} |g_{Z_1}|(1+g^{\sigma}_{Z_1}(U_{X,r'_0}-U_{Z_1,r'_0}))^{2nN_0}.
\end{equation}
Now, \eqref{bounds-for-gen} and \eqref{bounds-metric-z1} imply
\begin{multline}\label{ine-for-the-case-1-b}
\|\VV_{\boldsymbol{\psi}}\VV^*_{\boldsymbol{\varphi}}G\|_{L^1_{\eta}(W\times W,dv_gd\lambda)}\\
\leq C'_2\|\boldsymbol{\varphi}\|^{(k)}_{g,r'_0}\|\boldsymbol{\psi}\|^{(k)}_{g,r'_0} \int_{W\times W} |G(Z_1,Z_2)|\eta(Z_1,Z_2)F(Z_1,Z_2)|g_{Z_1}|^{1/2}dZ_1dZ_2,
\end{multline}
where
$$
F(Z_1,Z_2)=\int_{W\times W} (1+g^{\sigma}_{Z_1}(\Xi-Z_2))^{-n-1}(1+g_{Z_1}(X-Z_1))^{-n-1}dXd\Xi.
$$
Employing $|g_{Z_1}||g^{\sigma}_{Z_1}|=1$, similarly as above, we infer that $F\in L^{\infty}(W\times W)$ and the bound \eqref{ine-for-the-case-1-b} proves the claim in the proposition when $p=1$. We prove the case when $p\in (1,\infty)$ by interpolation. Fix $p\in(1,\infty)$ and notice that the above implies that
\begin{align}
&\VV_{\boldsymbol{\psi}}\VV^*_{\boldsymbol{\varphi}}: L^1(W\times W,d\mu_p)\rightarrow L^1(W\times W,d\mu_p)\quad \mbox{and}\label{boun-ope-for-int}\\
&\VV_{\boldsymbol{\psi}}\VV^*_{\boldsymbol{\varphi}}: L^{\infty}(W\times W,d\mu_p)\rightarrow L^{\infty}(W\times W,d\mu_p)\label{boun-ope-for-int-anoth}
\end{align}
are well-defined and continuous, where $d\mu_p$ is the measure $\eta^pdv_gd\lambda$ (in view of Lemma \ref{admisibil-weig-mul}, $\eta^p$ is uniformly admissible with a slow variation constant $\tilde{r}$); notice that $L^{\infty}(W\times W,d\mu_p)=L^{\infty}(W\times W,dv_gd\lambda)=L^{\infty}(W\times W)$. Now, in view of \eqref{conti-com-st-adj}, the claim in the proposition follows from the Riesz-Thorin interpolation theorem \cite[Theorem 1.3.4, p. 37]{Grafakos}; \eqref{uni-bound-for-cont-mml2} follows from the bounds we showed for \eqref{boun-ope-for-int} and \eqref{boun-ope-for-int-anoth} and the bounds in the Riesz-Thorin theorem.\\
\indent It remains to prove the Claim. Let $N\in\ZZ_+$ be arbitrary and pick $N_1\in\ZZ_+$ such that $N_1\geq N(2N_0+1)+n+1$. For every fixed $Z_1,Z_2\in W$, there exists $\theta=\theta(Z_1,Z_2)\in W$ such that $g_{Z_1}(\theta)=1$ and $[Z_2,\theta]=g^{\sigma}_{Z_1}(Z_2)^{1/2}$. Then
$$
(1-(2\pi i)^{-1}\partial_{\theta;Y})^{2N}e^{-2\pi i[Z_2,Y]}= (1+g^{\sigma}_{Z_1}(Z_2)^{1/2})^{2N}e^{-2\pi i[Z_2,Y]}.
$$
We estimate as follows
\begin{align*}
(1+&g^{\sigma}_{Z_1}(Z_2))^N|\VV_{\boldsymbol{\psi}}\varphi_{Z_1}(X,Z_2)||g_{Z_1}|^{1/2}\\
&\leq C'_1|g_{Z_1}|^{1/2}\sum_{N'+N''\leq 2N}\int_W |\partial^{N'}_{\theta;Y}\varphi_{Z_1}(Y)||\partial^{N''}_{\theta;Y}\psi_X(Y)| dY\\
&\leq C'_1\|\varphi_{Z_1}\|^{(2N_1)}_{g_{Z_1}, U_{Z_1,r'_0}}\|\psi_X\|^{(2N_1)}_{g_X,U_{X,r'_0}}\\
&{}\quad\cdot\sum_{N'+N''\leq 2N}\int_W \frac{g_X(\theta)^{N''/2}|g_{Z_1}|^{1/2}} {(1+g^{\sigma}_{Z_1}(Y-U_{Z_1,r'_0}))^{N_1}(1+g^{\sigma}_X(Y-U_{X,r'_0}))^{N_1}}dY.
\end{align*}
Applying \eqref{ineq-for-metric-p-1} twice, we infer (recall, $g_{Z_1}(\theta)=1$)
\begin{align*}
g_X(\theta)&\leq C_0^{N_0+2} g_Y(\theta)(1+g^{\sigma}_X(Y-U_{X,r'_0}))^{N_0}\\
&\leq C_0^{2N_0+4} (1+g^{\sigma}_{Z_1}(Y-U_{Z_1,r'_0}))^{N_0}(1+g^{\sigma}_X(Y-U_{X,r'_0}))^{N_0}.
\end{align*}
Hence,
\begin{multline}\label{bounds-for-int-metrr}
(1+g^{\sigma}_{Z_1}(Z_2))^N|\VV_{\boldsymbol{\psi}}\varphi_{Z_1}(X,Z_2)||g_{Z_1}|^{1/2} \leq C'_2 \|\varphi_{Z_1}\|^{(2N_1)}_{g_{Z_1}, U_{Z_1,r'_0}}\|\psi_X\|^{(2N_1)}_{g_X,U_{X,r'_0}}\\
\cdot\int_W (1+g^{\sigma}_{Z_1}(Y-U_{Z_1,r'_0}))^{-N_1+NN_0} (1+g^{\sigma}_X(Y-U_{X,r'_0}))^{-N(N_0+1)}|g_{Z_1}|^{1/2}dY.
\end{multline}
Let $X'\in U_{X,r'_0}$ and $Z'_1\in U_{Z_1,r'_0}$. Then
\begin{align*}
g^{\sigma}_{Z_1}(X'-Z'_1)\leq 2C_0g^{\sigma}_{Z'_1}(X'-Y)+2g^{\sigma}_{Z_1}(Y-Z'_1).
\end{align*}
We estimate the first term as follows
\begin{align*}
g^{\sigma}_{Z'_1}(X'-Y)&\leq C_0g^{\sigma}_Y(X'-Y)(1+g^{\sigma}_{Z'_1}(Y-Z'_1))^{N_0}\\
&\leq C_0^2g^{\sigma}_{X'}(X'-Y)(1+g^{\sigma}_{X'}(X'-Y))^{N_0} (1+g^{\sigma}_{Z'_1}(Y-Z'_1))^{N_0}\\
&\leq C_0^{2N_0+3}g^{\sigma}_X(X'-Y)(1+g^{\sigma}_X(X'-Y))^{N_0} (1+g^{\sigma}_{Z_1}(Y-Z'_1))^{N_0}.
\end{align*}
Consequently,
$$
1+g^{\sigma}_{Z_1}(X'-Z'_1)\leq 2C_0^{2N_0+4}(1+g^{\sigma}_X(X'-Y))^{N_0+1}(1+g^{\sigma}_{Z_1}(Y-Z'_1))^{N_0+1}.
$$
As $X'\in U_{X,r'_0}$ and $Z'_1\in U_{Z_1,r'_0}$ are arbitrary, we deduce
\begin{multline*}
(1+g^{\sigma}_{Z_1}(U_{X,r'_0}-U_{Z_1,r'_0}))^N\\
\leq 2^NC_0^{N(2N_0+4)}(1+g^{\sigma}_X(U_{X,r'_0}-Y))^{N(N_0+1)} (1+g^{\sigma}_{Z_1}(Y-U_{Z_1,r'_0}))^{N(N_0+1)}.
\end{multline*}
Employing this bound in \eqref{bounds-for-int-metrr}, we infer
\begin{align*}
(1+&g^{\sigma}_{Z_1}(Z_2))^N(1+g^{\sigma}_{Z_1}(U_{X,r'_0}-U_{Z_1,r'_0}))^N |\VV_{\boldsymbol{\psi}}\varphi_{Z_1}(X,Z_2)||g_{Z_1}|^{1/2}\\
&\leq C'_3 \|\varphi_{Z_1}\|^{(2N_1)}_{g_{Z_1}, U_{Z_1,r'_0}}\|\psi_X\|^{(2N_1)}_{g_X,U_{X,r'_0}} \int_W \frac{|g_{Z_1}|^{1/2}dY} {(1+g^{\sigma}_{Z_1}(Y-U_{Z_1,r'_0}))^{n+1}}\\
& \leq C'_4\|\varphi_{Z_1}\|^{(2N_1)}_{g_{Z_1}, U_{Z_1,r'_0}}\|\psi_X\|^{(2N_1)}_{g_X,U_{X,r'_0}}\int_W \frac{|g_{Z_1}|^{1/2}dY} {(1+g_{Z_1}(Y-Z_1))^{n+1}}.
\end{align*}
Since the very last integral is uniformly bounded by a constant for all $Z_1\in W$, we deduce the following bounds: for every $N>0$ there are $C\geq 1$ and $k\in\ZZ_+$ such that
\begin{multline}\label{intme-bound-for-theshorttime-sympl}
|\VV_{\boldsymbol{\psi}}\varphi_{Z_1}(X,Z_2)||g_{Z_1}|^{1/2}\leq C \|\varphi_{Z_1}\|^{(k)}_{g_{Z_1}, U_{Z_1,r'_0}}\|\psi_X\|^{(k)}_{g_X,U_{X,r'_0}}\\
(1+g^{\sigma}_{Z_1}(Z_2))^{-N}(1+g^{\sigma}_{Z_1}(U_{X,r'_0}-U_{Z_1,r'_0}))^{-N},
\end{multline}
for all $X,Z_1,Z_2\in W$, $\boldsymbol{\varphi},\boldsymbol{\psi}\in\Conf_g(W;r'_0)$. We claim that \eqref{intme-bound-for-theshorttime-sympl} implies the desired bounds. To see this, let $X'\in U_{X,r'_0}$ and $Z'_1\in U_{Z_1,r'_0}$ and notice that
\begin{align*}
1+g^{\sigma}_X(Z_2)&\leq 1+C_0g^{\sigma}_{X'}(Z_2)\leq 1+C_0^2g^{\sigma}_{Z'_1}(Z_2)(1+g^{\sigma}_{Z'_1}(X'-Z'_1))^{N_0}\\
&\leq 1+C_0^{N_0+3}g^{\sigma}_{Z_1}(Z_2)(1+g^{\sigma}_{Z_1}(X'-Z'_1))^{N_0},
\end{align*}
which gives
\begin{equation}\label{inequ-met-1}
1+g^{\sigma}_X(Z_2)\leq C_0^{N_0+3}(1+g^{\sigma}_{Z_1}(Z_2))(1+g^{\sigma}_{Z_1}(U_{X,r'_0}-U_{Z_1,r'_0}))^{N_0}.
\end{equation}
Similarly, for $X'\in U_{X,r'_0}$ and $Z'_1\in U_{Z_1,r'_0}$, we infer
\begin{align*}
1+g^{\sigma}_X(X'-Z'_1)&\leq 1+C_0g^{\sigma}_{X'}(X'-Z'_1)\leq 1+C^2_0g^{\sigma}_{Z'_1}(X'-Z'_1)(1+g^{\sigma}_{Z'_1}(X'-Z'_1))^{N_0}\\
&\leq 1+C^{N_0+3}_0g^{\sigma}_{Z_1}(X'-Z'_1)(1+g^{\sigma}_{Z_1}(X'-Z'_1))^{N_0},
\end{align*}
whence
\begin{equation}\label{inequ-met-2}
1+g^{\sigma}_X(U_{X,r'_0}-U_{Z_1,r'_0})\leq C^{N_0+3}_0(1+g^{\sigma}_{Z_1}(U_{X,r'_0}-U_{Z_1,r'_0}))^{N_0+1}.
\end{equation}
Combining \eqref{inequ-met-1} and \eqref{inequ-met-2} with \eqref{intme-bound-for-theshorttime-sympl}, we deduce the desired bounds in the Claim.
\end{proof}

\begin{proof}[Proof of Theorem \ref{com-adj-stft-sympl1}] The case when $p=q\in[1,\infty]$ follows from Proposition \ref{com-adj-stft-sympl}. We divide the proof of the rest in seven cases. Let $\boldsymbol{\varphi},\boldsymbol{\psi}\in\Conf_g(W;r'_0)$ and set $\varphi_X:=\boldsymbol{\varphi}(X)$, $\psi_X:=\boldsymbol{\psi}(X)$, $X\in W$.\\
\indent \underline{Case 1: $p=\infty$, $q\in[1,\infty)$.} Let $G\in \widetilde{L}^{\infty,q}_{\eta}(W\times W,dv_gd\lambda)$. Then, \eqref{bounds-for-gen} together with the Minkowski integral inequality implies
\begin{multline*}
\left(\int_W|\VV_{\boldsymbol{\psi}}\VV^*_{\boldsymbol{\varphi}}G(X,\Xi)|^q \eta(X,\Xi)^qd\Xi\right)^{1/q}\leq C_1 \|\boldsymbol{\varphi}\|^{(k)}_{g,r'_0}\|\psi_X\|^{(k)}_{g_X,U_{X,r'_0}} \|G\|_{\widetilde{L}^{\infty,q}_{\eta}(W\times W,dv_gd\lambda)}\\
\cdot\int_{W\times W} (1+g^{\sigma}_X(Z_2))^{-n-1} (1+g^{\sigma}_X(U_{X,r'_0}-U_{Z_1,r'_0}))^{-(2n+1)(N_0+1)}dZ_1dZ_2,
\end{multline*}
a.a. $X$. Employing $|g_X||g^{\sigma}_X|=1$ and \eqref{bounds-metric-x}, by the same technique as in the proof of Proposition \ref{com-adj-stft-sympl} one can show that the integral on the right is uniformly bounded by a constant for all $X\in W$, which completes the proof of Case 1.\\
\indent \underline{Case 2: $p=1$, $q=\infty$.} Let $G\in \widetilde{L}^{1,\infty}_{\eta}(W\times W, dv_gd\lambda)$. Then \eqref{bounds-for-gen} together with $|g_{Z_1}||g^{\sigma}_{Z_1}|=1$ implies
\begin{multline*}
|\VV_{\boldsymbol{\psi}}\VV^*_{\boldsymbol{\varphi}}G(X,\Xi)|\eta(X,\Xi)\leq C_1\|\boldsymbol{\varphi}\|^{(k)}_{g,r'_0}\|\psi_X\|^{(k)}_{g_X,U_{X,r'_0}}\\
\cdot \int_{W\times W} \frac{|G(Z_1,\Xi-Z_2)|\eta(Z_1,\Xi-Z_2)|g_{Z_1}|^{1/2} |g^{\sigma}_{Z_1}|^{1/2}} {(1+g^{\sigma}_{Z_1}(Z_2))^{n+1} (1+g^{\sigma}_{Z_1}(U_{X,r'_0}-U_{Z_1,r'_0}))^{(2n+1)(N_0+1)}}dZ_1dZ_2.
\end{multline*}
Set $F(Z_1):=\esssup_{Z_2\in W}|G(Z_1,Z_2)|\eta(Z_1,Z_2)$. Notice that \eqref{bound-for-v-jj} implies
$$
|g^{\sigma}_{Z_1}|\leq C_0^{2nN_0+6n}|g^{\sigma}_X|(1+g^{\sigma}_{Z_1}(U_{X,r'_0}-U_{Z_1,r'_0}))^{2nN_0}.
$$
In view of \eqref{bounds-metric-z1}, we deduce
\begin{multline*}
\esssup_{\Xi\in W} |\VV_{\boldsymbol{\psi}}\VV^*_{\boldsymbol{\varphi}}G(X,\Xi)|\eta(X,\Xi)\leq C'_2 \|\boldsymbol{\varphi}\|^{(k)}_{g,r'_0}\|\psi_X\|^{(k)}_{g_X,U_{X,r'_0}}\\
\cdot |g^{\sigma}_X|^{1/2}\int_W \frac{F(Z_1)|g_{Z_1}|}{(1+g_{Z_1}(X-Z_1))^{n+1}}\left(\int_W \frac{|g^{\sigma}_{Z_1}|^{1/2}dZ_2} {(1+g^{\sigma}_{Z_1}(Z_2))^{n+1}}\right) dZ_1,
\end{multline*}
a.a. $X$. Consequently,
\begin{align*}
\|&\VV_{\boldsymbol{\psi}}\VV^*_{\boldsymbol{\varphi}} G\|_{\widetilde{L}^{1,\infty}_{\eta}(W\times W,dv_gd\lambda)}\\
&\leq C'_3\|\boldsymbol{\varphi}\|^{(k)}_{g,r'_0}\|\boldsymbol{\psi}\|^{(k)}_{g,r'_0} \int_W F(Z_1)|g_{Z_1}|^{1/2}\left(\int_W\frac{|g_{Z_1}|^{1/2}dX} {(1+g_{Z_1}(X-Z_1))^{n+1}}\right)dZ_1\\
&\leq C'_4 \|\boldsymbol{\varphi}\|^{(k)}_{g,r'_0}\|\boldsymbol{\psi}\|^{(k)}_{g,r'_0} \|G\|_{\widetilde{L}^{1,\infty}_{\eta}(W\times W,dv_gd\lambda)},
\end{align*}
which completes the proof of Case 2.\\
\indent Before we consider the remaining cases, we introduce the following notations. For any positive measurable functions $w$ on $W\times W$ which satisfies \eqref{pol-b-d-wei-s}, denote by $P_w$ the topological isomorphism
$$
P_w:\lim_{\substack{\longrightarrow \\ s\rightarrow\infty}} L^1_{(1+|\cdot|)^{-s}}(W\times W)\rightarrow \lim_{\substack{\longrightarrow \\ s\rightarrow\infty}} L^1_{(1+|\cdot|)^{-s}}(W\times W),\quad P_w(F)=Fw;
$$
its inverse is $P_{1/w}$. Furthermore, we denote by $Q$ the topological isomorphism
$$
Q:\lim_{\substack{\longrightarrow \\ s\rightarrow\infty}} L^1_{(1+|\cdot|)^{-s}}(W\times W)\rightarrow \lim_{\substack{\longrightarrow \\ s\rightarrow\infty}} L^1_{(1+|\cdot|)^{-s}}(W\times W),\quad Q(F)(X,\Xi)=F(\Xi,X);
$$
clearly $Q\circ Q=\operatorname{Id}$. For any measurable $w_1:W\times W\rightarrow (0,\infty)$ which satisfies \eqref{pol-b-d-wei-s}, $P_w$ and $Q$ restrict to the following bijective isometries for all $p,q\in[1,\infty]$:
\begin{gather*}
P_w: L^{p,q}_{w_1}(W\times W)\rightarrow L^{p,q}_{w_1/w}(W\times W),\quad P_w: \widetilde{L}^{p,q}_{w_1}(W\times W)\rightarrow \widetilde{L}^{p,q}_{w_1/w}(W\times W),\\
Q:L^{p,q}_{w_1}(W\times W)\rightarrow \widetilde{L}^{q,p}_{Qw_1}(W\times W).
\end{gather*}
\indent \underline{Case 3: $p\in(1,\infty)$, $q=1$.} The proof is by interpolation. Set
$$
\eta_1(X,\Xi):=|g_X|^{\frac{1}{2p}-\frac{1}{2}}\eta(X,\Xi),\quad \eta_2(X,\Xi):=|g_X|^{1/(2p)}\eta(X,\Xi),\qquad X,\Xi\in W.
$$
Lemma \ref{admisibil-weig-mul} and Remark \ref{rem-for-adm-func-for-fromad} verify that $\eta_1$ and $\eta_2$ are uniformly admissible with slow variation constant $\min\{r_0,\tilde{r}\}$. In view of Case 1 and Proposition \ref{com-adj-stft-sympl},
\begin{align}
&\VV_{\boldsymbol{\psi}}\VV^*_{\boldsymbol{\varphi}}:\widetilde{L}^{1,1}_{\eta_1}(W\times W, dv_gd\lambda)\rightarrow \widetilde{L}^{1,1}_{\eta_1}(W\times W, dv_gd\lambda)\quad \mbox{and} \label{boun-interpol-firs}\\
&\VV_{\boldsymbol{\psi}}\VV^*_{\boldsymbol{\varphi}}: \widetilde{L}^{\infty,1}_{\eta_2}(W\times W, dv_gd\lambda)\rightarrow \widetilde{L}^{\infty,1}_{\eta_2}(W\times W, dv_gd\lambda)\label{bound-interpol-anoth-map}
\end{align}
are well-defined and continuous. Consequently,
\begin{align*}
&QP_{\eta_2}(\VV_{\boldsymbol{\psi}}\VV^*_{\boldsymbol{\varphi}})P_{1/\eta_2}Q: L^{1,1}(W\times W)\rightarrow L^{1,1}(W\times W)\quad \mbox{and}\\
&QP_{\eta_2}(\VV_{\boldsymbol{\psi}}\VV^*_{\boldsymbol{\varphi}})P_{1/\eta_2}Q: L^{1,\infty}(W\times W)\rightarrow L^{1,\infty}(W\times W)
\end{align*}
are well-defined and continuous. Now, the Riesz-Thorin interpolation theorem for $L^{p,q}$-spaces \cite[Section 7, Theorem 2]{ben-pen} implies that $QP_{\eta_2}(\VV_{\boldsymbol{\psi}}\VV^*_{\boldsymbol{\varphi}})P_{1/\eta_2}Q: L^{1,p}(W\times W)\rightarrow L^{1,p}(W\times W)$ is well-defined and continuous, which, in turn, implies the claim in the theorem in view of the identity
\begin{align}\label{equ-for-com-via-m}
\VV_{\boldsymbol{\psi}}\VV^*_{\boldsymbol{\varphi}}= P_{1/\eta_2}Q(QP_{\eta_2}(\VV_{\boldsymbol{\psi}} \VV^*_{\boldsymbol{\varphi}})P_{1/\eta_2}Q)QP_{\eta_2};
\end{align}
\eqref{uni-bound-for-cont-mml} follows from the bounds that we proved for \eqref{boun-interpol-firs} and \eqref{bound-interpol-anoth-map} in Proposition \ref{com-adj-stft-sympl} and Case 1 and the bounds in the Riesz-Thorin theorem \cite[Section 7, Theorem 2]{ben-pen}.
\indent \underline{Case 4: $p=1$, $q\in(1,\infty)$.} The proof is similar to the proof of Case 3.\\
\indent \underline{Case 5: $1<q<p<\infty$.} Let $\eta_2$ be as in Case 3. In view of Case 3 and Proposition \ref{com-adj-stft-sympl}, we infer that
\begin{align}
&\VV_{\boldsymbol{\psi}}\VV^*_{\boldsymbol{\varphi}}:\widetilde{L}^{p,1}_{\eta}(W\times W, dv_gd\lambda)\rightarrow \widetilde{L}^{p,1}_{\eta}(W\times W, dv_gd\lambda)\quad \mbox{and} \label{inter-bound-first-map-adv}\\
&\VV_{\boldsymbol{\psi}}\VV^*_{\boldsymbol{\varphi}}:\widetilde{L}^{p,p}_{\eta}(W\times W, dv_gd\lambda)\rightarrow \widetilde{L}^{p,p}_{\eta}(W\times W, dv_gd\lambda)\label{interp-bound-secon-map-adit-elm}
\end{align}
are well-defined and continuous and hence,
\begin{align*}
&QP_{\eta_2}(\VV_{\boldsymbol{\psi}}\VV^*_{\boldsymbol{\varphi}})P_{1/\eta_2}Q: L^{1,p}(W\times W)\rightarrow L^{1,p}(W\times W)\quad \mbox{and}\\
&QP_{\eta_2}(\VV_{\boldsymbol{\psi}}\VV^*_{\boldsymbol{\varphi}})P_{1/\eta_2}Q: L^{p,p}(W\times W)\rightarrow L^{p,p}(W\times W)
\end{align*}
are well-defined and continuous. The Riesz-Thorin interpolation theorem \cite[Section 7, Theorem 2]{ben-pen} implies that $QP_{\eta_2}(\VV_{\boldsymbol{\psi}}\VV^*_{\boldsymbol{\varphi}})P_{1/\eta_2}Q: L^{q,p}(W\times W)\rightarrow L^{q,p}(W\times W)$ is well-defined and continuous. In view of \eqref{equ-for-com-via-m}, this yields the claim; \eqref{uni-bound-for-cont-mml} follows from the bounds we proved for \eqref{inter-bound-first-map-adv} and \eqref{interp-bound-secon-map-adit-elm} in Case 3 and Proposition \ref{com-adj-stft-sympl} and the bounds in the Riesz-Thorin theorem \cite[Section 7, Theorem 2]{ben-pen}.\\
\indent \underline{Case 6: $1<p<q<\infty$.} In view of Case 4 and Proposition \ref{com-adj-stft-sympl},
\begin{align*}
&\VV_{\boldsymbol{\psi}}\VV^*_{\boldsymbol{\varphi}}:\widetilde{L}^{1,q}_{\eta_1}(W\times W, dv_gd\lambda)\rightarrow \widetilde{L}^{1,q}_{\eta_1}(W\times W, dv_gd\lambda)\quad \mbox{and}\\
&\VV_{\boldsymbol{\psi}}\VV^*_{\boldsymbol{\varphi}}:\widetilde{L}^{q,q}_{\eta_3}(W\times W, dv_gd\lambda)\rightarrow \widetilde{L}^{q,q}_{\eta_3}(W\times W, dv_gd\lambda)
\end{align*}
are well-defined and continuous with $\eta_1$ as in Case 3 and $\eta_3(X,\Xi):=|g_X|^{\frac{1}{2p}-\frac{1}{2q}}\eta(X,\Xi)$, $X,\Xi\in W$ (cf. Lemma \ref{admisibil-weig-mul} and Remark \ref{rem-for-adm-func-for-fromad}). The rest of the proof is analogous to the proof of Case 5; of course, now one interpolates between $L^{q,1}$ and $L^{q,q}$.\\
\indent \underline{Case 7: $p\in(1,\infty)$, $q=\infty$.} We claim that
\begin{align}\label{st-ad-dual-a-a}
\langle \VV_{\boldsymbol{\psi}}\VV^*_{\boldsymbol{\varphi}}G,\chi\rangle=\int_{W\times W} G(X,\Xi)\overline{\VV_{\boldsymbol{\varphi}}\VV^*_{\boldsymbol{\psi}} (|g|^{-1/2}\overline{\chi})(X,\Xi)} dv_g(X)d\Xi,
\end{align}
for all $G\in \ds\lim_{\substack{\longrightarrow \\ s\rightarrow\infty}} L^1_{(1+|\cdot|)^{-s}}(W\times W)$, $\chi\in\SSS(W\times W)$. Notice that the integral on the right is absolutely convergent in view of Proposition \ref{lemma-for-conti-of-stft-sympl} $(ii)$ and Proposition \ref{lemma-for-the-adj-of-stft-sympl} $(iii)$. It is straightforward to check that \eqref{st-ad-dual-a-a} holds when $G\in \SSS(W\times W)$ and the general case follows by density. Let $G\in \widetilde{L}^{p,\infty}_{\eta}(W\times W,dv_gd\lambda)$. Denote by $p'\in(1,\infty)$ the H\"older conjugate index to $p$ and let $\eta_2$ be as in Case 3. In view of Case 3 and \eqref{st-ad-dual-a-a}, we have the following chain of inequalities for all $\chi\in\SSS(W\times W)$ (Lemma \ref{admisibil-weig-mul} verifies that $1/\eta$ is uniformly admissible with slow variation constant $\tilde{r}$):
\begin{align*}
|\langle \VV_{\boldsymbol{\psi}}\VV^*_{\boldsymbol{\varphi}}G,\chi\rangle|&\leq \|G\|_{\widetilde{L}^{p,\infty}_{\eta}(W\times W,dv_gd\lambda)} \|\VV_{\boldsymbol{\varphi}}\VV^*_{\boldsymbol{\psi}} (|g|^{-1/2}\overline{\chi})\|_{\widetilde{L}^{p',1}_{1/\eta} (W\times W,dv_gd\lambda)}\\
&\leq C''\|\boldsymbol{\varphi}\|^{(k)}_{g,r'_0}\|\boldsymbol{\psi}\|^{(k)}_{g,r'_0} \|G\|_{\widetilde{L}^{p,\infty}_{\eta}(W\times W,dv_gd\lambda)} \||g|^{-1/2}\chi\|_{\widetilde{L}^{p',1}_{1/\eta} (W\times W,dv_gd\lambda)}\\
&= C''\|\boldsymbol{\varphi}\|^{(k)}_{g,r'_0}\|\boldsymbol{\psi}\|^{(k)}_{g,r'_0} \|G\|_{\widetilde{L}^{p,\infty}_{\eta}(W\times W,dv_gd\lambda)} \|\chi\|_{\widetilde{L}^{p',1}_{1/\eta_2} (W\times W)}.
\end{align*}
Since the strong dual of $\widetilde{L}^{p',1}_{1/\eta_2}(W\times W)$ is $\widetilde{L}^{p,\infty}_{\eta_2}(W\times W)= \widetilde{L}^{p,\infty}_{\eta}(W\times W,dv_gd\lambda)$, the claim follows.
\end{proof}

We are now ready to define the generalised modulation spaces. Let $\eta$ be a uniformly admissible weight with respect to $g$ with slow variation constant $\tilde{r}>0$ and let $\boldsymbol{\varphi}$ be a non-degenerate element of $\Conf_g(W;\min\{r_0,\tilde{r}\})$. For $p,q\in[1,\infty]$, we define the geometric modulation space $\widetilde{\MM}^{p,q}_{\eta}(W)$ by
$$
\widetilde{\MM}^{p,q}_{\eta}(W):= \{f\in\SSS'(W)\, |\, \VV_{\boldsymbol{\varphi}}f\in \widetilde{L}^{p,q}_{\eta}(W\times W, dv_gd\lambda)\}
$$
and we equip it with the norm $\|f\|_{\widetilde{\MM}^{p,q}_{\eta}}:= \|\VV_{\boldsymbol{\varphi}}f\|_{\widetilde{L}^{p,q}_{\eta}(W\times W,dv_gd\lambda)}$; the latter is indeed a norm in view of \eqref{compo-stft-with-adj-symp} and Lemma \ref{rem-about-part-of-unity} $(iii)$.

\begin{theorem}\label{mod-spa-donot-depend-on-partuni}
Let $p,q\in[1,\infty]$ and let $\eta$ be a uniformly admissible weight with respect to $g$ with slow variation constant $\tilde{r}>0$. Different choices of non-degenerate elements of $\Conf_g(W;\min\{r_0,\tilde{r}\})$ produce the same space $\widetilde{\MM}^{p,q}_{\eta}$ with equivalent norms. Furthermore, $\widetilde{\MM}^{p,q}_{\eta}$ is a Banach space and for any $\boldsymbol{\theta}\in\Conf_g(W;\min\{r_0,\tilde{r}\})$ the mappings
\begin{equation}\label{mappings-for-mod-spaces-sym-bbb}
\VV_{\boldsymbol{\theta}}: \widetilde{\MM}^{p,q}_{\eta}\rightarrow \widetilde{L}^{p,q}_{\eta}(W\times W,dv_gd\lambda)\quad \mbox{and}\quad \VV^*_{\boldsymbol{\theta}}: \widetilde{L}^{p,q}_{\eta}(W\times W,dv_gd\lambda)\rightarrow \widetilde{\MM}^{p,q}_{\eta}
\end{equation}
are well-defined and continuous.
\end{theorem}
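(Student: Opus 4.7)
The plan rests on two pillars already available in the excerpt: the reconstruction identity $\VV^*_{\boldsymbol{\varphi}}\VV_{\boldsymbol{\psi}} f = I_{\boldsymbol{\varphi}\overline{\boldsymbol{\psi}}} f$ from Proposition \ref{lemma-for-the-adj-of-stft-sympl} $(i)$ and the uniform boundedness of the cross-compositions $\VV_{\boldsymbol{\psi}}\VV^*_{\boldsymbol{\varphi}}$ on $\widetilde{L}^{p,q}_{\eta}(W\times W,dv_gd\lambda)$ from Theorem \ref{com-adj-stft-sympl1}. The preliminary step is to promote any non-degenerate $\boldsymbol{\varphi}$ into a family producing a genuine left inverse of $\VV_{\boldsymbol{\varphi}}$: by Lemma \ref{rem-about-part-of-unity} $(iii)$ one has $1/I_{|\boldsymbol{\varphi}|^2}\in S(1,g)$, and by Lemma \ref{rem-about-part-of-unity} $(ii)$ the family $\boldsymbol{\varphi}^{\flat}:=(1/I_{|\boldsymbol{\varphi}|^2})\boldsymbol{\varphi}$ again lies in $\Conf_g(W;\min\{r_0,\tilde{r}\})$; a direct computation gives $I_{\boldsymbol{\varphi}^{\flat}\overline{\boldsymbol{\varphi}}}\equiv 1$, whence $\VV^*_{\boldsymbol{\varphi}^{\flat}}\VV_{\boldsymbol{\varphi}}=\operatorname{Id}$ on $\SSS'(W)$.

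Window independence with equivalence of norms is then immediate: for two non-degenerate $\boldsymbol{\varphi},\boldsymbol{\psi}$ and any $f\in\SSS'(W)$, the factorisation
\begin{equation*}
\VV_{\boldsymbol{\psi}}f=\VV_{\boldsymbol{\psi}}\VV^*_{\boldsymbol{\varphi}^{\flat}}\VV_{\boldsymbol{\varphi}}f
\end{equation*}
combined with Theorem \ref{com-adj-stft-sympl1} applied to the pair $(\boldsymbol{\psi},\boldsymbol{\varphi}^{\flat})$ yields $\|\VV_{\boldsymbol{\psi}}f\|_{\widetilde{L}^{p,q}_{\eta}}\leq C\|\VV_{\boldsymbol{\varphi}}f\|_{\widetilde{L}^{p,q}_{\eta}}$, and symmetry gives the reverse inequality. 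The continuity of $\VV_{\boldsymbol{\theta}}:\widetilde{\MM}^{p,q}_{\eta}\to\widetilde{L}^{p,q}_{\eta}$ for an arbitrary (possibly degenerate) $\boldsymbol{\theta}$ rests on the same factorisation $\VV_{\boldsymbol{\theta}}f=\VV_{\boldsymbol{\theta}}\VV^*_{\boldsymbol{\varphi}^{\flat}}\VV_{\boldsymbol{\varphi}}f$ and Theorem \ref{com-adj-stft-sympl1} applied to $(\boldsymbol{\theta},\boldsymbol{\varphi}^{\flat})$, noting that the latter requires only membership in $\Conf_g(W;r'_0)$ and no non-degeneracy. The continuity of $\VV^*_{\boldsymbol{\theta}}:\widetilde{L}^{p,q}_{\eta}\to\widetilde{\MM}^{p,q}_{\eta}$ is dual: for $G\in\widetilde{L}^{p,q}_{\eta}$ one has $\VV^*_{\boldsymbol{\theta}}G\in\SSS'(W)$ by Proposition \ref{lemma-for-the-adj-of-stft-sympl} $(i)$, and $\|\VV^*_{\boldsymbol{\theta}}G\|_{\widetilde{\MM}^{p,q}_{\eta}}=\|\VV_{\boldsymbol{\varphi}}\VV^*_{\boldsymbol{\theta}}G\|_{\widetilde{L}^{p,q}_{\eta}}$ is directly controlled by Theorem \ref{com-adj-stft-sympl1} applied to $(\boldsymbol{\varphi},\boldsymbol{\theta})$.

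For completeness, let $(f_n)\subset\widetilde{\MM}^{p,q}_{\eta}$ be Cauchy. Then $G_n:=\VV_{\boldsymbol{\varphi}}f_n$ is Cauchy in the Banach space $\widetilde{L}^{p,q}_{\eta}(W\times W,dv_gd\lambda)$ and converges there to some $G$. Via the continuous inclusion $\widetilde{L}^{p,q}_{\eta}\hookrightarrow \ds\lim_{\substack{\longrightarrow\\ s\to\infty}}L^1_{(1+|\cdot|)^{-s}}(W\times W)$ and the continuity of $\VV^*_{\boldsymbol{\varphi}^{\flat}}$ on the latter space, $f_n=\VV^*_{\boldsymbol{\varphi}^{\flat}}G_n$ converges in $\SSS'(W)$ to $f:=\VV^*_{\boldsymbol{\varphi}^{\flat}}G$. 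Proposition \ref{lemma-for-conti-of-stft-sympl} $(i)$ then forces $\VV_{\boldsymbol{\varphi}}f_n\to\VV_{\boldsymbol{\varphi}}f$ in $\SSS'(W\times W)$, and since $G_n\to G$ in $\SSS'(W\times W)$ as well, uniqueness of distributional limits gives $G=\VV_{\boldsymbol{\varphi}}f$; hence $f\in\widetilde{\MM}^{p,q}_{\eta}$ and $f_n\to f$ in its norm.

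The delicate point I would flag as the main obstacle is the coordination of several ambient topologies in the completeness step: one transfers the Cauchy sequence through $\VV^*_{\boldsymbol{\varphi}^{\flat}}$ into $\SSS'(W)$ to produce a candidate limit, then pushes it back through $\VV_{\boldsymbol{\varphi}}$ and matches the resulting tempered distribution with the $\widetilde{L}^{p,q}_{\eta}$-limit inside the common space $\SSS'(W\times W)$. Once these identifications are in place, all three claims collapse to direct applications of Theorem \ref{com-adj-stft-sympl1} together with the reconstruction identity $\VV^*_{\boldsymbol{\varphi}^{\flat}}\VV_{\boldsymbol{\varphi}}=\operatorname{Id}$.
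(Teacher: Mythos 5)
Your proof is correct and follows essentially the same route as the paper's. You normalise by forming $\boldsymbol{\varphi}^{\flat}=\boldsymbol{\varphi}/I_{|\boldsymbol{\varphi}|^2}$ and use the reconstruction identity $\VV^*_{\boldsymbol{\varphi}^{\flat}}\VV_{\boldsymbol{\varphi}}=\operatorname{Id}$ together with Theorem \ref{com-adj-stft-sympl1}, which is exactly the paper's mechanism (the paper simply attaches the factor $1/I_{|\boldsymbol{\varphi}|^2}$ to the outer window, writing $\VV_{\boldsymbol{\psi}}f=\VV_{\boldsymbol{\psi}/I_{|\boldsymbol{\varphi}|^2}}\VV^*_{\boldsymbol{\varphi}}\VV_{\boldsymbol{\varphi}}f$, an entirely cosmetic difference). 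The only place you take a slightly longer path is the completeness argument: you transfer the Cauchy sequence to $\SSS'(W)$ and then match limits in $\SSS'(W\times W)$, whereas the paper, having already established continuity of $\VV^*_{\boldsymbol{\theta}}:\widetilde{L}^{p,q}_{\eta}\to\widetilde{\MM}^{p,q}_{\eta}$, simply notes that $f_j=\VV^*_{\boldsymbol{\varphi}/I_{|\boldsymbol{\varphi}|^2}}\VV_{\boldsymbol{\varphi}}f_j\to\VV^*_{\boldsymbol{\varphi}/I_{|\boldsymbol{\varphi}|^2}}G$ directly in $\widetilde{\MM}^{p,q}_{\eta}$. Both are valid; the paper's is marginally shorter.
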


\begin{proof} Set $r'_0:=\min\{r_0,\tilde{r}\}$. Let $\boldsymbol{\varphi}$ and $\boldsymbol{\psi}$ be two non-degenerate elements of $\Conf_g(W;r'_0)$. Lemma \ref{rem-about-part-of-unity} implies that $I_{|\boldsymbol{\varphi}|^2},1/I_{|\boldsymbol{\varphi}|^2}\in S(1,g)$ and $\boldsymbol{\psi}/I_{|\boldsymbol{\varphi}|^2}\in\Conf_g(W;r'_0)$. In view of \eqref{compo-stft-with-adj-symp}, we infer
\begin{equation}\label{equ-for-com-mod-inv}
\VV_{\boldsymbol{\psi}/I_{|\boldsymbol{\varphi}|^2}} \VV^*_{\boldsymbol{\varphi}}\VV_{\boldsymbol{\varphi}}f= \VV_{\boldsymbol{\psi}/I_{|\boldsymbol{\varphi}|^2}}(I_{|\boldsymbol{\varphi}|^2}f)= \VV_{\boldsymbol{\psi}}f,\quad f\in\SSS'(W).
\end{equation}
Hence, if $f\in\SSS'(W)$ is such that $\VV_{\boldsymbol{\varphi}}f\in \widetilde{L}^{p,q}_{\eta}(W\times W,dv_gd\lambda)$, then Theorem \ref{com-adj-stft-sympl1} together with \eqref{equ-for-com-mod-inv} imply that $\VV_{\boldsymbol{\psi}}f\in \widetilde{L}^{p,q}_{\eta}(W\times W,dv_gd\lambda)$ and
$$
\|\VV_{\boldsymbol{\psi}}f\|_{\widetilde{L}^{p,q}_{\eta}(W\times W,dv_gd\lambda)}\leq C' \|\VV_{\boldsymbol{\varphi}} f\|_{\widetilde{L}^{p,q}_{\eta}(W\times W,dv_gd\lambda)}.
$$
This proves the independence of $\widetilde{\MM}^{p,q}_{\eta}$ of the non-degenerate element of $\Conf_g(W;r'_0)$ employed as well as the equivalence of the norms induced on $\widetilde{\MM}^{p,q}_{\eta}$. Now, the second mapping in \eqref{mappings-for-mod-spaces-sym-bbb} is well-defined and continuous in view of Theorem \ref{com-adj-stft-sympl1}. To prove this for the first mapping in \eqref{mappings-for-mod-spaces-sym-bbb}, pick a non-degenerate $\boldsymbol{\varphi}\in\Conf_g(W;r'_0)$. Then, in the same way as for \eqref{equ-for-com-mod-inv}, we infer $\VV_{\boldsymbol{\theta}}f=\VV_{\boldsymbol{\theta}/I_{|\boldsymbol{\varphi}|^2}} \VV^*_{\boldsymbol{\varphi}}\VV_{\boldsymbol{\varphi}}f$, $ f\in\SSS'(W)$. Consequently, Theorem \ref{com-adj-stft-sympl1} implies that the first mapping in \eqref{mappings-for-mod-spaces-sym-bbb} is also well-defined and continuous. It remains to prove that $\widetilde{\MM}^{p,q}_{\eta}$ is a Banach space. Let $f_j$, $j\in\ZZ_+$, be a Cauchy sequence in $\widetilde{\MM}^{p,q}_{\eta}$. There exists $G\in \widetilde{L}^{p,q}_{\eta}(W\times W,dv_gd\lambda)$ such that $\VV_{\boldsymbol{\varphi}}f_j\rightarrow G$ in $\widetilde{L}^{p,q}_{\eta}(W\times W,dv_gd\lambda)$. In view of \eqref{compo-stft-with-adj-symp}, we infer
$$
f_j=\VV^*_{\boldsymbol{\varphi}/I_{|\boldsymbol{\varphi}|^2}}\VV_{\boldsymbol{\varphi}}f_j \rightarrow \VV^*_{\boldsymbol{\varphi}/I_{|\boldsymbol{\varphi}|^2}}G\in \widetilde{\MM}^{p,q}_{\eta}\quad \mbox{in}\,\, \widetilde{\MM}^{p,q}_{\eta},
$$
which completes the proof.
\end{proof}

\begin{remark}\label{con-incl-of-s-in-genmodspace}
If $\eta_1$ and $\eta_2$ are uniformly admissible and $\eta_1/\eta_2\in L^{\infty}(W\times W)$, then
$$
\SSS(W)\subseteq \widetilde{\MM}^{p,q}_{\eta_2}(W)\subseteq \widetilde{\MM}^{p,q}_{\eta_1} \subseteq \SSS'(W)\quad \mbox{continuously, for all}\,\, p,q\in[1,\infty].
$$
\end{remark}

\begin{remark}\label{rem-for-mod-spa-clas-and-geo-sympl-trs}
Let $p,q\in[1,\infty]$ and let $\eta:\RR^{4n}\rightarrow (0,\infty)$ be a measurable function which is moderate with respect to the Beurling weight $(1+|\cdot|)^{\tau}$, for some $\tau\geq 0$. Similarly as in the case of the classical modulation space $M^{p,q}_{\eta}(\RR^{2n})$, one can define the modulation space $\widetilde{M}^{p,q}_{\eta}(\RR^{2n})$ associated to $\widetilde{L}^{p,q}_{\eta}(\RR^{2n}\times \RR^{2n})$ (see \cite[Section 4.2]{D-P-P-V}); of course $M^{p,p}_{\eta}(\RR^{2n})= \widetilde{M}^{p,p}_{\eta}(\RR^{2n})$. If $g$ is the standard Euclidean metric on $\RR^{2n}$, setting $\widetilde{\eta}(X,\Xi):=\eta(X,-\sigma\Xi)$, $X,\Xi\in\RR^{2n}$, Remark \ref{rem-for-euc-metric-stand} and Remark \ref{uni-adm-for-stand-euc-metr} give
$$
\widetilde{\MM}^{p,q}_{\eta}(\RR^{2n})= \widetilde{M}^{p,q}_{\widetilde{\eta}}(\RR^{2n}),\, p,q\in[1,\infty],\quad \mbox{and}\quad \widetilde{\MM}^{p,p}_{\eta}(\RR^{2n})= M^{p,p}_{\widetilde{\eta}}(\RR^{2n}),\, p\in[1,\infty].
$$
\end{remark}

\section{Almost diagonalisation}\label{sec dia}

\subsection{Structural properties of $S(M,g)$}\label{structur}

As a first benefit of the above construction, we show that the Weyl-H\"ormander symbol class $S(M,g)$ can be represented as an intersection of weighted geometric modulation spaces. Besides being of independent interest, this is one of the key ingredients in the proof of our main result. We start by introducing the following notation. Notice that the function $W\times W\rightarrow (0,\infty)$, $(X,\Xi)\mapsto 1+g^{\sigma}_X(\Xi)$, is uniformly admissible with slow variation constant $r_0$. For $1\leq p\leq \infty$ and $s\geq 0$, define
$$
u_{p,s}:W\times W\rightarrow (0,\infty),\quad u_{p,s}(X,\Xi):=|g_X|^{\frac{1}{2}(1-\frac{1}{p})}(1+g^{\sigma}_X(\Xi))^s.
$$
In view of Remark \ref{rem-for-adm-func-for-fromad} and Lemma \ref{admisibil-weig-mul}, $u_{p,s}$ is uniformly admissible with slow variation constant $r_0$. Consequently, if $M$ is a $g$-admissible weight with slow variation constant $r$, Remark \ref{rem-for-adm-func-for-fromad} and Lemma \ref{admisibil-weig-mul} verify that $(X,\Xi)\mapsto u_{p,s}(X,\Xi)/ M(X)$ is uniformly admissible with slow variation constant $\min\{r,r_0\}$. Notice that
\begin{equation}\label{inclusion-for-mod-space-for-weight}
\widetilde{\MM}^{\infty,p}_{u_{p,s_2}/M}\subseteq \widetilde{\MM}^{\infty,p}_{u_{p,s_1}/M}\,\, \mbox{continuously, when}\,\, s_2\geq s_1.
\end{equation}

\begin{proposition}\label{rep-res-hor}
Let $M$ be a $g$-admissible weight and $1\leq p\leq \infty$. Then
\begin{equation}\label{ident-hormand-class-modspac-prl}
S(M,g)=\lim_{\substack{\longleftarrow\\ s\rightarrow \infty}}\widetilde{\MM}^{\infty,p}_{u_{p,s}/M}\quad \mbox{topologically},
\end{equation}
where the linking mappings in the projective limit are the canonical inclusions \eqref{inclusion-for-mod-space-for-weight}.
\end{proposition}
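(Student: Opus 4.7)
The plan is to establish the two continuous inclusions $S(M,g) \hookrightarrow \widetilde{\MM}^{\infty,p}_{u_{p,s}/M}$ for every $s \geq 0$, and $\bigcap_s \widetilde{\MM}^{\infty,p}_{u_{p,s}/M} \hookrightarrow S(M,g)$, separately; the topological identity \eqref{ident-hormand-class-modspac-prl} then follows at once from the explicit seminorm bounds produced in each direction. Throughout, I fix a non-degenerate $\boldsymbol{\varphi} \in \Conf_g(W; r'_0)$ with $r'_0 \leq \min\{r_0,\tilde r\}$, where $\tilde r$ is a common slow-variation constant for the family $\{u_{p,s}/M\}_s$, so that Theorem \ref{mod-spa-donot-depend-on-partuni} applies and the modulation norms do not depend on the particular $\boldsymbol{\varphi}$.

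For the forward direction, if $a \in S(M,g)$, then a Leibniz-rule argument combined with the slow variation of $M$ and \eqref{ineq-for-metric-p-1} yields, for each $k$, the uniform bound $\|a\,\overline{\varphi_X}\|^{(k)}_{g_X,U_{X,r'_0}} \leq C_k\,\|a\|^{(k)}_{S(M,g)}\,\|\boldsymbol{\varphi}\|^{(k)}_{g,r'_0}\,M(X)$ for a.a.\ $X$. Since $\VV_{\boldsymbol{\varphi}} a(X,\Xi) = \mathcal{F}_\sigma(a\,\overline{\varphi_X})(\Xi)$, iterated integration by parts in a direction $\theta = \theta(X,\Xi) \in W$ with $g_X(\theta) = 1$ and $[\Xi,\theta] = g^\sigma_X(\Xi)^{1/2}$ (in the style of the proof of Proposition \ref{lemma-for-conti-of-stft-sympl}(ii)) together with the confinement decay of $a\,\overline{\varphi_X}$ yield, for every $N$,
\[
|\VV_{\boldsymbol{\varphi}} a(X,\Xi)| \leq C_N\,\|a\|^{(N)}_{S(M,g)}\,M(X)\,|g_X|^{-1/2}\,(1+g^\sigma_X(\Xi))^{-N},
\]
the factor $|g_X|^{-1/2}$ being the order of magnitude of $\int (1+g^\sigma_X(Y-U_{X,r'_0}))^{-m}\,dY$ for $m$ large (dominated by the Lebesgue volume of $U_{X,r'_0}$). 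Plugging into the $\widetilde{L}^{\infty,p}_{u_{p,s}/M}$-norm and choosing $N > s+n/p$, the $|g_X|$-powers combine to $|g_X|^{-p/2+(p-1)/2} = |g_X|^{-1/2}$ while the convergent $\Xi$-integral contributes a further $|g_X|^{1/2}$; the two cancel exactly, giving the required $X$-uniform bound with a continuous dependence on $\|a\|^{(N)}_{S(M,g)}$.

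For the reverse direction, take $\boldsymbol{\varphi}$ smooth (Lemma \ref{lem-for-smooth-tem-part-of-unity-in-both-coord}) and set $\boldsymbol{\psi} := \boldsymbol{\varphi}/I_{|\boldsymbol{\varphi}|^2} \in \Conf_g(W;r'_0)$, legitimate by Lemma \ref{rem-about-part-of-unity}. For $a \in \bigcap_s \widetilde{\MM}^{\infty,p}_{u_{p,s}/M}$, the inversion identity \eqref{compo-stft-with-adj-symp} together with Proposition \ref{lemma-for-the-adj-of-stft-sympl}(ii) furnishes the integral representation
\[
a(Y) = \int_{W \times W} e^{2\pi i[\Xi,Y]}\,\VV_{\boldsymbol{\varphi}} a(X,\Xi)\,\psi_X(Y)\,dv_g(X)\,d\Xi \quad \text{in } \SSS'(W).
\]
Differentiating $k$ times in $Y$ in directions $T_1,\ldots,T_k$ (justified a posteriori by the absolute convergence of the resulting bound), applying Leibniz, estimating $|[\Xi,T_j]| \leq g^\sigma_X(\Xi)^{1/2}\,g_X(T_j)^{1/2}$, and using the confinement of $\boldsymbol{\psi}$ reduces the problem to controlling
\[
\int |\VV_{\boldsymbol{\varphi}} a(X,\Xi)|\,\prod_j g_X(T_j)^{1/2}\,(1+g^\sigma_X(\Xi))^{k/2}\,(1+g^\sigma_X(Y-U_{X,r'_0}))^{-N}\,dv_g(X)\,d\Xi.
\]
H\"older's inequality in $\Xi$ with exponents $p, p'$ bounds the inner $\Xi$-integral, for $s > k/2 + n/p'$, by $C\,\|a\|_{\widetilde{\MM}^{\infty,p}_{u_{p,s}/M}}\,M(X)$; the crucial cancellation is that the factor $|g_X|^{-(p-1)/(2p)}$ produced by the definition of the norm is exactly compensated by the $|g_X|^{1/(2p')} = |g_X|^{(p-1)/(2p)}$ coming from the convergent $L^{p'}$-integral. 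Replacing $g_X(T_j)^{1/2}$ by $g_Y(T_j)^{1/2}$ via \eqref{ineq-for-metric-p-1} (at the cost of polynomial factors in $(1+g^\sigma_X(Y-U_{X,r'_0}))$ absorbed into the decay by enlarging $N$), and invoking the standard Bony-Lerner-type estimate $\int M(X)(1+g^\sigma_X(Y-U_{X,r'_0}))^{-N'}\,dv_g(X) \leq C M(Y)$ (a consequence of the admissibility of $M$ combined with \eqref{ineq-for-metric-p-3-1}), one obtains $|a^{(k)}(Y;T_1,\ldots,T_k)| \leq C\,\|a\|_{\widetilde{\MM}^{\infty,p}_{u_{p,s(k)}/M}}\,M(Y)\,\prod_j g_Y(T_j)^{1/2}$ for a suitable $s(k)$, establishing both membership in $S(M,g)$ and continuity of the reverse inclusion.

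The main obstacle lies in the reverse direction, specifically in the careful bookkeeping of the metric-dependent weights: ensuring that the $|g_X|$-powers cancel exactly in the H\"older step, and that all base-point changes between $g_X$ and $g_Y$ (and between $M(X)$ and $M(Y)$) can be absorbed into the polynomial decay $(1+g^\sigma_X(Y-U_{X,r'_0}))^{-N}$ via the inequalities assembled in Subsection \ref{properties}.
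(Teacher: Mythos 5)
Your argument is correct, and the key cancellations of $|g_X|$-powers that you flag as "the main obstacle" are real and are handled correctly: in the forward direction, $(|g_X|^{-1/2})^p \cdot |g_X|^{\frac{p-1}{2}} \cdot |g_X|^{1/2} = 1$ inside the $p$-th root, and in the reverse direction the modulation-norm factor $|g_X|^{-\frac{p-1}{2p}}$ is exactly offset by the $|g^\sigma_X|^{-\frac{1}{2p'}} = |g_X|^{\frac{p-1}{2p}}$ from the H\"older conjugate integral.

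The paper, however, takes a structurally simpler route. It first establishes (by a single H\"older step together with $|g_X||g^\sigma_X|=1$) the nesting $\mathfrak{M}_{p_2} \subseteq \mathfrak{M}_{p_1}$ continuously for $p_1 \leq p_2$, where $\mathfrak{M}_p$ denotes the projective limit. This reduces the whole statement to the two \emph{extremal} inclusions $S(M,g) \subseteq \mathfrak{M}_\infty$ and $\mathfrak{M}_1 \subseteq S(M,g)$, each of which avoids any further interpolation between $p$ and $p'$: the $\mathfrak{M}_\infty$ estimate needs no $\Xi$-integrability argument, and the $\mathfrak{M}_1$ estimate needs no H\"older. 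In the reverse direction, the paper also exploits the partition of unity $I_{\boldsymbol{\varphi}} = 1$ and the strict support $\supp\varphi_X \subseteq U_{X,r'_0}$ rather than the adjoint representation $a = \VV^*_{\boldsymbol{\psi}}\VV_{\boldsymbol{\varphi}}a$; this pushes all $Y$-dependence onto the oscillatory factor via $\mathcal{F}_\sigma\mathcal{F}_\sigma = \operatorname{Id}$ applied to $f\varphi_X$, so the Leibniz expansion of $\partial^l_Y(e^{2\pi i[\Xi,Y]}\psi_X(Y))$ that appears in your version never arises. Your approach trades this simplicity for uniform control in $p$, which also works but costs the delicate bookkeeping you describe.

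Two small points worth tightening. First, your forward estimate $\|a\overline{\varphi_X}\|^{(k)}_{g_X,U_{X,r'_0}} \leq C_k\|a\|^{(k)}_{S(M,g)}\|\boldsymbol{\varphi}\|^{(k)}_{g,r'_0}M(X)$ should invoke a \emph{higher} confinement norm of $\boldsymbol{\varphi}$ (of order roughly $k(1+N_0)+2N_M$, where $N_M$ is a temperance constant for $M$) in order to absorb the factors $(1+g^\sigma_X(Y-U_{X,r'_0}))^{lN_0/2}$ from converting $g_Y(T_j)$ to $g_X(T_j)$ and $(1+g^\sigma_X(Y-U_{X,r'_0}))^{N_M}$ from converting $M(Y)$ to $M(X)$. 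Second, the "Bony--Lerner-type" estimate $\int_W M(X)(1+g^\sigma_X(Y-U_{X,r'_0}))^{-N'}dv_g(X) \leq CM(Y)$ does hold, but the step $M(X) \leq CM(Y)(1+g^\sigma_X(Y-U_{X,r'_0}))^{N_M}$ is not an immediate consequence of temperance alone --- it requires a two-step argument: slow variation of $M$ to pass from $X$ to $X' \in U_{X,r'_0}$, temperance at $X'$, and slow variation of $g$ to replace $g^\sigma_{X'}$ by $g^\sigma_X$, then taking the infimum over $X'$. This is exactly the kind of base-point manipulation you flag as the delicate part, so it deserves to be made explicit.
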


\begin{remark}\label{new151}
In view of Remark \ref{rem-for-mod-spa-clas-and-geo-sympl-trs}, this generalises \cite[Lemma 6.1]{gro-rz}.
\end{remark}

\begin{proof} For simplicity, denote the projective limit on the right in \eqref{ident-hormand-class-modspac-prl} by $\mathfrak{M}_p$. In view of Remark \ref{con-incl-of-s-in-genmodspace}, $\SSS(W)\subseteq \mathfrak{M}_p\subseteq \SSS'(W)$ continuously. Let $r'_0\leq \min\{r_0,1\}$ be a slow variation constant for $u_{p,s}/M$, $s\geq 0$, and let $\boldsymbol{\varphi}$ be the smooth non-degenerate element of $\Conf_g(W;r'_0)$ constructed in Example \ref{exi-of-good-par-off} $(ii)$; recall that $I_{\boldsymbol{\varphi}}(Y)=1$, $Y\in W$, and $\supp\varphi_X\subseteq U_{X,r'_0}$, $X\in W$, where we set $\varphi_X:=\boldsymbol{\varphi}(X)$, $X\in W$. We employ $\boldsymbol{\varphi}$ to define the norm on $\widetilde{\MM}^{\infty,p}_{u_{p,s}/M}$, $s\geq 0$, $p\in[1,\infty]$. We claim that $\mathfrak{M}_{p_2}\subseteq \mathfrak{M}_{p_1}$ continuously when $1\leq p_1\leq p_2\leq \infty$. To see this, assume that $1\leq p_1< p_2<\infty$; the proof of the case $p_2=\infty$ is analogous. Let $f\in \mathfrak{M}_{p_2}$. Employing H\"older's inequality with $\widetilde{p}=p_2/p_1$ and $\widetilde{q}=p_2/(p_2-p_1)$ together with the fact $|g_X||g^{\sigma}_X|=1$, we infer (for any $s\geq 0$)
\begin{align*}
\frac{|g_X|^{\frac{1}{2}(1-\frac{1}{p_1})}}{M(X)}&\left(\int_W |\VV_{\boldsymbol{\varphi}}f(X,\Xi)|^{p_1}(1+g^{\sigma}_X(\Xi))^{sp_1}d\Xi\right)^{1/p_1}\\
&\leq\frac{|g_X|^{\frac{1}{2}(1-\frac{1}{p_2})}}{M(X)}\left(\int_W |\VV_{\boldsymbol{\varphi}}f(X,\Xi)|^{p_2} (1+g^{\sigma}_X(\Xi))^{(s+n+1)p_2}d\Xi\right)^{1/p_2}\\
&\quad\cdot\left(\int_W (1+g^{\sigma}_X(\Xi))^{-(n+1)p_1p_2/(p_2-p_1)} |g^{\sigma}_X|^{1/2}d\Xi\right)^{(p_2-p_1)/(p_1p_2)}.
\end{align*}
Since the very last integral is uniformly bounded by a single constant for all $X\in W$, the claim follows. Consequently, to prove the claim in the proposition, it suffices to show that
\begin{align}\label{inc-to-be-proved-for-the-por-thssm}
S(M,g)\subseteq \mathfrak{M}_{\infty}\quad \mbox{and}\quad \mathfrak{M}_1\subseteq S(M,g),\quad \mbox{continuously}.
\end{align}
\indent To prove the first inclusion in \eqref{inc-to-be-proved-for-the-por-thssm}, let $f\in S(M,g)$ and $k\in \ZZ_+$ be arbitrary. Write
$$
\VV_{\boldsymbol{\varphi}}f(X,\Xi)=\int_W e^{-2\pi i[\Xi,Y]}f(Y)\varphi_X(Y)dY.
$$
For every $X,\Xi\in W$ fixed, there exists $\theta=\theta(X,\Xi)\in W$ such that $g_X(\theta)=1$ and $[\Xi,\theta]=g^{\sigma}_X(\Xi)^{1/2}$. Notice that
$$
(1-(2\pi i)^{-1}\partial_{\theta;Y})^{2k}e^{-2\pi i[\Xi,Y]}=(1+g^{\sigma}_X(\Xi)^{1/2})^{2k}e^{-2\pi i[\Xi,Y]}.
$$
We estimate as follows:
\begin{align*}
|\VV_{\boldsymbol{\varphi}}&f(X,\Xi)|u_{\infty,k}(X,\Xi)/M(X)\\
&\leq C'_1|g_X|^{1/2}\sum_{k'+k''\leq 2k}\int_W M(Y)^{-1}|\partial_{\theta}^{k'}f(Y)||\partial_{\theta;Y}^{k''}\varphi_X(Y)|dY\\
&\leq C'_1\|f\|^{(2k)}_{S(M,g)}|g_X|^{1/2}\sum_{k'+k''\leq 2k}\int_W g_Y(\theta)^{k'/2} |\partial_{\theta;Y}^{k''}\varphi_X(Y)| dY\\
&\leq C'_2\|f\|^{(2k)}_{S(M,g)}\|\varphi_X\|^{(N)}_{g_X,U_{X,r'_0}}\int_W (1+g^{\sigma}_X(Y-U_{X,r'_0}))^{-n-1}|g_X|^{1/2}dY,
\end{align*}
for sufficiently large $N\in\ZZ_+$. Since $1+g_X(X-Y)\leq C'_3(1+g^{\sigma}_X(Y-U_{X,r'_0}))$, the very last integral is uniformly bounded by a single constant for all $X\in W$ and the validity of the first part of \eqref{inc-to-be-proved-for-the-por-thssm} follows.\\
\indent We turn our attention to the second inclusion in \eqref{inc-to-be-proved-for-the-por-thssm}. Let $f\in \mathfrak{M}_1$. For $T_1,\ldots, T_k\in W\backslash\{0\}$, $k\in\NN$, we infer
$$
\mathcal{F}_{\sigma}(\partial_{T_1}\ldots\partial_{T_k}(f\varphi_X))=(2\pi i)^k[\cdot,T_1]\cdot\ldots \cdot[\cdot,T_k]\mathcal{F}_{\sigma}(f\varphi_X)\in L^1(W),\;\; \mbox{a.a.}\; X,
$$
and consequently $\partial_{T_1}\ldots\partial_{T_k}(f\varphi_X)\in\mathcal{F}_{\sigma}L^1(W)\subseteq \mathcal{C}_0(W)$, a.a. $X$. Thus, in view of \eqref{est-from-belo-and-above-part-unit}, $\partial_{T_1}\ldots \partial_{T_k}(f\psi)\in\mathcal{C}(W)$, for all $\psi\in\DD(W)$ (since the interiors of the sets in \eqref{est-from-belo-and-above-part-unit} cover $W$ when $X$ varies in $W\backslash(\mbox{a nullset})$). We deduce $f\in\mathcal{C}^{\infty}(W)$. Let $k\in \NN$ and $l\leq k$ be arbitrary but fixed and set $N:=(N_0+1)(n+1)+N_0n+N_0k/2$. Since $\supp\varphi_X\subseteq U_{X,r'_0}$, for $T_1,\ldots,T_l\in W\backslash\{0\}$, we infer
\begin{align}
|\partial_{T_1}\ldots\partial_{T_l}f(Y)|/M(Y)&\leq C'_1\int_W |\partial_{T_1;Y}\ldots\partial_{T_l;Y}(f(Y)\varphi_X(Y))|/M(X)dv_g(X)\nonumber\\
&= C'_1\int_W \frac{|\partial_{T_1;Y}\ldots\partial_{T_l;Y}(f(Y)\varphi_X(Y))|} {(1+g^{\sigma}_X(Y-U_{X,r'_0}))^NM(X)} dv_g(X).\label{est-for-the-opp-incls}
\end{align}
Employing $\mathcal{F}_{\sigma}\mathcal{F}_{\sigma}=\operatorname{Id}$ and \eqref{ineq-for-metric-p-1}, we estimate as follows:
\begin{align*}
|\partial_{T_1;Y}&\ldots\partial_{T_l;Y}(f(Y)\varphi_X(Y))|/M(X)\\
&= \frac{(2\pi)^l}{M(X)}\left|\int_W e^{-2\pi i[Y,\Xi]}\mathcal{F}_{\sigma}(f\varphi_X)(\Xi)\prod_{j=1}^l[\Xi,T_j]d\Xi\right|\\
&\leq \frac{(2\pi)^l\prod_{j=1}^lg_Y(T_j)^{1/2}}{M(X)}\int_W |\VV_{\boldsymbol{\varphi}}f(X,\Xi)|g^{\sigma}_Y(\Xi)^{l/2}d\Xi\\
&\leq \frac{C'_2(1+g^{\sigma}_X(Y-U_{X,r'_0}))^{kN_0/2} \prod_{j=1}^lg_Y(T_j)^{1/2}}{M(X)}\int_W |\VV_{\boldsymbol{\varphi}}f(X,\Xi)|g^{\sigma}_X(\Xi)^{l/2}d\Xi.
\end{align*}
Plugging this estimate into \eqref{est-for-the-opp-incls}, we deduce
$$
\frac{|\partial_{T_1}\ldots \partial_{T_l}f(Y)|}{M(Y)\prod_{j=1}^lg_Y(T_j)^{1/2}}\leq C'_3\|f\|_{\widetilde{\MM}^{\infty,1}_{u_{1,k/2}/M}} \int_W \frac{dv_g(X)}{(1+g^{\sigma}_X(Y-U_{X,r'_0}))^{(N_0+1)(n+1)+N_0n}}.
$$
In view of \eqref{ineq-for-metric-p-3-1}, the integral is uniformly bounded for all $Y\in W$ by a constant and the proof of the second part of \eqref{inc-to-be-proved-for-the-por-thssm} is complete.
\end{proof}

\subsection{Characterisation of $S(M,g)$}\label{char-M,g}

Before we state the main result of the article, we introduce the following notation. As standard, we denote by $\Sp(W)$ the Lie group of symplectic maps on $W$ and we denote by $\Mp(W)$ the metaplectic group as defined by Weil \cite{weil}; i.e. it is a two-fold cover of $\Sp(W)$ (the Weil representation).\footnote{Some authors construct $\Mp(W)$ as a circle cover of $\Sp(W)$.} Let $(\cdot,\cdot)_W$ be an inner product on $W$ and let $L:W\rightarrow W'$ be the induced isomorphism. Notice that $\langle LX,Y\rangle=(X,Y)_W$, for all $X,Y\in W$. Denote by $\Sym(W,(\cdot,\cdot)_W)$ the vector space of all symmetric operators on the Hilbert space $(W,(\cdot,\cdot)_W)$ and let $\Sym_+(W,(\cdot,\cdot)_W)$ be the subset of $\Sym(W,(\cdot,\cdot)_W)$ consisting of all symmetric positive-definite operators; $\Sym_+(W,(\cdot,\cdot)_W)$ is an open convex cone in $\Sym(W,(\cdot,\cdot)_W)$. For any $A\in\Sym_+(W,(\cdot,\cdot)_W)$ we denote by $A^{1/2}$ the unique symmetric positive-definite square root of $A$. Recall that the mapping $\Sym_+(W,(\cdot,\cdot)_W)\rightarrow \Sym_+(W,(\cdot,\cdot)_W)$, $A\mapsto A^{1/2}$, is smooth. If $\gamma$ is any positive definite quadratic form on $W$ with $Q:W\rightarrow W'$ being the induced linear map, then $L^{-1}Q\in\Sym_+(W,(\cdot,\cdot)_W)$ and hence its square root $(L^{-1}Q)^{1/2}$ is a well-defined element of $\Sym_+(W,(\cdot,\cdot)_W)$.

\begin{lemma}\label{lema-for-sym-root-quadr}${}$
\begin{itemize}
\item[$(i)$] Let $A:W\rightarrow W'$ be an isomorphism which satisfies ${}^tA=A$. Then $\sigma^{-1}A$ is symplectic if and only if $\sigma^{-1}A\sigma^{-1}A=-I$.
\item[$(ii)$] Let $(\cdot,\cdot)_W$ be an inner product on $W$ with $L:W\rightarrow W'$ the induced isomorphism. Assume that $\sigma^{-1}L$ is symplectic (or, equivalently, $\sigma^{-1}L\sigma^{-1}L=-I$ in view of $(i)$). Let $\gamma$ be a positive-definite quadratic form on $W$ with $Q:W\rightarrow W'$ the induced isomorphism. Then
    \begin{equation}\label{equ-for-square-root-of-sympl-dual}
    (L^{-1}Q^{\sigma})^{1/2}=-L^{-1}\sigma(L^{-1}Q)^{-1/2}L^{-1}\sigma.
    \end{equation}
    Furthermore, for all $X,Y\in W$, it holds that
    \begin{equation}\label{equ-for-movin-accrr-sym-form-quad-root}
    [(L^{-1}Q)^{1/2}X,Y]=[X,(L^{-1}Q^{\sigma})^{-1/2}Y]\quad \mbox{and}\quad [(L^{-1}Q^{\sigma})^{1/2}X,Y]=[X,(L^{-1}Q)^{-1/2}Y].
    \end{equation}
\end{itemize}
\end{lemma}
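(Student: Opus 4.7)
\medskip

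\noindent\textbf{Proof sketch.} For $(i)$, the plan is a direct computation from the defining identity of a symplectic map. Recall that $P\colon W\to W$ is symplectic if and only if ${}^tP\,\sigma\, P=\sigma$. Substituting $P=\sigma^{-1}A$ and using ${}^tA=A$ together with ${}^t(\sigma^{-1})=-\sigma^{-1}$ gives
\[
{}^tP\,\sigma\, P=A(-\sigma^{-1})\sigma\sigma^{-1}A=-A\sigma^{-1}A,
\]
so the symplectic condition becomes $-A\sigma^{-1}A=\sigma$; left-multiplying by $-\sigma^{-1}$ yields $\sigma^{-1}A\sigma^{-1}A=-I$, and the converse is obtained by reversing the steps.

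For $(ii)$, the strategy is to reduce everything to the properties of the operator $J:=L^{-1}\sigma\colon W\to W$, which plays the role of a compatible complex structure. First I would record three properties of $J$. Since $\sigma^{-1}L$ is symplectic, $(i)$ gives $\sigma^{-1}L\sigma^{-1}L=-I$; because $L^{-1}\sigma=(\sigma^{-1}L)^{-1}$ is then the inverse of a symplectic map (hence symplectic), and because a short manipulation of $L\sigma^{-1}L=-\sigma$ yields $L^{-1}\sigma L^{-1}\sigma=-I$, one has
\[
J^2=-I\qquad \text{and}\qquad [JX,JY]=[X,Y],\;X,Y\in W.
\]
Secondly, from $[X,Y]=\langle \sigma X,Y\rangle=(L^{-1}\sigma X,Y)_W=(JX,Y)_W$ and the antisymmetry of $[\cdot,\cdot]$, one obtains $J^*=-J$, where $J^*$ is the adjoint with respect to $(\cdot,\cdot)_W$. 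In particular $J^*J=I$, so $J$ is an orthogonal transformation of $(W,(\cdot,\cdot)_W)$.

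Next, to prove \eqref{equ-for-square-root-of-sympl-dual}, write $R:=(L^{-1}Q)^{1/2}$ and let $S:=-JR^{-1}J$ denote the right-hand side. I would verify two things. First, $S^{2}=L^{-1}Q^{\sigma}$: using $J^{2}=-I$ and $R^{-2}=Q^{-1}L$,
\[
S^{2}=JR^{-1}(J^{2})R^{-1}J=-JR^{-2}J=-JQ^{-1}LJ=-L^{-1}\sigma Q^{-1}\sigma=L^{-1}Q^{\sigma},
\]
where the last equality uses $LJ=\sigma$ and $Q^{\sigma}=-\sigma Q^{-1}\sigma$. Second, $S$ is symmetric and positive-definite with respect to $(\cdot,\cdot)_{W}$: symmetry follows from $J^{*}=-J$ and the symmetry of $R^{-1}$ via $S^{*}=-J^{*}R^{-1}J^{*}=-JR^{-1}J=S$; positivity follows from
\[
(SX,X)_{W}=(R^{-1}JX,-J^{*}X)_{W}=(R^{-1}JX,JX)_{W}>0\quad (X\neq 0),
\]
since $J$ is invertible and $R^{-1}$ is positive-definite. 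Uniqueness of the positive-definite square root in $\Sym_+(W,(\cdot,\cdot)_W)$ then forces $S=(L^{-1}Q^{\sigma})^{1/2}$.

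Finally, for \eqref{equ-for-movin-accrr-sym-form-quad-root}, the plan is to translate brackets into inner products using $[TX,Y]=(JTX,Y)_{W}$ and to exploit the three properties of $J$ together with the symmetry of $R$. From \eqref{equ-for-square-root-of-sympl-dual} one computes $S^{-1}=(L^{-1}Q^{\sigma})^{-1/2}=-JRJ$ via $J^{-1}=-J$, and then
\[
[X,S^{-1}Y]=-(JX,JRJY)_{W}=-(X,RJY)_{W}=-(RX,JY)_{W}=(JRX,Y)_{W}=[RX,Y],
\]
where the second equality uses orthogonality $J^{*}J=I$, the third uses symmetry of $R$, and the fourth uses $J^{*}=-J$. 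This gives the first identity in \eqref{equ-for-movin-accrr-sym-form-quad-root}; the second is obtained by interchanging the roles of $Q$ and $Q^{\sigma}$, which is legitimate because $(Q^{\sigma})^{\sigma}=Q$. The main (mild) obstacle is only bookkeeping of the various identifications $W\to W'$, which is handled cleanly once $J$ is introduced as the unifying object; no analytic difficulty arises.
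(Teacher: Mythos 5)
Your proof is correct and follows essentially the same approach as the paper: verify that the right-hand side of \eqref{equ-for-square-root-of-sympl-dual} is symmetric, positive-definite, and squares to $L^{-1}Q^{\sigma}$ (then invoke uniqueness of the positive square root), and prove \eqref{equ-for-movin-accrr-sym-form-quad-root} by translating symplectic brackets into $(\cdot,\cdot)_W$-inner products and moving operators via symmetry. The only cosmetic difference is that you package the bookkeeping through the compatible complex structure $J=L^{-1}\sigma$ (with $J^2=-I$, $J^*=-J$), whereas the paper works directly with $\sigma$ and $L$; the underlying identities are identical.
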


\begin{proof} To verify $(i)$, notice that
$$
[\sigma^{-1}AX,\sigma^{-1}AY]=-\langle A\sigma^{-1}AX,Y\rangle=-[\sigma^{-1}A\sigma^{-1}AX,Y],\quad X,Y\in W.
$$
The last term is equal to $[X,Y]$, for all $X,Y\in W$, if and only if $\sigma^{-1}A\sigma^{-1}A=-I$.\\
\indent We turn our attention to $(ii)$. It is straightforward to verify that the right-hand side of \eqref{equ-for-square-root-of-sympl-dual} is symmetric positive-definite operator on $W$ and that the composition of this operator with itself is $L^{-1}Q^{\sigma}$ (recall $Q^{\sigma}=-\sigma Q^{-1}\sigma$). Consequently, \eqref{equ-for-square-root-of-sympl-dual} holds true. To prove \eqref{equ-for-movin-accrr-sym-form-quad-root} it suffices to show the second equality as then the first will follow from this and the fact $(Q^{\sigma})^{\sigma}=Q$. For $X,Y\in W$, in view of $(i)$, we infer
\begin{align*}
[(L^{-1}Q^{\sigma})^{1/2}X,Y]&=-\langle (L^{-1}Q)^{-1/2}L^{-1}\sigma X, \sigma L^{-1} \sigma Y\rangle\\
&=((L^{-1}Q)^{-1/2}L^{-1}\sigma X,Y)_W= (L^{-1}\sigma X,(L^{-1}Q)^{-1/2}Y)_W\\
&=[X,(L^{-1}Q)^{-1/2}Y].
\end{align*}
\end{proof}

\begin{remark}
If $(\cdot,\cdot)_V$ is any inner product on $V$ with $(\cdot,\cdot)_{V'}$ being the dual inner product on $V'$ then
$$
((x,\xi),(y,\eta))_W:=(x,y)_V+(\xi,\eta)_{V'},\quad (x,\xi),(y,\eta)\in W,
$$
satisfies the assumption in Lemma \ref{lema-for-sym-root-quadr} $(ii)$.
\end{remark}

\begin{remark}\label{rem-for-det-of-root-of-qua}
If $(\cdot,\cdot)_W$ is such that $\sigma^{-1}L:W\rightarrow W$ has determinant $1$, then $\det (L^{-1}Q_X)=|g_X|$. To see this, let $E_j$, $j=1,\ldots,2n$, be a symplectic basis on $W$ with $E'_j$, $j=1,\ldots,2n$, being the dual basis on $W'$ and denote by $P:W\rightarrow W'$ the isomorphism that sends $E_j$ to $E'_j$, $j=1,\ldots,2n$; clearly ${}^tP=P$. Then
\begin{align*}
|g_X|&=\det((\langle E_j,Q_XE_k\rangle)_{j,k})= \det((\langle E'_j,P^{-1}Q_XE_k\rangle)_{j,k})=\det(P^{-1}Q_X)\\
&= \det(P^{-1}\sigma)\det(\sigma^{-1}L)\det(L^{-1}Q_X)=\det(L^{-1}Q_X);
\end{align*}
the very last equality follows from the fact $\det(P^{-1}\sigma)=1$ since $P^{-1}\sigma\in\Sp(W)$. Similarly, in this case we also have $|g^{\sigma}_X|=\det(L^{-1}Q^{\sigma}_X)$. In particular, $\det (L^{-1}Q_X)=|g_X|$ and $\det(L^{-1}Q^{\sigma}_X)=|g^{\sigma}_X|$ when $\sigma^{-1}L$ is symplectic, i.e. when $\sigma^{-1}L\sigma^{-1}L=-I$ (cf. Lemma \ref{lema-for-sym-root-quadr} $(i)$).
\end{remark}

Given an inner product $(\cdot,\cdot)_W$ on $W$ with $L:W\rightarrow W'$ the induced isomorphism, for each $X\in W$, we denote by $\Psi^{g,L}_X$ the topological isomorphism
\begin{align*}
\Psi^{g,L}_X:\SSS(W)\rightarrow \SSS(W),\quad (\Psi^{g,L}_X\varphi)(Y)=\varphi((L^{-1}Q_X)^{-1/2}Y),\; Y\in W,
\end{align*}
and we extend it by duality to the topological isomorphism
\begin{align}\label{change-of-var-for-dualit}
\Psi^{g,L}_X:\SSS'(W)\rightarrow \SSS'(W),\quad \langle\Psi^{g,L}_Xf,\varphi\rangle=|\det (L^{-1}Q_X)|^{1/2} \langle f,\varphi\circ(L^{-1}Q_X)^{1/2}\rangle.
\end{align}
As standard, we denote by $\pi(x,\xi)$, $(x,\xi)\in W$, the time-frequency shift $\pi(x,\xi)f=e^{2\pi i \langle \xi,\cdot\rangle} f(\cdot-x)$, $f\in\SSS'(V)$. For $\chi_1,\chi_2\in\SSS(V)$, $W(\chi_1,\chi_2)$ stands for the Wigner function
$$
W(\chi_1,\chi_2)(x,\xi)=\int_V e^{-2\pi i \langle \xi,y\rangle}\chi_1(x+y/2)\overline{\chi_2(x-y/2)}dy,\quad (x,\xi)\in W;
$$
recall that $W(\chi_1,\chi_2)\in \SSS(W)$.\\
\indent We are ready to state and prove the main result of the article.

\begin{theorem}\label{main-theorem-dia}
Let $a\in \SSS'(W)$ and $M$ be a $g$-admissible weight with slow variation constant $r$. Let $(\cdot,\cdot)_W$ be an inner product on $W$ whose induced isomorphism $L:W\rightarrow W'$ is such that $\sigma^{-1}L$ is a symplectic map and let $\Psi^{g,L}_X$, $X\in W$, be the isomorphisms defined in \eqref{change-of-var-for-dualit}. Let $\{\theta_X\,|\, X\in W\}\subseteq \mathcal{O}_{\mathcal{M}}(W)$ and $\chi\in\SSS(V)$. Consider the following conditions:
\begin{itemize}
\item[$(i)$] $a\in S(M,g)$;
\item[$(ii)$] for each $N\in\NN$, the function
    \begin{multline*}
    (X,\Xi)\mapsto M((X+\Xi)/2)^{-1}(1+g_{\frac{X+\Xi}{2}}(X-\Xi))^N\\
    \cdot\left\langle \left(\Psi^{g,L}_{\frac{X+\Xi}{2}}(a\theta_{\frac{X+\Xi}{2}})\right)^w \pi\left((L^{-1}Q_{\frac{X+\Xi}{2}})^{1/2}X\right)\chi, \overline{\pi\left((L^{-1}Q_{\frac{X+\Xi}{2}})^{1/2}\Xi\right)\chi}\right\rangle
    \end{multline*}
    belongs to $L^{\infty}(W\times W)$.
\end{itemize}
Set
\begin{equation}\label{def-for-var-for-non-deg-ele-confmainthe}
\varphi_X(Y):=\overline{\theta_X(Y)} W(\chi,\chi)((L^{-1}Q_X)^{1/2}(Y-X)),\quad X,Y\in W.
\end{equation}
If $\boldsymbol{\varphi}:W\mapsto \SSS(W)$, $\boldsymbol{\varphi}(X)=\varphi_X$, belongs to $\Conf_g(W;\min\{r_0,r\})$, then $(i)$ implies $(ii)$. If, furthermore, $\boldsymbol{\varphi}$ is non-degenerate, then $(ii)$ implies $(i)$.
\end{theorem}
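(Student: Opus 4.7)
Write $X_0 := (X+\Xi)/2$ and, locally, $T := (L^{-1}Q_{X_0})^{1/2}$, $T^{\sigma} := (L^{-1}Q_{X_0}^{\sigma})^{1/2}$. The strategy is to convert the matrix-element bound in $(ii)$ into a pointwise decay bound on $\mathcal{V}_{\boldsymbol{\varphi}}a$, and then to invoke Proposition \ref{rep-res-hor} with $p=\infty$, which gives $S(M,g)=\bigcap_{N\geq 0}\widetilde{\MM}^{\infty,\infty}_{u_{\infty,N}/M}$.

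The central step is the identity
\[
\left\langle\left(\Psi^{g,L}_{X_0}(a\theta_{X_0})\right)^{w}\pi(TX)\chi,\overline{\pi(T\Xi)\chi}\right\rangle=|g_{X_0}|^{1/2}e^{i\alpha(X,\Xi)}\mathcal{V}_{\boldsymbol{\varphi}}a(X_0,\Xi'),
\]
where $\alpha(X,\Xi)\in\RR$ and $\Xi':=-(T^{\sigma})^{-1}T(X-\Xi)$. To establish it, start from the duality $\langle b^{w}f,\overline{h}\rangle=\langle b,W(f,h)\rangle$ (valid for $b\in\SSS'(W)$, $f,h\in\SSS(V)$) and use the standard covariance of the Wigner transform,
\[
W(\pi(Y_1)\chi,\pi(Y_2)\chi)(z)=e^{i\Phi(z)}W(\chi,\chi)(z-(Y_1+Y_2)/2),
\]
with $\Phi$ affine in $z$. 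Setting $Y_1=TX$ and $Y_2=T\Xi$ one has $(Y_1+Y_2)/2=TX_0$; the substitution $z=TU$ brings in the Jacobian $|g_{X_0}|^{1/2}$ (Remark \ref{rem-for-det-of-root-of-qua}). The defining relation \eqref{def-for-var-for-non-deg-ele-confmainthe} and the reality of $W(\chi,\chi)$ then collapse the integrand into $a(U)\overline{\varphi_{X_0}(U)}e^{i\Phi(TU)}$. Finally, Lemma \ref{lema-for-sym-root-quadr}$(ii)$, which gives $[TU,Y]=[U,(T^{\sigma})^{-1}Y]$, rewrites the $U$-dependent part of $\Phi(TU)$ as $-2\pi[\Xi',U]$ up to a $U$-independent constant, yielding the stated identity.

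From this, $g^{\sigma}_{X_0}(\Xi')=g_{X_0}(X-\Xi)$ (compute both sides as squared $(\cdot,\cdot)_{W}$-norms using that $T^{\sigma}$ and $T$ are the symmetric positive-definite square roots of $L^{-1}Q^{\sigma}_{X_0}$ and $L^{-1}Q_{X_0}$, together with $T^{\sigma}\Xi'=-T(X-\Xi)$). Moreover $(X,\Xi)\mapsto(X_0,\Xi')$ is a bijection of $W\times W$, since for fixed $X_0$ the map $X-\Xi\mapsto\Xi'$ is a linear isomorphism. Consequently $(ii)$ is equivalent to the statement: for every $N\in\NN$ there is $C_{N}>0$ with
\[
|\mathcal{V}_{\boldsymbol{\varphi}}a(X_0,\Xi')|\leq C_{N}\,M(X_0)|g_{X_0}|^{-1/2}(1+g^{\sigma}_{X_0}(\Xi'))^{-N},\qquad(X_0,\Xi')\in W\times W,
\]
i.e.\ $\mathcal{V}_{\boldsymbol{\varphi}}a\in L^{\infty}_{u_{\infty,N}/M}(W\times W)$ for every $N\geq 0$.

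Proposition \ref{rep-res-hor} (applied with $p=\infty$) identifies $\bigcap_{N}\widetilde{\MM}^{\infty,\infty}_{u_{\infty,N}/M}$ with $S(M,g)$. For $(i)\Rightarrow(ii)$, Theorem \ref{mod-spa-donot-depend-on-partuni} provides continuity of $\mathcal{V}_{\boldsymbol{\varphi}}:\widetilde{\MM}^{\infty,\infty}_{u_{\infty,N}/M}\to\widetilde{L}^{\infty,\infty}_{u_{\infty,N}/M}$ for any $\boldsymbol{\varphi}\in\Conf_g(W;\min\{r_0,r\})$ (no non-degeneracy required), which delivers the desired bound. For $(ii)\Rightarrow(i)$, the non-degeneracy of $\boldsymbol{\varphi}$ ensures via Theorem \ref{mod-spa-donot-depend-on-partuni} that the bound above forces $a\in\widetilde{\MM}^{\infty,\infty}_{u_{\infty,N}/M}$ for every $N$, hence $a\in S(M,g)$. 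The principal hurdle is the Wigner/phase manipulation of the central step, where the non-symplectic operator $T$ must be threaded through a symplectic exponential; Lemma \ref{lema-for-sym-root-quadr}$(ii)$ is precisely what makes the resulting exponential factor match the one appearing in $\mathcal{V}_{\boldsymbol{\varphi}}$ and fixes the value of $\Xi'$.
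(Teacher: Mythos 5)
Your proposal follows essentially the same route as the paper: the same pivotal identity rewriting the matrix element as $|g_{X_0}|^{1/2}\,e^{i\alpha}\,\VV_{\boldsymbol{\varphi}}a(X_0,\Xi')$ via the Wigner-function formula for $\langle a^{w}\pi(X)\chi,\overline{\pi(\Xi)\chi}\rangle$ and Lemma \ref{lema-for-sym-root-quadr}$(ii)$, the same observation $g^{\sigma}_{X_0}(\Xi')=g_{X_0}(X-\Xi)$, and the same conclusion via Proposition \ref{rep-res-hor} (with $p=\infty$) and Theorem \ref{mod-spa-donot-depend-on-partuni}. One technical point you pass over: since $(ii)$ is an $L^{\infty}$ (hence a.e.) statement, you must verify both that the function in $(ii)$ is measurable and that your bijection $(X,\Xi)\mapsto(X_0,\Xi')$ preserves nullsets in both directions; this is not automatic because, after factoring out the linear change $(X,\Xi)\mapsto(X_0,X-\Xi)$, the remaining map $(Z_1,Z_2)\mapsto(Z_1,(L^{-1}Q^{\sigma}_{Z_1})^{-1/2}(L^{-1}Q_{Z_1})^{1/2}Z_2)$ is only a fibrewise linear isomorphism with $Z_1$-dependent matrix. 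The paper establishes measurability through the strong measurability of $X\mapsto\varphi_X$ and the smoothness of $A\mapsto A^{1/2}$ on $\Sym_+$, and handles the nullset preservation by a short Fubini argument using $|g_{Z_1}||g^{\sigma}_{Z_1}|=1$; these steps should be included to make the argument airtight.
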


\begin{remark}
Since $\theta_X\in \mathcal{O}_{\mathcal{M}}(W)$, $X\in W$, we have $a\theta_X\in \SSS'(W)$, for all $a\in \SSS'(W)$ and $X\in W$. Thus, the right hand side of $(ii)$ is well-defined for each $X,\Xi\in W$.
\end{remark}

\begin{remark}
In Lemma \ref{fam-sat-con-maithforvar} below we give a class of families $\{\theta_X\,|\, X\in W\}$ which satisfy all of the assumptions in Theorem \ref{main-theorem-dia} for all $\chi\in\SSS(V)$ such that $W(\chi,\chi)(0)\neq 0$ (see also Remark \ref{rem-for-exa-famisatisallofassummainthe}).
\end{remark}

\begin{proof} Set $\tilde{Q}_X:=L^{-1}Q_X$ and $\tilde{Q}^{\sigma}_X=L^{-1}Q^{\sigma}_X$, $X\in W$. Assume that $\boldsymbol{\varphi}\in\Conf_g(W;\min\{r_0,r\})$. First we show that for each $a\in \SSS'(W)$, the function
\begin{equation}\label{fun-for-measur}
W\times W\rightarrow \CC,\quad (X,\Xi)\mapsto \left\langle\left(\Psi^{g,L}_{\frac{X+\Xi}{2}}(a\theta_{\frac{X+\Xi}{2}})\right)^w \pi\left(\tilde{Q}^{1/2}_{\frac{X+\Xi}{2}}X\right)\chi, \overline{\pi\left(\tilde{Q}^{1/2}_{\frac{X+\Xi}{2}}\Xi\right)\chi}\right\rangle,
\end{equation}
is measurable. Assume first that $a\in\SSS(W)$. Then \cite[Lemma 3.1]{Gr2} (especially its proof; cf. \cite[Equation (4.4)]{gro-rz}) gives
\begin{equation}\label{eq2-ss11}
\langle a^w\pi(X)\chi,\overline{\pi(\Xi)\chi}\rangle=e^{if(X,\Xi)}\int_W e^{-2\pi i[Y,X-\Xi]}a(Y) W(\chi,\chi)\left(Y-\frac{X+\Xi}{2}\right) dY,
\end{equation}
where $f$ is a real-valued smooth function on $W\times W$ which does not depend on $a$ and $\chi$. In view of Remark \ref{rem-for-det-of-root-of-qua} and Lemma \ref{lema-for-sym-root-quadr} $(ii)$, we infer ($W(\chi,\chi)$ is real-valued)
\begin{align}
&\left\langle\left(\Psi^{g,L}_{\frac{X+\Xi}{2}}(a\theta_{\frac{X+\Xi}{2}})\right)^w \pi\left(\tilde{Q}^{1/2}_{\frac{X+\Xi}{2}}X\right)\chi, \overline{\pi\left(\tilde{Q}^{1/2}_{\frac{X+\Xi}{2}}\Xi\right)\chi}\right\rangle\nonumber\\
&=e^{if\left(\tilde{Q}^{1/2}_{\frac{X+\Xi}{2}}X,\tilde{Q}^{1/2}_{\frac{X+\Xi}{2}}\Xi\right)} |g_{\frac{X+\Xi}{2}}|^{1/2}\int_W e^{-2\pi i\left[Y\, , \,\tilde{Q}^{\sigma\, -1/2}_{\frac{X+\Xi}{2}}\tilde{Q}^{1/2}_{\frac{X+\Xi}{2}}(X-\Xi)\right]}a(Y) \overline{\varphi_{\frac{X+\Xi}{2}}(Y)}dY\nonumber\\
&=e^{if\left(\tilde{Q}^{1/2}_{\frac{X+\Xi}{2}}X,\tilde{Q}^{1/2}_{\frac{X+\Xi}{2}}\Xi\right)} |g_{\frac{X+\Xi}{2}}|^{1/2} \left\langle a,e^{-2\pi i\left[\,\cdot\, ,\,\tilde{Q}^{\sigma\, -1/2}_{\frac{X+\Xi}{2}}\tilde{Q}^{1/2}_{\frac{X+\Xi}{2}}(X-\Xi)\right]} \overline{\varphi_{\frac{X+\Xi}{2}}}\right\rangle.\label{equ-for-the-mapping-for-main-res}
\end{align}
Since $X\mapsto \tilde{Q}_X^{1/2}$ and $X\mapsto \tilde{Q}^{\sigma\, -1/2}_X$ are measurable maps from $W$ into $\Sym_+(W,(\cdot,\cdot)_W)$ (recall, the square root is smooth on $\Sym_+(W,(\cdot,\cdot)_W)$), this integral representation together with the fact that $W\rightarrow \SSS(W)$, $X\mapsto \varphi_X$, is strongly measurable implies that the function \eqref{fun-for-measur} is measurable when $a\in \SSS(W)$. Now, the case when $a\in \SSS'(W)$ follows from the fact that $\SSS(W)$ is sequentially dense in $\SSS'(W)$. In view of \eqref{equ-for-the-mapping-for-main-res}, this also verifies the following key identity:
\begin{multline}\label{ident-sympstft-actpsid}
\left|\left\langle\left(\Psi^{g,L}_{\frac{X+\Xi}{2}}(a\theta_{\frac{X+\Xi}{2}})\right)^w \pi\left(\tilde{Q}^{1/2}_{\frac{X+\Xi}{2}}X\right)\chi, \overline{\pi\left(\tilde{Q}^{1/2}_{\frac{X+\Xi}{2}}\Xi\right)\chi}\right\rangle\right|\\
=|g_{\frac{X+\Xi}{2}}|^{1/2}\left|\VV_{\boldsymbol{\varphi}}a\left(\frac{X+\Xi}{2}, \tilde{Q}^{\sigma\, -1/2}_{\frac{X+\Xi}{2}}\tilde{Q}^{1/2}_{\frac{X+\Xi}{2}}(\Xi-X)\right)\right|,\;\;  X,\Xi\in W,
\end{multline}
for all $a\in \SSS'(W)$ (\eqref{ident-sympstft-actpsid} transfers the problem from $\Psi$DOs to the theory of the generalised modulation spaces which we developed in the previous sections). Notice that $g^{\sigma}_X(\tilde{Q}^{\sigma\, -1/2}_X\tilde{Q}^{1/2}_X\Xi)=g_X(\Xi)$, $X,\Xi\in W$. Consequently, the condition $(ii)$ is equivalent to the following: for every $N\in\NN$ there exist $C_N>0$ and a nullset $K_N\subseteq W\times W$ such that
\begin{multline}\label{condi-equiv-toc-in-theore}
\left|\VV_{\boldsymbol{\varphi}}a\left(\frac{X+\Xi}{2}, \tilde{Q}^{\sigma\, -1/2}_{\frac{X+\Xi}{2}}\tilde{Q}^{1/2}_{\frac{X+\Xi}{2}}(\Xi-X)\right)\right|\\
\leq \frac{C_NM((X+\Xi)/2)}{|g_{\frac{X+\Xi}{2}}|^{1/2} \left(1+g^{\sigma}_{\frac{X+\Xi}{2}}\left(\tilde{Q}^{\sigma\, -1/2}_{\frac{X+\Xi}{2}} \tilde{Q}^{1/2}_{\frac{X+\Xi}{2}}(\Xi-X)\right)\right)^N},\;\; (X,\Xi)\in (W\times W)\backslash K_N.
\end{multline}
Consider the bijection $A:W\times W\rightarrow W\times W$, $A(Z_1,Z_2)=(Z_1,\tilde{Q}^{\sigma\, -1/2}_{Z_1}\tilde{Q}^{1/2}_{Z_1}Z_2)$ with inverse $A^{-1}(Z_1,Z_2)=(Z_1,\tilde{Q}^{-1/2}_{Z_1}\tilde{Q}^{\sigma\, 1/2}_{Z_1}Z_2)$. Both $A$ and $A^{-1}$ are measurable maps. We claim that the images of every nullset $\tilde{K}\subseteq W\times W$ under $A$ and $A^{-1}$ are again nullsets. To see this, pick a Borel nullset $K\supseteq \tilde{K}$ and denote by $\kappa_K$ the characteristic function of $K$. Since $\kappa_K\circ A^{-1}$ is the characteristic function of $A(K)$, it follows that $\kappa_K\circ A^{-1}$ is measurable. Remark \ref{rem-for-det-of-root-of-qua}, Fubini's theorem and the fact $|g_X||g^{\sigma}_X|=1$ give
\begin{align*}
(\lambda\times\lambda)(A(K))&=\int_W\left(\int_W \kappa_K(A^{-1}(Z_1,Z_2)) dZ_2\right)dZ_1\\
&= \int_W|g_{Z_1}|\left(\int_W \kappa_K(Z_1,Z_2) dZ_2\right)dZ_1=\int_K |g_{Z_1}|dZ_1dZ_2=0.
\end{align*}
Consequently, $A(\tilde{K})$ is a nullset; the proof that $A^{-1}(\tilde{K})$ is a nullset is analogous. Thus, in view of \eqref{condi-equiv-toc-in-theore}, the condition $(ii)$ is equivalent to
$$
(Z_1,Z_2)\mapsto M(Z_1)^{-1}|g_{Z_1}|^{1/2}(1+g^{\sigma}_{Z_1}(Z_2))^N |\VV_{\boldsymbol{\varphi}}a(Z_1,Z_2)|\quad \mbox{belongs to}\,\, L^{\infty}(W\times W),
$$
for all $N\in\NN$. Now, both claims in the theorem follow from Theorem \ref{mod-spa-donot-depend-on-partuni} and Proposition \ref{rep-res-hor}.
\end{proof}

\begin{remark}\label{rem-for-con-smooth-metr-fun}
If both $g$ and $\boldsymbol{\varphi}$ are of class $\mathcal{C}^k$, $0\leq k\leq\infty$, then \eqref{equ-for-the-mapping-for-main-res} implies that the function \eqref{fun-for-measur} is also of class $\mathcal{C}^k$, $0\leq k\leq\infty$, for each $a\in\SSS'(W)$ (see the comments before Remark \ref{rem-for-euc-metric-stand}).
\end{remark}

\begin{remark}\label{sim-con-iim-for-evewo}
Introducing the change of variables $(X,\Xi)\mapsto (X+\Xi,X-\Xi)$, one verifies that the condition $(ii)$ is equivalent to the following:
\begin{itemize}
\item[$(ii)'$] for each $N\in\NN$, the function
\begin{multline*}
(X,\Xi)\mapsto M(X)^{-1}(1+g_X(\Xi))^N\\
    \cdot\left\langle \left(\Psi^{g,L}_X(a\theta_X)\right)^w \pi\left((L^{-1}Q_X)^{1/2}(X+\Xi)\right)\chi, \overline{\pi\left((L^{-1}Q_X)^{1/2}(X-\Xi)\right)\chi}\right\rangle
    \end{multline*}
    belongs to $L^{\infty}(W\times W)$.
\end{itemize}
\end{remark}

Employing analogous technique as in the proof of Theorem \ref{main-theorem-dia}, we can characterise the symbols in $\widetilde{\MM}^{p,p}_{\eta}$ in similar fashion.

\begin{theorem}\label{charc-of-symplec-mod-space-with-growth}
Let $L$ and $\Psi^{g,L}_X$, $X\in W$, be as in Theorem \ref{main-theorem-dia}. Let $\eta$ be a uniformly admissible weight with respect to $g$ with slow variation constant $r$ and let $1\leq p\leq \infty$. Let $\{\theta_X\,|\, X\in W\}\subseteq \mathcal{O}_{\mathcal{M}}(W)$ and $\chi\in\SSS(V)$. Consider the following conditions for $a\in\SSS'(W)$:
\begin{itemize}
\item[$(i)$] $a\in \widetilde{\MM}^{p,p}_{\eta}$;
\item[$(ii)$] the function
    \begin{multline*}
    (X,\Xi)\mapsto |g_{\frac{X+\Xi}{2}}|^{-\frac{1}{2}+\frac{3}{2p}}\, \eta\left(\frac{X+\Xi}{2}, (L^{-1}Q^{\sigma}_{\frac{X+\Xi}{2}})^{-1/2}(L^{-1}Q_{\frac{X+\Xi}{2}})^{1/2} (\Xi-X)\right)\\
    \cdot\left\langle \left(\Psi^{g,L}_{\frac{X+\Xi}{2}}(a\theta_{\frac{X+\Xi}{2}})\right)^w \pi\left((L^{-1}Q_{\frac{X+\Xi}{2}})^{1/2}X\right)\chi, \overline{\pi\left((L^{-1}Q_{\frac{X+\Xi}{2}})^{1/2}\Xi\right)\chi}\right\rangle
    \end{multline*}
    belongs to $L^p(W\times W)$.
\end{itemize}
If $\boldsymbol{\varphi}:W\mapsto \SSS(W)$, $\boldsymbol{\varphi}(X)=\varphi_X$, with $\varphi_X$ given by \eqref{def-for-var-for-non-deg-ele-confmainthe}, belongs to $\Conf_g(W;\min\{r_0,r\}),$ then $(i)$ implies $(ii)$. If, furthermore, $\boldsymbol{\varphi}$ is non-degenerate, then $(ii)$ implies $(i)$.
\end{theorem}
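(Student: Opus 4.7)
The plan is to follow the structure of the proof of Theorem \ref{main-theorem-dia} verbatim, exploiting the key identity \eqref{ident-sympstft-actpsid} to translate condition $(ii)$ into a statement about $\VV_{\boldsymbol{\varphi}}a$, and then matching the resulting weight against the definition of $\widetilde{\MM}^{p,p}_{\eta}$. Since $\widetilde{L}^{p,p}_{\eta}(W\times W,dv_gd\lambda)=L^p_{\eta}(W\times W,dv_gd\lambda)$, one has $a\in\widetilde{\MM}^{p,p}_{\eta}$ (for $\boldsymbol{\varphi}$ non-degenerate) if and only if the function $(Z_1,Z_2)\mapsto \eta(Z_1,Z_2)|\VV_{\boldsymbol{\varphi}}a(Z_1,Z_2)||g_{Z_1}|^{1/(2p)}$ belongs to $L^p(W\times W)$ (with the usual modification for $p=\infty$), by Theorem \ref{mod-spa-donot-depend-on-partuni}.

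Under the standing assumption $\boldsymbol{\varphi}\in\Conf_g(W;\min\{r_0,r\})$, the measurability of the expression in $(ii)$ and the identity
\begin{multline*}
\left|\left\langle\left(\Psi^{g,L}_{\frac{X+\Xi}{2}}(a\theta_{\frac{X+\Xi}{2}})\right)^w \pi\left(\tilde{Q}^{1/2}_{\frac{X+\Xi}{2}}X\right)\chi, \overline{\pi\left(\tilde{Q}^{1/2}_{\frac{X+\Xi}{2}}\Xi\right)\chi}\right\rangle\right|\\
=|g_{\frac{X+\Xi}{2}}|^{1/2}\left|\VV_{\boldsymbol{\varphi}}a\left(\tfrac{X+\Xi}{2}, \tilde{Q}^{\sigma\,-1/2}_{\frac{X+\Xi}{2}}\tilde{Q}^{1/2}_{\frac{X+\Xi}{2}}(\Xi-X)\right)\right|
\end{multline*}
are both already established in the proof of Theorem \ref{main-theorem-dia}, with $\tilde Q_X:=L^{-1}Q_X$, $\tilde Q^{\sigma}_X:=L^{-1}Q^{\sigma}_X$. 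Multiplying by $|g_{\frac{X+\Xi}{2}}|^{-1/2+3/(2p)}\eta(\cdots)$ (the factor from $(ii)$), the modulus of the function in $(ii)$ becomes
$$
|g_{\frac{X+\Xi}{2}}|^{3/(2p)}\,\eta\!\left(\tfrac{X+\Xi}{2}, \tilde{Q}^{\sigma\,-1/2}_{\frac{X+\Xi}{2}}\tilde{Q}^{1/2}_{\frac{X+\Xi}{2}}(\Xi-X)\right) \left|\VV_{\boldsymbol{\varphi}}a\!\left(\tfrac{X+\Xi}{2}, \tilde{Q}^{\sigma\,-1/2}_{\frac{X+\Xi}{2}}\tilde{Q}^{1/2}_{\frac{X+\Xi}{2}}(\Xi-X)\right)\right|.
$$

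Next, I would apply the linear change of variables $(X,\Xi)\mapsto(\tilde X,\tilde\Xi):=((X+\Xi)/2,\Xi-X)$, which is measure-preserving (Jacobian $1$), followed by the fibrewise nonlinear substitution $Z_2:=\tilde{Q}^{\sigma\,-1/2}_{\tilde X}\tilde{Q}^{1/2}_{\tilde X}\tilde\Xi$ in the second coordinate with $\tilde X=Z_1$ fixed. By Remark \ref{rem-for-det-of-root-of-qua} and $|g_X||g^{\sigma}_X|=1$, the Jacobian of this substitution equals $|g_{Z_1}|^{1/2}|g^{\sigma}_{Z_1}|^{-1/2}=|g_{Z_1}|$; that this bijection sends nullsets to nullsets is already verified inside the proof of Theorem \ref{main-theorem-dia}. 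The $p$-th power of the above expression therefore integrates to
\begin{equation*}
\int_{W\times W}|g_{Z_1}|^{3/2}\eta(Z_1,Z_2)^p|\VV_{\boldsymbol{\varphi}}a(Z_1,Z_2)|^p\,|g_{Z_1}|^{-1}\,dZ_1dZ_2 =\int_{W\times W}\eta(Z_1,Z_2)^p|\VV_{\boldsymbol{\varphi}}a(Z_1,Z_2)|^p\,dv_g(Z_1)d\lambda(Z_2),
\end{equation*}
which is precisely $\|a\|^p_{\widetilde{\MM}^{p,p}_{\eta}}$.

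For $p=\infty$ the factor $|g|^{-1/2+3/(2p)}$ degenerates to $|g|^{-1/2}$, which cancels the $|g|^{1/2}$ from the identity, so that the function in $(ii)$ reduces, after the same change of variables, to $\eta(Z_1,Z_2)|\VV_{\boldsymbol{\varphi}}a(Z_1,Z_2)|$, and $L^\infty$-boundedness (up to nullsets) is preserved by the bijection. Combining the direction $(i)\Rightarrow(ii)$ (which needs only $\boldsymbol{\varphi}\in\Conf_g(W;\min\{r_0,r\})$, via Theorem \ref{mod-spa-donot-depend-on-partuni} giving $\VV_{\boldsymbol{\varphi}}\colon\widetilde{\MM}^{p,p}_{\eta}\to L^p_\eta$) with the reverse direction (which requires non-degeneracy to invert $\VV_{\boldsymbol{\varphi}}$ back to $a$ using $\VV^*_{\boldsymbol{\varphi}/I_{|\boldsymbol{\varphi}|^2}}\VV_{\boldsymbol{\varphi}}=\operatorname{Id}$) concludes the proof. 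No real obstacle is expected: the argument is essentially an exponent-bookkeeping variant of Theorem \ref{main-theorem-dia}, the only delicate point being to keep track of the three competing powers of $|g|$ (coming from the $(ii)$-weight $|g|^{-1/2+3/(2p)}$, from the identity \eqref{ident-sympstft-actpsid}, and from the Jacobian), so that they combine to $|g|^{1/2}=dv_g/d\lambda$ in the final integral.
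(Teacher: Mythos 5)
Your argument is correct and follows the same route as the paper: translate condition $(ii)$ via the identity \eqref{ident-sympstft-actpsid}, perform the measure-preserving change $(X,\Xi)\mapsto((X+\Xi)/2,\Xi-X)$ followed by the fibrewise substitution $Z_2=\tilde{Q}^{\sigma\,-1/2}_{Z_1}\tilde{Q}^{1/2}_{Z_1}\tilde\Xi$ whose Jacobian $|g_{Z_1}|$ combines with the weight $|g|^{3/(2p)}$ to produce $dv_g$, and conclude via Theorem \ref{mod-spa-donot-depend-on-partuni}. One small point you gloss over: the proof of Theorem \ref{main-theorem-dia} establishes measurability of the $\Psi$DO bracket only, and the measurability of the extra $\eta\circ A$ factor in $(ii)$ still needs the nullset-preservation property of $A^{-1}$ that you mention but do not explicitly tie to this purpose.
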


\begin{proof} We claim that for each $a\in\SSS'(W)$, the function in $(ii)$ is measurable. Since \eqref{fun-for-measur} is measurable, we only need to prove that the part with the $\eta$ function is measurable; notice that it equals $\eta\circ A((X+\Xi)/2,\Xi-X)$ with $A$ being the measurable bijection we defined in the proof of Theorem \ref{main-theorem-dia}. To verify the measurability of the latter, it suffices to show that $(X,\Xi)\mapsto \eta\circ A(X,\Xi)$ is measurable. This follows from the fact that the image under $A^{-1}$ of every nullset is again a nullset (see the proof of Theorem \ref{main-theorem-dia}).\\
\indent When $p=\infty$, the proof of the theorem is the same as the proof of Theorem \ref{main-theorem-dia}. Assume that $1\leq p<\infty$. The identity \eqref{ident-sympstft-actpsid} together with a change of variables (cf. Remark \ref{rem-for-det-of-root-of-qua}) verifies that $(ii)$ is equivalent to $\VV_{\boldsymbol{\varphi}}a\in L^p_{\eta}(W\times W,dv_gd\lambda)$. Now, both assertions in the theorem follow from Theorem \ref{mod-spa-donot-depend-on-partuni}.
\end{proof}

One can simplify the condition $(ii)$ by introducing the change of variables from Remark \ref{sim-con-iim-for-evewo}.

\subsection{Examples and consequences of Theorem \ref{main-theorem-dia}}\label{ex111}

Our first goal is to give examples of $\chi\in\SSS(V)$ and $\{\theta_X\,|\, X\in W\}\subseteq \mathcal{O}_{\mathcal{M}}(W)$ which satisfy the assumptions of Theorem \ref{main-theorem-dia}. We start with the following technical result.

\begin{lemma}\label{lemma-for-wig-gen-dec}
Let $(\cdot,\cdot)_W$ be an inner product on $W$ with $L:W\rightarrow W'$ the induced isomorphism. Let $\psi\in\SSS(W)$ and, for each $X\in W$, define $\tilde{\psi}_X:=\psi((L^{-1}Q_X)^{1/2}(\cdot -X))$. Then $\tilde{\psi}_X\in\SSS(W)$, $X\in W$, and the map $W\rightarrow \SSS(W)$, $X\mapsto \tilde{\psi}_X$, is strongly measurable. Furthermore,
\begin{equation}\label{est-for-con-par-als}
\sup_{l\leq k}\sup_{\substack{X,Y\in W\\ T_1,\ldots, T_l\in W\backslash \{0\}}}\frac{|\partial_{T_1;Y}\ldots \partial_{T_l;Y}\tilde{\psi}_X(Y)|(1+g_X(X-Y))^N}{\prod_{j=1}^l g_X(T_j)^{1/2}}<\infty, \;\; k,N\in\NN.
\end{equation}
In particular, if $g$ is symplectic, then $W\rightarrow \SSS(W)$, $X\mapsto \tilde{\psi}_X$, belongs to $\Conf_g(W;r)$, for all $r\in(0,r_0]$. If, in addition, $\psi(0)\neq 0,$ then $X\mapsto \tilde{\psi}_X$ is non-degenerate.
\end{lemma}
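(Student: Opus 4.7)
That $\tilde{\psi}_X\in\SSS(W)$ is immediate since $Y\mapsto(L^{-1}Q_X)^{1/2}(Y-X)$ is an affine automorphism of $W$. Strong measurability of $X\mapsto\tilde{\psi}_X$ follows from Lemma \ref{lemma-regularity-sta} applied with $A(X):=(L^{-1}Q_X)^{1/2}$ and $f(X):=-(L^{-1}Q_X)^{1/2}X$; here $A$ is measurable into $\operatorname{GL}(W)$ because $X\mapsto L^{-1}Q_X$ is measurable into the open cone $\operatorname{Sym}_+(W,(\cdot,\cdot)_W)$ on which the square root is smooth.

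The estimate \eqref{est-for-con-par-als} rests on the key identity
$$((L^{-1}Q_X)^{1/2}T,(L^{-1}Q_X)^{1/2}T)_W=((L^{-1}Q_X)T,T)_W=\langle Q_XT,T\rangle=g_X(T),\quad T\in W,$$
which follows from the symmetry of $(L^{-1}Q_X)^{1/2}$ with respect to $(\cdot,\cdot)_W$ and from $\langle LU,V\rangle=(U,V)_W$. Setting $Z:=(L^{-1}Q_X)^{1/2}(Y-X)$ and $S_j:=(L^{-1}Q_X)^{1/2}T_j$, the chain rule gives
$$\partial_{T_1;Y}\cdots\partial_{T_l;Y}\tilde{\psi}_X(Y)=\psi^{(l)}(Z;S_1,\ldots,S_l),$$
with $|Z|_W^2=g_X(X-Y)$ and $|S_j|_W^2=g_X(T_j)$, where $|\cdot|_W$ is the norm induced by $(\cdot,\cdot)_W$. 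Using that $\psi\in\SSS(W)$ supplies, for every $k,N\in\NN$, a constant $C_{k,N}>0$ with $|\psi^{(l)}(Z;S_1,\ldots,S_l)|\leq C_{k,N}(1+|Z|_W)^{-2N}\prod_{j=1}^l|S_j|_W$ for all $l\leq k$, combined with the elementary inequality $(1+t^{1/2})^{2N}\geq(1+t)^N$, one obtains \eqref{est-for-con-par-als}.

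If $g$ is symplectic then $g^{\sigma}_X=g_X$, and since $X\in U_{X,r}$ we have $g^{\sigma}_X(Y-U_{X,r})\leq g^{\sigma}_X(Y-X)=g_X(X-Y)$. Feeding \eqref{est-for-con-par-als} with $N\geq k/2$ into the definition of $\|\cdot\|^{(k)}_{g_X,U_{X,r}}$ yields $\sup_{X\in W}\|\tilde{\psi}_X\|^{(k)}_{g_X,U_{X,r}}<\infty$ for every $k\in\NN$, so $X\mapsto\tilde{\psi}_X$ belongs to $\Conf_g(W;r)$ for every $r\in(0,r_0]$.

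Finally, suppose $\psi(0)\neq 0$. By continuity of $\psi$ there is $\delta\in(0,r_0]$ with $|\psi(Z)|\geq|\psi(0)|/2$ whenever $|Z|_W\leq\delta$, and therefore $|\tilde{\psi}_X(Y)|\geq|\psi(0)|/2$ whenever $g_X(X-Y)\leq\delta^2$. The slow variation of $g$ applied at $Y$ yields the inclusion
$$\{X\in W:g_Y(X-Y)\leq\delta^2/C_0\}\subseteq\{X\in W:g_X(X-Y)\leq\delta^2\},$$
since $g_Y(X-Y)\leq r_0^2$ forces $g_X(X-Y)\leq C_0g_Y(X-Y)$. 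Because $g$ is symplectic, Remark \ref{equ-for-sym-metr-meas-lebesmes} and \eqref{ineq-for-metric-p-4} give $dv_g=d\lambda$ and $|g_Y|\equiv 1$, whence the $d\lambda$-volume of the $g_Y$-ellipsoid on the left equals the constant $\omega_{2n}(\delta/\sqrt{C_0})^{2n}$, independent of $Y$. We conclude
$$\inf_{Y\in W}\int_W|\tilde{\psi}_X(Y)|^2\,dv_g(X)\geq\frac{|\psi(0)|^2}{4}\omega_{2n}(\delta/\sqrt{C_0})^{2n}>0,$$
which is precisely \eqref{non-degen-ele-cond-int-cottt}. The only substantive point in the whole argument is aligning the $(\cdot,\cdot)_W$-geometry produced by $(L^{-1}Q_X)^{1/2}$ with the quadratic form $g_X$; after that the bounds are mechanical, and the symplectic hypothesis enters solely to convert $g^{\sigma}$ back into $g$ and to make $|g|$ constant in the volume computation of the last paragraph.
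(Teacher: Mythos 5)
Your proof is correct and follows essentially the same route as the paper's: you reduce strong measurability to Lemma \ref{lemma-regularity-sta} via the measurability of $X\mapsto(L^{-1}Q_X)^{1/2}$, derive \eqref{est-for-con-par-als} from the chain rule together with the identity $|(L^{-1}Q_X)^{1/2}T|_W^2=g_X(T)$ and the Schwartz decay of $\psi$, note $g^\sigma_X(Y-U_{X,r})\leq g_X(X-Y)$ in the symplectic case to deduce membership in $\Conf_g(W;r)$, and use slow variation plus the volume normalization $|g_X|\equiv 1$ (Remark \ref{equ-for-sym-metr-meas-lebesmes}) to bound the integral \eqref{non-degen-ele-cond-int-cottt} from below. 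The only differences are cosmetic (your $\delta$ versus the paper's $r_1$ and $\varepsilon$, and your explicit $\omega_{2n}$-constant versus the paper's unnamed $c'$).
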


\begin{proof} Denote by $|\cdot|$ the norm on $W$ induced by $(\cdot,\cdot)_W$ and set $\tilde{Q}_X:=L^{-1}Q_X$, $X\in W$. Clearly $\tilde{\psi}_X\in\SSS(W)$, $X\in W$, and the strong measurability of $W\rightarrow \SSS(W)$, $X\mapsto \tilde{\psi}_X$, follows from Lemma \ref{lemma-regularity-sta}. To prove the bounds \eqref{est-for-con-par-als}, let $T_1,\ldots,T_l\in W\backslash\{0\}$ and $N\in\NN$. Then
$$
|\partial_{T_1;Y}\ldots \partial_{T_l;Y}\tilde{\psi}_X(Y)|= |\psi^{(l)}(\tilde{Q}_X^{1/2}(Y-X);\tilde{Q}_X^{1/2}T_1,\ldots,\tilde{Q}^{1/2}_XT_l)|\leq \frac{C\prod_{j=1}^l|\tilde{Q}^{1/2}_XT_j|}{(1+|\tilde{Q}^{1/2}_X(Y-X)|)^N}.
$$
Since $|\tilde{Q}^{1/2}_XT|=g_X(T)^{1/2}$, $ T\in W$, the bounds \eqref{est-for-con-par-als} immediately follow.\\
\indent Assume now that $g$ is symplectic. The fact that $W\rightarrow \SSS(W)$, $X\mapsto \tilde{\psi}_X$, belongs to $\Conf_g(W;r)$, for all $r\in(0,r_0]$, is an immediate consequence of \eqref{est-for-con-par-als}. To prove that it is non-degenerate when $\psi(0)\neq 0$, pick $r_1,c_1>0$ such that $|\psi(X)|\geq c_1$, for all $|X|\leq r_1$. Set $\varepsilon:=\min\{r_0,r_1/\sqrt{C_0}\}$. Let $Y\in W$ be arbitrary but fixed. Then $|\psi(\tilde{Q}^{1/2}_X(Y-X))|\geq c_1$, $ X\in U_{Y,\varepsilon}$. To verify this, notice that when $X\in U_{Y,\varepsilon}$ we have
$$
|\tilde{Q}^{1/2}_X(Y-X)|^2=g_X(Y-X)\leq C_0g_Y(Y-X)\leq r_1^2,
$$
which implies the claim. In view of Remark \ref{equ-for-sym-metr-meas-lebesmes}, we infer
\begin{align*}
\int_W|\tilde{\psi}_X(Y)|^2dv_g(X)&\geq \int_{U_{Y,\varepsilon}} |\psi(\tilde{Q}^{1/2}_X(Y-X))|^2 |g_Y|^{1/2}dX\\
&\geq c_1^2\int_{U_{Y,\varepsilon}} |g_Y|^{1/2}dX= c'.
\end{align*}
Consequently, $X\mapsto \tilde{\psi}_X$ is non-degenerate.
\end{proof}

\begin{remark}\label{cont-metr-par-co}
If $g$ is continuous (not necessarily symplectic), it is straightforward to show that $W\rightarrow \SSS(W)$, $X\mapsto \tilde{\psi}_X$, is also continuous (by employing Taylor expansion).
\end{remark}

The following lemma gives an example of a family $\{\theta_X\,|\, X\in W\}$ which satisfies all of the assumptions in Theorem \ref{main-theorem-dia}.

\begin{lemma}\label{fam-sat-con-maithforvar}
Let $(\cdot,\cdot)_W$ and $L$ be as in Theorem \ref{main-theorem-dia}. Let $r>0$ and let $\theta_0\in \mathcal{C}^{\infty}([0,\infty))\backslash\{0\}$ be any non-negative and non-increasing function which satisfies $\supp \theta_0\subseteq [0,1]$. Set $\tilde{r}:=\min\{r_0,r\}$ and define $\theta_X(Y):=\theta_0(\tilde{r}^{-2}g_X(X-Y))$, $X,Y\in W$. For any $\chi\in\SSS(V)$ satisfying
\begin{equation}\label{equ-for-con-on-thefun-tobe-fam}
W(\chi,\chi)(0)=2^n\int_V \chi(x)\overline{\chi(-x)}dx\neq 0,
\end{equation}
the map $\boldsymbol{\varphi}:W\rightarrow \SSS(W)$, $\boldsymbol{\varphi}(X)=\varphi_X$, with $\varphi_X$ given by \eqref{def-for-var-for-non-deg-ele-confmainthe}, is a non-degenerate element of $\Conf_g(W;\min\{r_0,r\})$.
\end{lemma}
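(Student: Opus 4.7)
My plan is to factor $\varphi_X=\overline{\theta_X}\psi_X$, where $\psi_X(Y):=W(\chi,\chi)((L^{-1}Q_X)^{1/2}(Y-X))$, and set $\tilde r:=\min\{r_0,r\}$. Three items then need to be verified: strong measurability of $\boldsymbol{\varphi}$, uniform boundedness of its confinement seminorms, and the lower bound in \eqref{non-degen-ele-cond-int-cottt}.

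Strong measurability of $X\mapsto\theta_X$ into $\SSS(W)$ follows from Lemma~\ref{lemma-regularity-sta1} applied to $\tilde g:=\tilde r^{-2}g$ together with a compactly supported smooth extension of $\theta_0$ to $\RR$, while strong measurability of $X\mapsto\psi_X$ is exactly the content of Lemma~\ref{lemma-for-wig-gen-dec} applied with $\psi:=W(\chi,\chi)\in\SSS(W)$; their pointwise product is then strongly measurable by continuity of multiplication in $\SSS(W)$. For the confinement bounds, the crucial observation is that $\supp\theta_X\subseteq U_{X,\tilde r}$, and since $\theta_0$ vanishes with all derivatives on $[1,\infty)$ (smoothness together with $\supp\theta_0\subseteq[0,1]$), $\theta_X$, and hence $\varphi_X$, together with all their $Y$-derivatives vanish off $U_{X,\tilde r}$; on $U_{X,\tilde r}$ the weight $(1+g^\sigma_X(Y-U_{X,\tilde r}))^{k/2}$ equals $1$. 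Thus $\|\varphi_X\|^{(k)}_{g_X,U_{X,\tilde r}}$ reduces to $\sup_{l\leq k}\sup_{Y\in U_{X,\tilde r},T_j}|\varphi_X^{(l)}(Y;T_1,\ldots,T_l)|/\prod_j g_X(T_j)^{1/2}$. By Leibniz this splits into products of derivatives of $\theta_X$ and $\psi_X$: the former are uniformly controlled via the chain-rule bound $|g_X(X-Y,T)|\leq\tilde r\, g_X(T)^{1/2}$ on the support and boundedness of $\theta_0^{(k)}$ on $[0,1]$ (this is the argument that $\theta_X\in\Conf_g(W;\tilde r)$ from Example~\ref{exi-of-good-par-off}$(i)$, based on \cite[Theorem 2.2.7]{lernerB}); the latter are uniformly bounded by $\prod_j g_X(T_j)^{1/2}$ via \eqref{est-for-con-par-als} with $N=0$. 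Combining yields $\sup_X\|\varphi_X\|^{(k)}_{g_X,U_{X,\tilde r}}<\infty$ for every $k$.

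For non-degeneracy, I combine two pointwise lower bounds with a uniform volume estimate. Since $W(\chi,\chi)$ is continuous with $W(\chi,\chi)(0)\neq 0$, choose $r_1,c_1>0$ with $|W(\chi,\chi)(Z)|\geq c_1$ whenever $|Z|\leq r_1$ in the norm induced by $(\cdot,\cdot)_W$; the identity $|(L^{-1}Q_X)^{1/2}(Y-X)|^2=g_X(Y-X)$ translates this to $|\psi_X(Y)|\geq c_1$ for $g_X(X-Y)\leq r_1^2$. As $\theta_0$ is non-zero, non-negative and non-increasing, $\theta_0(0)>0$, so by continuity $\theta_0\geq c_0>0$ on some $[0,t_0]$, giving $\theta_X(Y)\geq c_0$ for $g_X(X-Y)\leq t_0\tilde r^2$. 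Set $\delta^2:=\min\{t_0\tilde r^2,r_1^2\}$ and $\varepsilon:=\min\{r_0,\delta/\sqrt{C_0}\}$; slow variation yields $g_X(X-Y)\leq C_0 g_Y(X-Y)\leq\delta^2$ for $X\in U_{Y,\varepsilon}$, hence $|\varphi_X(Y)|^2\geq c_0^2 c_1^2$ on $U_{Y,\varepsilon}$. Finally, $U_{Y,\varepsilon}$ is a $g_Y$-ellipsoid of Lebesgue volume $c_{2n}\varepsilon^{2n}|g_Y|^{-1/2}$ in the symplectic basis underlying $d\lambda$, and \eqref{ineq-for-metric-p-5} together with $\varepsilon\leq r_0$ gives $|g_X|^{1/2}\geq C_0^{-n}|g_Y|^{1/2}$ on $U_{Y,\varepsilon}$, so
$$
\int_W|\varphi_X(Y)|^2\,dv_g(X)\geq c_0^2 c_1^2\int_{U_{Y,\varepsilon}}|g_X|^{1/2}\,d\lambda(X)\geq c_0^2 c_1^2 C_0^{-n} c_{2n}\varepsilon^{2n},
$$
uniformly in $Y$, establishing \eqref{non-degen-ele-cond-int-cottt}.

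The main obstacle is this last step: extracting a uniform-in-$Y$ positive lower bound on the $dv_g$-measure of a small $g_Y$-ball, which requires balancing the Lebesgue volume of the ellipsoid (scaling with $|g_Y|^{-1/2}$) against the $|g|^{1/2}$ factor in $dv_g$ via slow variation, and choosing $\varepsilon\leq r_0$ so that \eqref{ineq-for-metric-p-5} applies. Everything else is a direct combination of the support property of $\theta_X$ with the already-established bounds on $\psi_X$ from Lemma~\ref{lemma-for-wig-gen-dec}.
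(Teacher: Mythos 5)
Your proof is correct and follows essentially the same route as the paper: strong measurability from Lemma~\ref{lemma-regularity-sta1} and Lemma~\ref{lemma-for-wig-gen-dec}, uniform confinement from the compact support of $\theta_X$ in $U_{X,\tilde r}$ combined with the bounds~\eqref{est-for-con-par-als} on $\psi_X$, and non-degeneracy by integrating the pointwise lower bound $|\varphi_X(Y)|^2\geq c_0^2c_1^2$ over $U_{Y,\varepsilon}$ and using~\eqref{ineq-for-metric-p-5} to control $|g_X|^{1/2}$ by $|g_Y|^{1/2}$. The only cosmetic difference is that you make the $Y$-independence of $\int_{U_{Y,\varepsilon}}|g_Y|^{1/2}\,d\lambda$ explicit via the ellipsoid volume formula, whereas the paper leaves this as an implicit computation of the constant $c''$.
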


\begin{remark}\label{rem-for-exa-famisatisallofassummainthe}
Given a $g$-admissible weight $M$ (resp., a uniformly admissible weight $\eta$ with respect to $g$) with a slow variation constant $r$, the lemma shows that $\{\theta_X\,|\, X\in W\}$ satisfies all of the assumptions in Theorem \ref{main-theorem-dia} (resp., in Theorem \ref{charc-of-symplec-mod-space-with-growth}) for any $\chi\in\SSS(V)$ that satisfies \eqref{equ-for-con-on-thefun-tobe-fam}.
\end{remark}

\begin{proof} Denote by $|\cdot|$ the norm on $W$ induced by $(\cdot,\cdot)_W$. Set $\tilde{Q}_X:=L^{-1}Q_X$, $X\in W$. The map $W\rightarrow \SSS(W)$, $X\mapsto \theta_X$, is strongly measurable in view of Lemma \ref{lemma-regularity-sta1} and, similarly as in the proof of \cite[Theorem 2.2.7, p. 70]{lernerB}, one can show that it belongs to $\Conf_g(W;\tilde{r})$. Consequently, Lemma \ref{lemma-for-wig-gen-dec} implies that $\boldsymbol{\varphi}\in\Conf_g(W;\tilde{r})$. It remains to show that $\boldsymbol{\varphi}$ is non-degenerate. There is $0<\varepsilon'<1$ such that $\theta_0\geq \varepsilon'$ on $[0,\varepsilon']$. Pick $r_1,c_1>0$ such that $|W(\chi,\chi)(X)|\geq c_1$, for all $|X|\leq r_1$, and set $\varepsilon:=\sqrt{\varepsilon'/C_0}\min\{\tilde{r},r_1\}$. For $Y\in W$ fixed, we employ \eqref{ineq-for-metric-p-5} and estimate as follows:
\begin{align*}
\int_W|\varphi_X(Y)|^2dv_g(X)&\geq \int_{U_{Y,\varepsilon}}\theta_0(\tilde{r}^{-2}g_X(X-Y))^2 |W(\chi,\chi)(\tilde{Q}^{1/2}_X(Y-X))|^2 |g_X|^{1/2}dX\\
&\geq C^{-n}_0\int_{U_{Y,\varepsilon}} \theta_0(\tilde{r}^{-2}C_0g_Y(X-Y))^2|W(\chi,\chi)(\tilde{Q}^{1/2}_X(Y-X))|^2 |g_Y|^{1/2}dX.
\end{align*}
When $X\in U_{Y,\varepsilon}$, we have $\tilde{r}^{-2}C_0g_Y(X-Y)\leq \varepsilon'$ and
\begin{equation*}
|\tilde{Q}^{1/2}_X(Y-X)|=g_X(X-Y)^{1/2}\leq \sqrt{C_0}g_Y(X-Y)^{1/2}\leq r_1.
\end{equation*}
Consequently,
$$
\int_W|\varphi_X(Y)|^2dv_g(X)\geq C^{-n}_0\varepsilon'^2c_1^2\int_{U_{Y,\varepsilon}}|g_Y|^{1/2}dX= c'',\quad Y\in W,
$$
which verifies that $\boldsymbol{\varphi}$ is non-degenerate.
\end{proof}

A typical example for $\chi$ is to take a Gaussian. To be precise, let $e_1,\ldots,e_n$ be a basis for $V$ such that the volume with respect to the measure $dx$ of the parallelepiped formed with these vectors is $1$, and let $e'_1,\ldots,e'_n$ be the dual basis of $V'$. If we set
$$
\chi(x):=e^{-\pi\sum_{j=1}^n|\langle e'_j,x\rangle|^2},\quad x\in V,
$$
then $W(\chi,\chi)(x,\xi)=2^{n/2}e^{-2\pi\sum_{j=1}^n(|\langle e'_j,x\rangle|^2+|\langle \xi,e_j\rangle|^2)}$ (see \cite[Equation (4.20), p. 72]{Gr1}).\\
\indent Assume now that $g$ is a symplectic H\"ormander metric on $W$ and let $(\cdot,\cdot)_W$ and $L$ be as in Theorem \ref{main-theorem-dia}. As an immediate consequence of Lemma \ref{lemma-for-wig-gen-dec}, we can even take $\theta_X(Y)=1$ and $X\mapsto \varphi_X:=W(\chi,\chi)((L^{-1}Q_X)^{1/2}(\cdot-X))$ will still belong to $\Conf_g(W;r)$ for any $r\in(0,r_0]$ and any $\chi\in \SSS(V)$. Moreover, if $\chi$ satisfies \eqref{equ-for-con-on-thefun-tobe-fam}, then Lemma \ref{lemma-for-wig-gen-dec} also yields that $X\mapsto \varphi_X$ is non-degenerate. Furthermore, Lemma \ref{lema-for-sym-root-quadr} $(ii)$ implies that $\tilde{Q}_X^{1/2}:=(L^{-1}Q_X)^{1/2}\in \Sp(W)$, $X\in W$. Denote by $\Pi$ the surjective homomorphism $\Pi:\Mp(W)\rightarrow \Sp(W)$. The metaplectic covariance of the Weyl quantisation \cite[Theorem 7.13, p. 205]{deGosson} (cf. \cite[Theorem 4.3]{hormander}) yields that there is $\Phi^{g,L}_X\in\Mp(W)$ such that
$$
\Pi(\Phi^{g,L}_X)=\tilde{Q}^{-1/2}_X\quad \mbox{and}\quad (\Psi^{g,L}_Xa)^w=(\Phi^{g,L}_X)^*a^w\Phi^{g,L}_X,\quad  X\in W,\, a\in\SSS'(W)
$$
(there are two such operators and they only differ by a sign). Let $\tau_X$, $X\in W$, be the unitary operator $\tau_{(x,\xi)}\chi(y)=e^{2\pi i\langle \xi,y-x/2\rangle}\chi(y-x)$, $\chi\in \SSS(V)$, $(x,\xi)\in W$. Clearly, $\tau_{(x,\xi)}=e^{-\pi i\langle \xi,x\rangle}\pi(x,\xi)$. Since $\Omega\tau_X\Omega^*=\tau_{\Pi(\Omega)X}$, $\Omega\in\Mp(W)$, $X\in W$
(see \cite[Theorem 7.13, p. 205]{deGosson}), we infer that for each $X,Y\in W$, $\pi(\tilde{Q}^{1/2}_Y X)$ is equal to $(\Phi^{g,L}_Y)^*\pi(X)\Phi^{g,L}_Y$ up to a constant of modulus $1$. Consequently, the above discussion gives
\begin{multline}\label{equ-sym-metaplect-operators-for-weylq}
\left|\left\langle \left(\Psi^{g,L}_{\frac{X+\Xi}{2}}a\right)^w \pi\left(\tilde{Q}_{\frac{X+\Xi}{2}}^{1/2}X\right)\chi, \overline{\pi\left(\tilde{Q}_{\frac{X+\Xi}{2}}^{1/2}\Xi\right)\chi}\right\rangle\right|\\
=\left|\left\langle a^w\pi(X) \Phi^{g,L}_{\frac{X+\Xi}{2}}\chi, \overline{\pi(\Xi)\Phi^{g,L}_{\frac{X+\Xi}{2}}\chi}\right\rangle\right|, \; X,\Xi\in W,
\end{multline}
for all $a\in\SSS'(W)$, $\chi\in\SSS(V)$. We proved the following consequence of Theorem \ref{main-theorem-dia}.

\begin{corollary}\label{cor-for-sym-metric-metplectope}
Assume that $g$ is a symplectic H\"ormander metric and let $a\in \SSS'(W)$ and $\chi\in\SSS(V)$. Let $M$, $r$, $L$ and $\Psi^{g,L}_X$, $X\in W$, be as in Theorem \ref{main-theorem-dia}. Then $(L^{-1}Q_X)^{1/2}\in\Sp(W)$, for all $X\in W$. For each $X\in W$, choose $\Phi^{g,L}_X\in\Mp(W)$ such that $\Pi(\Phi^{g,L}_X)=(L^{-1}Q_X)^{-1/2}$ and consider the following conditions:
\begin{itemize}
\item[$(i)$] $a\in S(M,g)$;
\item[$(ii)$] for each $N\in\NN$, the function
    \begin{multline*}
    (X,\Xi)\mapsto M((X+\Xi)/2)^{-1}(1+g_{\frac{X+\Xi}{2}}(X-\Xi))^N\\
    \cdot\left\langle \left(\Psi^{g,L}_{\frac{X+\Xi}{2}}a\right)^w \pi\left((L^{-1}Q_{\frac{X+\Xi}{2}})^{1/2}X\right)\chi, \overline{\pi\left((L^{-1}Q_{\frac{X+\Xi}{2}})^{1/2}\Xi\right)\chi}\right\rangle
    \end{multline*}
    belongs to $L^{\infty}(W\times W)$;
\item[$(iii)$] for each $N\in\NN$, the function
    \begin{equation*}
    (X,\Xi)\mapsto M((X+\Xi)/2)^{-1}(1+g_{\frac{X+\Xi}{2}}(X-\Xi))^N \left|\left\langle a^w\pi(X) \Phi^{g,L}_{\frac{X+\Xi}{2}}\chi, \overline{\pi(\Xi)\Phi^{g,L}_{\frac{X+\Xi}{2}}\chi}\right\rangle\right|
    \end{equation*}
    belongs to $L^{\infty}(W\times W)$.
\end{itemize}
Then $(i)\Rightarrow (ii)\Leftrightarrow(iii)$. If, furthermore, $\chi$ satisfies \eqref{equ-for-con-on-thefun-tobe-fam}, then $(i)\Leftrightarrow (ii)\Leftrightarrow(iii)$.
\end{corollary}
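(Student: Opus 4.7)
The plan is to package together the content of the paragraph preceding the statement with Theorem \ref{main-theorem-dia}, so essentially no new analytic input is needed, only careful algebraic bookkeeping. First I would verify that $(L^{-1}Q_X)^{1/2}\in\Sp(W)$ whenever $g$ is symplectic. Indeed, $g=g^{\sigma}$ means $Q_X=Q^{\sigma}_X$, so Lemma \ref{lema-for-sym-root-quadr}$(ii)$ applied with $Q=Q^{\sigma}=Q_X$ yields, via \eqref{equ-for-movin-accrr-sym-form-quad-root}, the identity $[(L^{-1}Q_X)^{1/2}Y_1,Y_2]=[Y_1,(L^{-1}Q_X)^{-1/2}Y_2]$ for all $Y_1,Y_2\in W$, which is the defining property of a symplectic map. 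This allows a metaplectic lift: choose $\Phi^{g,L}_X\in\Mp(W)$ with $\Pi(\Phi^{g,L}_X)=(L^{-1}Q_X)^{-1/2}$, and by the metaplectic covariance of the Weyl calculus one obtains $(\Psi^{g,L}_Xa)^w=(\Phi^{g,L}_X)^*a^w\Phi^{g,L}_X$.

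Next, to establish $(ii)\Leftrightarrow(iii)$, I would rewrite the matrix coefficient in $(ii)$. Using $\Omega\tau_Y\Omega^*=\tau_{\Pi(\Omega)Y}$ together with $\tau_Y=e^{-\pi i\langle\xi,x\rangle}\pi(Y)$ for $Y=(x,\xi)$, one sees that $\pi((L^{-1}Q_X)^{1/2}Y)$ equals $(\Phi^{g,L}_X)^*\pi(Y)\Phi^{g,L}_X$ up to a unimodular scalar that disappears when taking absolute values. Substituting this into both time-frequency shifts inside the inner product of $(ii)$ gives exactly the identity \eqref{equ-sym-metaplect-operators-for-weylq}, and hence the equivalence with $(iii)$ is immediate once one multiplies by the common factor $M((X+\Xi)/2)^{-1}(1+g_{(X+\Xi)/2}(X-\Xi))^N$.

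Finally, for the implications involving $(i)$ I would apply Theorem \ref{main-theorem-dia} with $\theta_X\equiv 1$. Then the family $\varphi_X$ in \eqref{def-for-var-for-non-deg-ele-confmainthe} reduces to $\overline{W(\chi,\chi)((L^{-1}Q_X)^{1/2}(\cdot-X))}$, which (modulo complex conjugation, irrelevant for confinement and non-degeneracy) is precisely the family $\tilde{\psi}_X$ produced by Lemma \ref{lemma-for-wig-gen-dec} with $\psi=W(\chi,\chi)\in\SSS(W)$. Since $g$ is symplectic, that lemma places $\boldsymbol{\varphi}$ in $\Conf_g(W;r)$ for every $r\in(0,r_0]$, yielding the implication $(i)\Rightarrow(ii)$. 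If moreover $W(\chi,\chi)(0)\neq 0$, the same lemma provides non-degeneracy, which via Theorem \ref{main-theorem-dia} upgrades this to the converse $(ii)\Rightarrow(i)$, completing the claimed equivalences.

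The main obstacle, modest in substance but requiring care, is the bookkeeping of unimodular scalars in the metaplectic-to-symplectic passage: both the operator square root on $\Sym_+(W,(\cdot,\cdot)_W)$ and the double cover $\Pi:\Mp(W)\to\Sp(W)$ carry inherent sign ambiguities, so one has to check that the identity \eqref{equ-sym-metaplect-operators-for-weylq} is independent of the particular choice of lift $\Phi^{g,L}_X$ (the ambiguity $\pm\Phi^{g,L}_X$ cancels between the two occurrences in $(\Phi^{g,L}_X)^*\pi(Y)\Phi^{g,L}_X$) and that the phase converting $\tau_Y$ to $\pi(Y)$ is absorbed by the absolute value. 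Beyond this, no new estimate is required once Lemmas \ref{lema-for-sym-root-quadr} and \ref{lemma-for-wig-gen-dec} and Theorem \ref{main-theorem-dia} are in hand.
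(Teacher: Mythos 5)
Your proposal is correct and follows essentially the same route as the paper: symplecticity of $(L^{-1}Q_X)^{1/2}$ via Lemma \ref{lema-for-sym-root-quadr}$(ii)$, metaplectic covariance and the $\tau$-intertwining to pass between $(ii)$ and $(iii)$ up to unimodular phases absorbed by the absolute values, and Theorem \ref{main-theorem-dia} with $\theta_X\equiv 1$ combined with Lemma \ref{lemma-for-wig-gen-dec} for the implications involving $(i)$. One small correction to your closing remark: $(L^{-1}Q_X)^{1/2}$ is the \emph{unique} symmetric positive-definite square root in $\Sym_+(W,(\cdot,\cdot)_W)$, so it carries no sign ambiguity; only the metaplectic lift $\Phi^{g,L}_X$ is determined up to $\pm 1$.
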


\begin{remark}
In view of \eqref{fun-for-measur} and \eqref{equ-sym-metaplect-operators-for-weylq}, the functions in $(ii)$ and $(iii)$ are measurable for all $a\in\SSS'(W)$ and $\chi\in\SSS(V)$. If both $g$ and $M$ are continuous, then these functions are also continuous for all $a\in\SSS'(W)$ and $\chi\in\SSS(V)$ (cf. Remark \ref{rem-for-con-smooth-metr-fun} and Remark \ref{cont-metr-par-co}).
\end{remark}

The above discussion also proves the following consequence of Theorem \ref{charc-of-symplec-mod-space-with-growth} (cf. Remark \ref{equ-for-sym-metr-meas-lebesmes}).

\begin{corollary}
Assume that $g$ is a symplectic H\"ormander metric and let $a\in \SSS'(W)$, $\chi\in\SSS(V)$ and $1\leq p\leq \infty$. Let $\eta$, $r$, $L$ and $\Psi^{g,L}_X$, $X\in W$, be as in Theorem \ref{charc-of-symplec-mod-space-with-growth}. Then $(L^{-1}Q_X)^{1/2}\in\Sp(W)$, for all $X\in W$. For each $X\in W$, choose $\Phi^{g,L}_X\in\Mp(W)$ such that $\Pi(\Phi^{g,L}_X)=(L^{-1}Q_X)^{-1/2}$ and consider the following conditions:
\begin{itemize}
\item[$(i)$] $a\in \widetilde{\MM}^{p,p}_{\eta}$;
\item[$(ii)$] the function
    \begin{multline*}
    (X,\Xi)\mapsto \eta((X+\Xi)/2, \Xi-X)\\
    \cdot\left\langle \left(\Psi^{g,L}_{\frac{X+\Xi}{2}}a\right)^w \pi\left((L^{-1}Q_{\frac{X+\Xi}{2}})^{1/2}X\right)\chi, \overline{\pi\left((L^{-1}Q_{\frac{X+\Xi}{2}})^{1/2}\Xi\right)\chi}\right\rangle
    \end{multline*}
    belongs to $L^p(W\times W)$;
\item[$(iii)$] the function
    \begin{equation*}
    (X,\Xi)\mapsto \eta((X+\Xi)/2, \Xi-X)\left|\left\langle a^w\pi(X) \Phi^{g,L}_{\frac{X+\Xi}{2}}\chi, \overline{\pi(\Xi)\Phi^{g,L}_{\frac{X+\Xi}{2}}\chi}\right\rangle\right|
    \end{equation*}
    belongs to $L^p(W\times W)$.
\end{itemize}
Then $(i)\Rightarrow (ii)\Leftrightarrow(iii)$. If, furthermore, $\chi$ satisfies \eqref{equ-for-con-on-thefun-tobe-fam}, then $(i)\Leftrightarrow (ii)\Leftrightarrow(iii)$.
\end{corollary}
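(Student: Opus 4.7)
The plan is to specialise Theorem \ref{charc-of-symplec-mod-space-with-growth} to the symplectic case with the trivial cutoff $\theta_X\equiv 1$, and then transfer condition $(ii)$ to $(iii)$ via the identity \eqref{equ-sym-metaplect-operators-for-weylq} that was established in the paragraph preceding the previous corollary.

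First I would argue that $(L^{-1}Q_X)^{1/2}\in\Sp(W)$. Since $g=g^{\sigma}$, we have $Q^{\sigma}_X=Q_X$, so the second identity in \eqref{equ-for-movin-accrr-sym-form-quad-root} gives
\begin{equation*}
[(L^{-1}Q_X)^{1/2}S,(L^{-1}Q_X)^{1/2}T]=[S,(L^{-1}Q_X)^{-1/2}(L^{-1}Q_X)^{1/2}T]=[S,T]
\end{equation*}
for all $S,T\in W$, so $(L^{-1}Q_X)^{1/2}$ preserves the symplectic form. Consequently the metaplectic lifts $\Phi^{g,L}_X\in\Mp(W)$ with $\Pi(\Phi^{g,L}_X)=(L^{-1}Q_X)^{-1/2}$ are well defined (up to sign).

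Next I would apply Theorem \ref{charc-of-symplec-mod-space-with-growth} with $\theta_X\equiv 1\in\mathcal{O}_{\mathcal{M}}(W)$. Because $g$ is symplectic, Remark \ref{equ-for-sym-metr-meas-lebesmes} yields $|g_X|\equiv 1$, so the weight $|g_{(X+\Xi)/2}|^{-1/2+3/(2p)}$ equals $1$; moreover $Q^{\sigma}_X=Q_X$ forces $(L^{-1}Q^{\sigma}_{(X+\Xi)/2})^{-1/2}(L^{-1}Q_{(X+\Xi)/2})^{1/2}=I$, so the $\eta$-argument in Theorem \ref{charc-of-symplec-mod-space-with-growth}~$(ii)$ collapses to $\Xi-X$. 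Thus the function in $(ii)$ of Theorem \ref{charc-of-symplec-mod-space-with-growth} coincides with the one in $(ii)$ of the corollary. To check the standing hypothesis of that theorem, note that with $\theta_X\equiv 1$ the symbol $\varphi_X$ from \eqref{def-for-var-for-non-deg-ele-confmainthe} reduces to $W(\chi,\chi)((L^{-1}Q_X)^{1/2}(\cdot-X))$; Lemma \ref{lemma-for-wig-gen-dec} (applied with $\psi:=W(\chi,\chi)\in\SSS(W)$) gives $\boldsymbol{\varphi}\in\Conf_g(W;r)$ for every $r\in(0,r_0]$, and delivers non-degeneracy precisely under \eqref{equ-for-con-on-thefun-tobe-fam}. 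This yields $(i)\Rightarrow(ii)$ in general and $(ii)\Rightarrow(i)$ when $\chi$ satisfies \eqref{equ-for-con-on-thefun-tobe-fam}.

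Finally, the equivalence $(ii)\Leftrightarrow(iii)$ follows directly from the pointwise identity \eqref{equ-sym-metaplect-operators-for-weylq}, which rests on the metaplectic covariance of the Weyl quantisation and the intertwining $\Omega\tau_X\Omega^{*}=\tau_{\Pi(\Omega)X}$. There is no serious obstacle: once Theorem \ref{charc-of-symplec-mod-space-with-growth} and the metaplectic identity are at hand, the proof is pure bookkeeping. The only minor care needed is to verify that the $\eta$-argument and the volume weight genuinely simplify as claimed, which is transparent from $g=g^{\sigma}$ together with $|g_X|\equiv 1$.
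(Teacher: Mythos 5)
Your proof is correct and follows essentially the same route as the paper: specialise Theorem \ref{charc-of-symplec-mod-space-with-growth} to $\theta_X\equiv1$ with $\boldsymbol{\varphi}$ given by Lemma \ref{lemma-for-wig-gen-dec}, use $Q^\sigma_X=Q_X$ and $|g_X|\equiv1$ (Remark \ref{equ-for-sym-metr-meas-lebesmes}) to collapse the volume weight and the $\eta$-argument, and then pass between $(ii)$ and $(iii)$ via the metaplectic identity \eqref{equ-sym-metaplect-operators-for-weylq}. This is exactly the "above discussion" the paper invokes, so there is nothing to add.
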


We now specialise the main results to the commonly used calculi. These are all generated by H\"ormander metrics on $\RR^{2n}$ of the form
\begin{equation}\label{beals-fferf-metric-gen}
g_{(x,\xi)}=f(x,\xi)^{-2}|dx|^2+F(x,\xi)^{-2}|d\xi|^2,
\end{equation}
with $f$ and $F$ positive measurable functions on $\RR^{2n}$ (notice that \eqref{beals-fferf-metric-gen} is symplectic if and only if $F=1/f$). In this case, employing the standard inner product on $\RR^{2n}$, $\tilde{Q}^{1/2}_{(x,\xi)}$ is just a diagonal matrix whose first $n$ entries along the diagonal are $1/f(x,\xi)$ and the second $n$ entries are $1/F(x,\xi)$. The construction in Lemma \ref{fam-sat-con-maithforvar} amounts to
\begin{equation}\label{def-of-tht-for-ord-cal}
\theta_{(x,\xi)}(y,\eta)=\theta_0\left(\frac{|x-y|^2}{\tilde{r}^2f(x,\xi)^2}+ \frac{|\xi-\eta|^2}{\tilde{r}^2F(x,\xi)^2}\right),\quad (x,\xi),(y,\eta)\in\RR^{2n},
\end{equation}
and the isomorphism \eqref{change-of-var-for-dualit} is given by
\begin{equation}\label{iso-ind-by-the-met-in-cas-ordmecal}
\Psi_{(x,\xi)}\psi(y,\eta)=\psi(f(x,\xi)y,F(x,\xi)\eta),\quad \psi\in\SSS(\RR^{2n}).
\end{equation}
If $g$ is a symplectic H\"ormander metric on $\RR^{2n}$ of the form \eqref{beals-fferf-metric-gen} (hence, $F=1/f$), then the metaplectic operators $\Phi_{(x,\xi)}$, $(x,\xi)\in\RR^{2n}$, which satisfy $\Pi(\Phi_{(x,\xi)})=\tilde{Q}^{-1/2}_{(x,\xi)}$ are given by \cite[Proposition 7.8, p. 202]{deGosson}
\begin{equation}\label{metap-op-for-sym-hor-met-onr}
\Phi_{(x,\xi)}\chi(y)=\pm f(x,\xi)^{-n/2}\chi(y/f(x,\xi)).
\end{equation}
Combining Theorem \ref{main-theorem-dia}, Corollary \ref{cor-for-sym-metric-metplectope}, Remark \ref{sim-con-iim-for-evewo}, Lemma \ref{lemma-for-wig-gen-dec} and Lemma \ref{fam-sat-con-maithforvar}, we deduce the following result.

\begin{corollary}
Let $g$ be a H\"ormander metric on $\RR^{2n}$ given by \eqref{beals-fferf-metric-gen} with slow variation constant $r_0$ and let $M$ be a $g$-admissible weight with slow variation constant $r$. For each $(x,\xi)\in\RR^{2n}$, let $\Psi_{(x,\xi)}:\SSS'(\RR^{2n})\rightarrow\SSS'(\RR^{2n})$ be the isomorphism given by \eqref{iso-ind-by-the-met-in-cas-ordmecal} and define $\theta_{(x,\xi)}\in\DD(\RR^{2n})$ by \eqref{def-of-tht-for-ord-cal} with $\tilde{r}:=\min\{r_0,r\}$ and $\theta_0$ as in Lemma \ref{fam-sat-con-maithforvar}. Let $\chi\in\SSS(\RR^n)$ be such that $W(\chi,\chi)(0)\neq 0$. For any $a\in\SSS'(\RR^{2n})$, the following statements are equivalent:
\begin{itemize}
\item[$(i)$] $a\in S(M,g)$, i.e. $a\in\mathcal{C}^{\infty}(\RR^{2n})$ and $\|M^{-1} f^{|\beta|}F^{|\alpha|}\partial^{\alpha}_{\xi} \partial^{\beta}_xa\|_{L^{\infty}(\RR^{2n})}<\infty$, $\alpha,\beta\in\NN$;
\item[$(ii)$] for each $N\in\NN$, the function
\begin{multline*}
(x,\xi,y,\eta)\mapsto M(x,\xi)^{-1} \left(1+\frac{|y|}{f(x,\xi)}+ \frac{|\eta|}{F(x,\xi)}\right)^N\\
\left\langle \left(\Psi_{(x,\xi)}(a \theta_{(x,\xi)})\right)^w \pi\left(\frac{x+y}{f(x,\xi)}, \frac{\xi+\eta}{F(x,\xi)}\right) \chi, \overline{\pi\left(\frac{x-y}{f(x,\xi)}, \frac{\xi-\eta}{F(x,\xi)}\right) \chi}\right\rangle
\end{multline*}
belongs to $L^{\infty}(\RR^{4n})$.
\end{itemize}
If, in addition, $g$ is symplectic (i.e., $F=1/f$) then $(i)$ (and consequently, also $(ii)$) is equivalent to each of the following statements:
\begin{itemize}
\item[$(iii)$] for each $N\in\NN$, the function
\begin{multline*}
(x,\xi,y,\eta)\mapsto M(x,\xi)^{-1} (1+|y|f(x,\xi)^{-1}+ |\eta|f(x,\xi))^N\\
\left\langle (\Psi_{(x,\xi)}a)^w \pi\left(\frac{x+y}{f(x,\xi)}, (\xi+\eta)f(x,\xi)\right) \chi, \overline{\pi\left(\frac{x-y}{f(x,\xi)}, (\xi-\eta)f(x,\xi)\right) \chi}\right\rangle
\end{multline*}
belongs to $L^{\infty}(\RR^{4n})$;
\item[$(iv)$] for each $N\in\NN$, the function
 \begin{multline*}
    (x,\xi,y,\eta)\mapsto M(x,\xi)^{-1} (1+|y|f(x,\xi)^{-1}+ |\eta|f(x,\xi))^N\\
    |\langle a^w\pi(x+y,\xi+\eta) \Phi_{(x,\xi)}\chi, \overline{\pi(x-y,\xi-\eta)\Phi_{(x,\xi)}\chi}\rangle|
    \end{multline*}
    belongs to $L^{\infty}(\RR^{4n})$, where $\Phi_{(x,\xi)}$ is the metaplectic operator \eqref{metap-op-for-sym-hor-met-onr}.
\end{itemize}
\end{corollary}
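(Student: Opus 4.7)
The plan is to specialize Theorem \ref{main-theorem-dia} (in the form of Remark \ref{sim-con-iim-for-evewo}) and, for the symplectic case, Corollary \ref{cor-for-sym-metric-metplectope} to the particular metric \eqref{beals-fferf-metric-gen}, and to verify that the abstract ingredients specialize to the concrete expressions in $(ii)$--$(iv)$.

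First, I would take $(\cdot,\cdot)_W$ to be the standard Euclidean inner product on $\RR^{2n}$, so that $L:W\to W'$ is the canonical identification. With $\sigma(x,\xi)=(-\xi,x)$ one checks $\sigma^{-1}L\sigma^{-1}L=-I$, so $\sigma^{-1}L$ is symplectic by Lemma \ref{lema-for-sym-root-quadr} $(i)$. A direct matrix computation gives $L^{-1}Q_{(x,\xi)} = \operatorname{diag}(f^{-2}I_n, F^{-2}I_n)$, whence $(L^{-1}Q_{(x,\xi)})^{1/2} = \operatorname{diag}(f^{-1}I_n, F^{-1}I_n)$; hence \eqref{change-of-var-for-dualit} takes the form \eqref{iso-ind-by-the-met-in-cas-ordmecal} and the $\theta_{(x,\xi)}$ of the corollary is exactly the one built in Lemma \ref{fam-sat-con-maithforvar}. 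The assumption $W(\chi,\chi)(0)\neq 0$ is precisely \eqref{equ-for-con-on-thefun-tobe-fam}, so Lemma \ref{fam-sat-con-maithforvar} provides a non-degenerate $\boldsymbol{\varphi} \in \Conf_g(W;\min\{r_0,r\})$.

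The equivalence $(i)\Leftrightarrow (ii)$ then follows by applying Theorem \ref{main-theorem-dia}, using the change of variables from Remark \ref{sim-con-iim-for-evewo} to pass to center--displacement coordinates $(x,\xi):=X$, $(y,\eta):=\Xi$: the time-frequency shifts $\pi((L^{-1}Q_X)^{1/2}(X\pm\Xi))$ become $\pi((x\pm y)/f,(\xi\pm\eta)/F)$, and the weight $(1+g_X(\Xi))^N = (1+|y|^2 f^{-2}+|\eta|^2 F^{-2})^N$ is polynomially equivalent to $(1+|y|f^{-1}+|\eta|F^{-1})^{2N}$, so quantification over $N\in\NN$ gives the same family of bounds. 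This reproduces $(ii)$ verbatim.

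For the symplectic case ($F=1/f$), Lemma \ref{lemma-for-wig-gen-dec} shows that the cutoff $\theta_X$ may be dropped: the map $X\mapsto W(\chi,\chi)((L^{-1}Q_X)^{1/2}(\cdot-X))$ is itself a non-degenerate element of $\Conf_g(W;r)$. Thus Corollary \ref{cor-for-sym-metric-metplectope} applies and, after the same change of variables, gives $(i)\Leftrightarrow(iii)$. The equivalence $(iii)\Leftrightarrow(iv)$ reduces to identifying the metaplectic operator: the dilation $\chi(y)\mapsto \pm f^{-n/2}\chi(y/f)$ is metaplectic and projects under $\Pi$ to the symplectic map $(y,\eta)\mapsto(fy,\eta/f)$, which coincides with $\tilde{Q}^{-1/2}_{(x,\xi)}$. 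Hence \eqref{metap-op-for-sym-hor-met-onr} is a legitimate choice of $\Phi^{g,L}_{(x,\xi)}$, and the identity \eqref{equ-sym-metaplect-operators-for-weylq} already embedded in Corollary \ref{cor-for-sym-metric-metplectope} closes the loop. The only step that is not a bookkeeping specialization is the identification of the metaplectic dilation, but this is a standard computation and does not represent a genuine obstacle.
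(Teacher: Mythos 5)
Your proposal is correct and follows the paper's own route: you specialise Theorem \ref{main-theorem-dia} via Remark \ref{sim-con-iim-for-evewo} (and Corollary \ref{cor-for-sym-metric-metplectope} plus Lemma \ref{lemma-for-wig-gen-dec} in the symplectic case), compute $L^{-1}Q_{(x,\xi)}$ explicitly to match \eqref{iso-ind-by-the-met-in-cas-ordmecal}--\eqref{metap-op-for-sym-hor-met-onr}, invoke Lemma \ref{fam-sat-con-maithforvar} for non-degeneracy, and observe the polynomial equivalence between the weights $(1+g_X(\Xi))^N$ and $(1+|y|/f+|\eta|/F)^{N}$ under quantification over $N$, which is exactly what the paper does.
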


The corollary characterises the symbol classes of all of the commonly used calculi. Taking $g$ to be the Euclidian metric on $\RR^{2n}$ (i.e. $f(x,\xi)=F(x,\xi)=1$) and $M(x,\xi)=1$, both $(iii)$ and $(iv)$ reduce to the characterisation of $S^0_{0,0}(\RR^{2n})$ given in \cite[Theorem 6.2 $(i)$-$(ii)$]{gro-rz} (after changing variables with the transformation inverse to the one employed in Remark \ref{sim-con-iim-for-evewo}).

\appendix
\section{Proofs of the facts stated in Subsection \ref{properties}}\label{appendix1-proof-subwithhor-metfa}

The fact that the functions \eqref{funco-for-met-meas-cont-whe-gisco} are always measurable and continuous when the metric is continuous will follow from the following result (see the remarks following it).

\begin{lemma}\label{lemma-meas-cont-metr-obtf}
Let $\mathcal{B}(W)$ be the real vector space of all bilinear mappings $W\times W\rightarrow \RR$. Let $\tilde{g}$ be a Riemannian metric on $W$ and let $\tilde{\tilde{g}}:W\times W\rightarrow \mathcal{B}(W)$, $(X,Y)\mapsto\tilde{\tilde{g}}_{(X,Y)}$, be a measurable map that is symmetric and positive-definite at every point. Denote the corresponding quadratic forms by the same symbols: $\tilde{g}_X(T):=\tilde{g}_X(T,T)$ and $\tilde{\tilde{g}}_{(X,Y)}(T):=\tilde{\tilde{g}}_{(X,Y)}(T,T)$. Let $r_1,r_2\in[0,\infty)$ and, for each $X\in W$, set $\tilde{U}_{X,r_j}:=\{Y\in W\,|\, \tilde{g}_X(X-Y)\leq r^2_j\}$, $j=1,2$. Then the function
\begin{equation}\label{equ-for-fun-met-measur}
W\times W\rightarrow [0,\infty),\quad (X,Y)\mapsto \tilde{\tilde{g}}_{(X,Y)}(\tilde{U}_{X,r_1}-\tilde{U}_{Y,r_2}),
\end{equation}
is measurable. If both $\tilde{g}$ and $\tilde{\tilde{g}}$ are continuous, then \eqref{equ-for-fun-met-measur} is also continuous.
\end{lemma}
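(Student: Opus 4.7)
The plan is to exhibit $F(X,Y) := \tilde{\tilde{g}}_{(X,Y)}(\tilde{U}_{X,r_1}-\tilde{U}_{Y,r_2})$ as the minimum of a jointly well-behaved function over a \emph{fixed} compact parameter space. Once this is done, measurability follows from a countable-infimum argument and continuity from the classical principle that the minimum over a fixed compact set depends continuously on parameters.

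First I would parameterise the confining balls in a uniform way. Fix any basis of $W$, identifying $W \simeq \RR^{2n}$ with its Euclidean inner product, and let $\bar{B}$ denote the closed Euclidean unit ball. For each $X \in W$, let $G_X$ be the Gram matrix of $\tilde{g}_X$ in this basis and let $A_X := G_X^{1/2}$ be its unique symmetric positive-definite square root, so that $\tilde{g}_X(T) = |A_X T|^2$. Since the square root on symmetric positive-definite matrices is a smooth function of its argument, the map $X \mapsto A_X^{-1}$ inherits the regularity of $\tilde{g}$: it is measurable in general and continuous whenever $\tilde{g}$ is. Moreover $\tilde{U}_{X,r_1} = X + r_1 A_X^{-1}\bar{B}$, and likewise for $\tilde{U}_{Y,r_2}$.

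Next I would introduce
$$
h: W\times W \times \bar{B} \times \bar{B} \to [0,\infty),\quad h(X,Y,u,v) := \tilde{\tilde{g}}_{(X,Y)}\bigl(X - Y + r_1 A_X^{-1}u - r_2 A_Y^{-1}v\bigr),
$$
so that $F(X,Y) = \inf_{(u,v)\in \bar{B}\times \bar{B}} h(X,Y,u,v)$, and by compactness of $\bar{B}\times \bar{B}$ together with continuity of $h$ in $(u,v)$, the infimum is attained. When both $\tilde{g}$ and $\tilde{\tilde{g}}$ are continuous, $h$ is jointly continuous on $W\times W\times \bar{B}\times \bar{B}$, and the standard Berge-type result (uniform continuity on compact subsets plus compactness of the feasible set) yields continuity of $F$.

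For measurability in the general case, choose a countable dense subset $\{u_k\}_{k\in\NN}$ of $\bar{B}$. Since for each fixed $(X,Y)$ the function $h(X,Y,\cdot,\cdot)$ is continuous on $\bar{B}\times\bar{B}$, we obtain
$$
F(X,Y) = \inf_{k,l \in \NN} h(X,Y,u_k,u_l),
$$
and each $(X,Y)\mapsto h(X,Y,u_k,u_l)$ is measurable, being a composition of the measurable maps $\tilde{g}$, $\tilde{\tilde{g}}$, $X\mapsto A_X^{-1}$ with smooth operations. Thus $F$ is a countable infimum of measurable functions and hence measurable. The main, but mild, obstacle is justifying that the square root operation preserves measurability; this is immediate because $A\mapsto A^{1/2}$ is smooth and therefore Borel, so composition with the measurable maps $X\mapsto G_X$ stays measurable. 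The remaining verifications are routine.
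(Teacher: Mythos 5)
Your proof is correct, and for the continuity part it takes a genuinely cleaner route than the paper's. The paper shows measurability by introducing, for a countable dense set $\{Z_j\}\subseteq W$, indicator functions $\kappa_{r;Z_j}(X)=\mathbf{1}\{\tilde{g}_X(Z_j)\leq r^2\}$ and the functions $f_{j,k}(X,Y)=\tilde{\tilde{g}}_{(X,Y)}(X-Y+\kappa_{r_1;Z_j}(X)Z_j-\kappa_{r_2;Z_k}(Y)Z_k)$, and then verifies by a case analysis that $\inf_{j,k}f_{j,k}$ reproduces the target function — this mirrors your measurability step in spirit (countable infimum of measurable functions), but works directly with the varying balls. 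For continuity, however, the paper runs a rather intricate $\varepsilon$--$\delta$ estimate, splitting the increment into three terms $S_1,S_2,S_3$ and controlling the geometry of the balls by hand. Your reparameterisation $\tilde{U}_{X,r}=X+rA_X^{-1}\bar{B}$ with $A_X:=G_X^{1/2}$ (smooth square root on positive-definite matrices) turns the minimisation into one over a \emph{fixed} compact set $\bar{B}\times\bar{B}$, at which point joint continuity of $h$ and the elementary Berge/maximum theorem give continuity of the infimum immediately. The one thing worth checking carefully — and you correctly address it — is that $X\mapsto A_X^{-1}$ inherits measurability (resp.\ continuity) from $\tilde{g}$ via composition with the smooth square root and inversion, and that $h(X,Y,u_k,u_l)$ is then Lebesgue measurable on $W\times W$; since the product of Lebesgue $\sigma$-algebras sits inside the Lebesgue $\sigma$-algebra on the product, this goes through. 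In short: same countable-infimum idea for measurability, a simpler fixed-compact-parameter-space argument for continuity, which is a nice improvement over the paper's proof.
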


\begin{proof} Given $X\in W$ and $r\geq 0$, denote by $\tilde{B}_X(r)$ the closed $\tilde{g}_X$-ball with centre at the origin and radius $r$, i.e. $\tilde{B}_X(r):=\{Y\in W\,|\, \tilde{g}_X(Y)\leq r^2\}$. Notice that $\tilde{U}_{X,r}=X+\tilde{B}_X(r)$. For each fixed $Z\in W$ and $r\geq 0$, define the function:
$$
\kappa_{r;Z}:W\rightarrow [0,\infty),\quad
\left\{
\begin{array}{l}
\kappa_{r;Z}(X)=1,\,\, \mbox{if}\,\, Z\in \tilde{B}_X(r),\\
\kappa_{r;Z}(X)=0,\,\, \mbox{if}\,\, Z\not\in \tilde{B}_X(r).
\end{array}
\right.
$$
Notice that $\{X\in W\,|\, \kappa_{r;Z}(X)>c\}$ is the empty set if $c\geq 1$, is the whole $W$ if $c<0$ and is equal to $\{X\in W\,|\, \tilde{g}_X(Z)\leq r^2\}$ when $c\in[0,1)$. Consequently, $\kappa_{r;Z}$ is a measurable function on $W$ for each fixed $Z\in W$ and $r\geq 0$. Let $\{Z_j\in W\,|\, j\in\NN\}$ be a countable dense subset of $W$ which contains the origin and, for each $X\in W$, set $\tilde{J}_{l;X}:=\{j\in\NN\,|\, Z_j\in\tilde{B}_X(r_l)\}$, $l=1,2$. Clearly, $\tilde{J}_{l;X}\neq \emptyset$ and $\NN\backslash \tilde{J}_{l;X}\neq \emptyset$, for all $X\in W$, $l=1,2$. For $j,k\in\NN$, define
$$
f_{j,k}:W\times W\rightarrow [0,\infty),\quad f_{j,k}(X,Y):=\tilde{\tilde{g}}_{(X,Y)}(X-Y+\kappa_{r_1;Z_j}(X)Z_j- \kappa_{r_2;Z_k}(Y)Z_k).
$$
In view of the above, $f_{j,k}$ is measurable on $W\times W$ for all $j,k\in\NN$. Notice that
\begin{align}
\inf_{j\in\tilde{J}_{1;X},\, k\in \tilde{J}_{2;Y}} f_{j,k}(X,Y)&=\inf_{j\in \tilde{J}_{1;X},\, k\in\tilde{J}_{2;Y}}\tilde{\tilde{g}}_{(X,Y)}(X-Y+Z_j-Z_k)\nonumber\\
&= \inf_{Z'\in\tilde{B}_X(r_1),\, Z''\in\tilde{B}_Y(r_2)}\tilde{\tilde{g}}_{(X,Y)}(X-Y+Z'-Z'')\nonumber\\
&=\tilde{\tilde{g}}_{(X,Y)}(\tilde{U}_{X,r_1}-\tilde{U}_{Y,r_2}).\label{equ-for-meas-lesee}
\end{align}
When $j\not\in \tilde{J}_{1;X}$ and $k\in \tilde{J}_{2;Y}$, \eqref{equ-for-meas-lesee} implies
$$
f_{j,k}(X,Y)=\tilde{\tilde{g}}_{(X,Y)}(X-Y-Z_k)\geq \tilde{\tilde{g}}_{(X,Y)}(\tilde{U}_{X,r_1}-\tilde{U}_{Y,r_2})= \inf_{j\in\tilde{J}_{1;X},\, k\in\tilde{J}_{2;Y}}f_{j,k}(X,Y).
$$
Similarly, $f_{j,k}(X,Y)\geq \inf_{j\in\tilde{J}_{1;X},\, k\in\tilde{J}_{2;Y}}f_{j,k}(X,Y)$, when $j\in \tilde{J}_{1;X}$ and $k\not\in \tilde{J}_{2;Y}$, as well as in the case when $j\not\in \tilde{J}_{1;X}$ and $k\not\in \tilde{J}_{2;Y}$. Consequently, $\inf_{j,k\in\NN}f_{j,k}(X,Y)=\tilde{\tilde{g}}_{(X,Y)} (\tilde{U}_{X,r_1}-\tilde{U}_{Y,r_2})$, for all $X,Y\in W$, which proves the measurability of \eqref{equ-for-fun-met-measur}.\\
\indent Assume now that both $\tilde{g}$ and $\tilde{\tilde{g}}$ are continuous. Keeping the notations for $\tilde{B}_X(r)$, $X\in W$, $r\geq 0$, as above, it suffices to show that $W\times W\rightarrow [0,\infty)$, $(X,Y)\mapsto\tilde{\tilde{g}}_{(X,Y)}(X-Y+\tilde{B}_X(r_1)-\tilde{B}_Y(r_2))$, is continuous. Let $E_j$, $j=1,\ldots,2n$, be a basis of $W$ with $E'_j$, $j=1,\ldots, 2n$, being the dual basis of $W'$ and denote by $|\cdot|$ the following norm on $W$: $|X|:=(\sum_{j=1}^{2n} |\langle E'_j,X\rangle|^2)^{1/2}$, $X\in W$. For $X,Y\in W$, we define
$$
\tilde{g}_{j,k}(X):=\tilde{g}_X(E_j,E_k),\quad \tilde{\tilde{g}}_{j,k}(X,Y):=\tilde{\tilde{g}}_{(X,Y)}(E_j,E_k).
$$
We make the following\\
\\
\noindent \textbf{Claim.} Let $A$ be a compact subset of $W$ and $X',X'',Y',Y'',Z\in W$. Then
\begin{multline}\label{claim-ine-for-cont-met-rrs}
|\tilde{\tilde{g}}_{(X',Y')}(Z+A)-\tilde{\tilde{g}}_{(X'',Y'')}(Z+A)|\\
\leq (|Z|+\sup_{Z'\in A}|Z'|)^2 \sum_{j,k=1}^{2n}|\tilde{\tilde{g}}_{j,k}(X',Y')-\tilde{\tilde{g}}_{j,k}(X'',Y'')|.
\end{multline}

\noindent We postpone its proof for later and continue with the proof of the lemma. Set $r:=\max\{r_1,r_2\}\geq 0$. Let $X_0,Y_0\in W$ and $\varepsilon\in (0,1)$ be arbitrary but fixed. Set $Z_0:=X_0-Y_0$. There exists $C'\geq 1$ such that
\begin{gather}
\tilde{\tilde{g}}_{(X_0,Y_0)}(T)\leq C'^2\tilde{g}_{X_0}(T),\quad T\in W,\quad \mbox{and}\label{ine1-for-cont-met-c}\\
C'^{-2}\tilde{g}_X(T)\leq |T|^2\leq C'^2\tilde{g}_X(T),\quad T\in W,\; X\in \{Z\in W\,|\, |Z|\leq |X_0|+|Y_0|+1\}.\label{ine2-for-cont-met-c}
\end{gather}
Set $\varepsilon_1:=\varepsilon^2 (8C'^6(1+r)(|X_0|+|Y_0|+1))^{-2}$. There exists $\delta_1>0$ such that $\sum_{j,k=1}^{2n}|\tilde{g}_{j,k}(X)-\tilde{g}_{j,k}(X_0)|\leq \varepsilon_1$ when $|X-X_0|\leq \delta_1$ and $\sum_{j,k=1}^{2n}|\tilde{g}_{j,k}(Y)-\tilde{g}_{j,k}(Y_0)|\leq \varepsilon_1$ when $|Y-Y_0|\leq \delta_1$. Furthermore, there exists $\delta_2>0$ such that when $|X-X_0|\leq \delta_2$ and $|Y-Y_0|\leq \delta_2$ it holds that
$$
\sum_{j,k=1}^{2n}|\tilde{\tilde{g}}_{j,k}(X,Y)-\tilde{\tilde{g}}_{j,k}(X_0,Y_0)|\leq \varepsilon/(4(1+2rC'+|X_0|+|Y_0|)^2).
$$
Set $\delta:=\min\{\delta_1,\delta_2,\varepsilon/(16C'^4(|X_0|+|Y_0|+1))\}$. Let $X,Y\in W$ be such that $|X-X_0|\leq \delta$ and $|Y-Y_0|\leq \delta$; notice that both $X$ and $Y$ belong to the set on the right of \eqref{ine2-for-cont-met-c} so \eqref{ine2-for-cont-met-c} holds true. Write $Z=X-Y$ and estimate as follows:
\begin{align*}
|\tilde{\tilde{g}}_{(X,Y)}&(X-Y+\tilde{B}_X(r_1)-\tilde{B}_Y(r_2))- \tilde{\tilde{g}}_{(X_0,Y_0)}(X_0-Y_0+\tilde{B}_{X_0}(r_1)-\tilde{B}_{Y_0}(r_2))|\\
&\leq |\tilde{\tilde{g}}_{(X,Y)}(Z+\tilde{B}_X(r_1)-\tilde{B}_Y(r_2)) - \tilde{\tilde{g}}_{(X_0,Y_0)}(Z+\tilde{B}_X(r_1)-\tilde{B}_Y(r_2))|\\
&{}\quad+ |\tilde{\tilde{g}}_{(X_0,Y_0)}(Z+\tilde{B}_X(r_1)-\tilde{B}_Y(r_2)) -\tilde{\tilde{g}}_{(X_0,Y_0)}(Z_0+\tilde{B}_X(r_1)-\tilde{B}_Y(r_2))|\\
&{}\quad+|\tilde{\tilde{g}}_{(X_0,Y_0)}(Z_0+\tilde{B}_X(r_1)-\tilde{B}_Y(r_2))- \tilde{\tilde{g}}_{(X_0,Y_0)}(Z_0+\tilde{B}_{X_0}(r_1)-\tilde{B}_{Y_0}(r_2))|\\
&=:S_1+S_2+S_3.
\end{align*}
To estimate $S_1$, we apply the Claim. For $X'\in \tilde{B}_X(r_1)$ and $Y'\in \tilde{B}_Y(r_2)$, in view of \eqref{ine2-for-cont-met-c} we infer
$$
|X'|+|Y'|\leq C'\tilde{g}_X(X')^{1/2}+C'\tilde{g}_Y(Y')^{1/2}\leq 2rC'.
$$
Hence, \eqref{claim-ine-for-cont-met-rrs} implies $S_1\leq \varepsilon/4$. We estimate $S_2$ as follows (cf. \eqref{ine1-for-cont-met-c} and \eqref{ine2-for-cont-met-c})
\begin{align}
S_2&= |\tilde{\tilde{g}}_{(X_0,Y_0)}(Z+\tilde{B}_X(r_1)-\tilde{B}_Y(r_2))^{1/2} -\tilde{\tilde{g}}_{(X_0,Y_0)}(Z_0+\tilde{B}_X(r_1)-\tilde{B}_Y(r_2))^{1/2}|\nonumber\\
&{}\quad\cdot (\tilde{\tilde{g}}_{(X_0,Y_0)}(Z+\tilde{B}_X(r_1)-\tilde{B}_Y(r_2))^{1/2}+ \tilde{\tilde{g}}_{(X_0,Y_0)} (Z_0+\tilde{B}_X(r_1)-\tilde{B}_Y(r_2))^{1/2})\label{equ-for-s2con-mms}\\
&\leq \tilde{\tilde{g}}_{(X_0,Y_0)}(Z-Z_0)^{1/2}(\tilde{\tilde{g}}_{(X_0,Y_0)}(Z)^{1/2}+ \tilde{\tilde{g}}_{(X_0,Y_0)}(Z_0)^{1/2})\nonumber\\
&\leq C'^4|Z-Z_0|(|Z|+|Z_0|)\leq \varepsilon/4.\nonumber
\end{align}
To estimate $S_3$, we proceed as in \eqref{equ-for-s2con-mms} to infer
\begin{multline*}
S_3\leq 2C'^2(|X_0|+|Y_0|)\\
\cdot |\tilde{\tilde{g}}_{(X_0,Y_0)}(Z_0+\tilde{B}_X(r_1)-\tilde{B}_Y(r_2))^{1/2}- \tilde{\tilde{g}}_{(X_0,Y_0)}(Z_0+\tilde{B}_{X_0}(r_1)-\tilde{B}_{Y_0}(r_2))^{1/2}|.
\end{multline*}
Assume first that
\begin{equation}\label{equ-for-thir-part-est-spp}
\tilde{\tilde{g}}_{(X_0,Y_0)}(Z_0+\tilde{B}_{X_0}(r_1)-\tilde{B}_{Y_0}(r_2))^{1/2}\geq \tilde{\tilde{g}}_{(X_0,Y_0)}(Z_0+\tilde{B}_X(r_1)-\tilde{B}_Y(r_2))^{1/2}
\end{equation}
and pick $X'\in \tilde{B}_X(r_1)$ and $Y'\in\tilde{B}_Y(r_2)$ such that
$$
\tilde{\tilde{g}}_{(X_0,Y_0)}(Z_0+\tilde{B}_X(r_1)-\tilde{B}_Y(r_2))^{1/2}= \tilde{\tilde{g}}_{(X_0,Y_0)}(Z_0+X'-Y')^{1/2}.
$$
We claim that $(1+C'\sqrt{\varepsilon_1})^{-1}X'\in \tilde{B}_{X_0}(r_1)$ and $(1+C'\sqrt{\varepsilon_1})^{-1}Y'\in\tilde{B}_{Y_0}(r_2)$. To see this, notice that (cf. \eqref{ine2-for-cont-met-c})
\begin{equation*}
|\tilde{g}_{X_0}(X')-\tilde{g}_X(X')|\leq |X'|^2\sum_{j,k=1}^{2n}|\tilde{g}_{j,k}(X_0)-\tilde{g}_{j,k}(X)|\leq \varepsilon_1C'^2\tilde{g}_X(X')\leq \varepsilon_1C'^2r^2_1.
\end{equation*}
Hence, $\tilde{g}_{X_0}(X')\leq r^2_1(1+\varepsilon_1C'^2)\leq r^2_1(1+C'\sqrt{\varepsilon_1})^2$ which shows that $(1+C'\sqrt{\varepsilon_1})^{-1}X'\in\tilde{B}_{X_0}(r_1)$. The proof of $(1+C'\sqrt{\varepsilon_1})^{-1}Y'\in \tilde{B}_{Y_0}(r_2)$ is analogous. Thus, in view of \eqref{ine1-for-cont-met-c} and \eqref{ine2-for-cont-met-c}, we infer
\begin{align*}
S_3&\leq 2C'^2(|X_0|+|Y_0|)\\
&{}\quad \cdot (\tilde{\tilde{g}}_{(X_0,Y_0)}(Z_0+(1+C'\sqrt{\varepsilon_1})^{-1}(X'-Y'))^{1/2}- \tilde{\tilde{g}}_{(X_0,Y_0)}(Z_0+X'-Y')^{1/2})\\
&\leq 2C'^3\sqrt{\varepsilon_1}(|X_0|+|Y_0|)\tilde{\tilde{g}}_{(X_0,Y_0)}(X'-Y')^{1/2} \leq 2C'^4\sqrt{\varepsilon_1}(|X_0|+|Y_0|)\tilde{g}_{X_0}(X'-Y')^{1/2}\\
&\leq 2C'^4\sqrt{\varepsilon_1}(|X_0|+|Y_0|)(\tilde{g}_{X_0}(X')^{1/2}+ \tilde{g}_{X_0}(Y')^{1/2})\leq 4C'^6r\sqrt{\varepsilon_1}(|X_0|+|Y_0|)\leq \varepsilon/2.
\end{align*}
The proof of $S_3\leq \varepsilon/2$ when the opposite inequality in \eqref{equ-for-thir-part-est-spp} holds is analogous. The proof of the lemma is complete.\\
\indent It remains to show the Claim. Pick $T''\in A$ such that $\tilde{\tilde{g}}_{(X'',Y'')}(Z+A)=\tilde{\tilde{g}}_{(X'',Y'')}(Z+T'')$ and estimate as follows
\begin{align*}
\tilde{\tilde{g}}_{(X',Y')}(Z+T'')-\tilde{\tilde{g}}_{(X'',Y'')}(Z+A)&\leq |\tilde{\tilde{g}}_{(X',Y')}(Z+T'')-\tilde{\tilde{g}}_{(X'',Y'')}(Z+T'')|\\
&\leq |Z+T''|^2 \sum_{j,k=1}^{2n}|\tilde{\tilde{g}}_{j,k}(X',Y')-\tilde{\tilde{g}}_{j,k}(X'',Y'')|.
\end{align*}
Consequently,
\begin{align*}
\tilde{\tilde{g}}_{(X',Y')}(Z+A)&\leq \tilde{\tilde{g}}_{(X',Y')}(Z+T'')\\
&\leq (|Z|+\sup_{Z'\in A}|Z'|)^2 \sum_{j,k=1}^{2n}|\tilde{\tilde{g}}_{j,k}(X',Y')-\tilde{\tilde{g}}_{j,k}(X'',Y'')| +\tilde{\tilde{g}}_{(X'',Y'')}(Z+A).
\end{align*}
Interchanging the role of $X'$ and $Y'$ with $X''$ and $Y''$, we deduce \eqref{claim-ine-for-cont-met-rrs}.
\end{proof}

\begin{remark}\label{rem-for-meas-prod-borel}
If we strengthen the assumptions on $\tilde{\tilde{g}}$ by requiring it to be measurable with respect to the product of the Lebesgue $\sigma$-algebras on the two copies of $W$, then the first part of the proof implies that \eqref{equ-for-fun-met-measur} is also measurable with respect to the product of the Lebesgue $\sigma$-algebras. Consequently, for each $X$ (resp., for each $Y$), \eqref{equ-for-fun-met-measur} is a measurable function of $Y$ (resp., $X$) on $W$. Similarly, the first part of the proof also yields that if both $\tilde{g}$ and $\tilde{\tilde{g}}$ are Borel measurable, then the same holds for the function in \eqref{equ-for-fun-met-measur}.
\end{remark}

\begin{remark}\label{rem-for-measura-cont-when-met-ssh}
Taking $\tilde{g}_X=g_X$, $X\in W$, and $\tilde{\tilde{g}}_{(X,Y)}=g^{\sigma}_X$, $X,Y\in W$, in the above lemma, in view of Remark \ref{rem-for-meas-prod-borel}, we can deduce that
\begin{equation}\label{equ-for-fun-met-measur1}
(X,Y)\mapsto g^{\sigma}_X(Y-U_{X,r}),\quad (X,Y)\mapsto g^{\sigma}_X(X-U_{Y,r}),\quad (X,Y)\mapsto g^{\sigma}_X(U_{X,r}-U_{Y,r}),
\end{equation}
are measurable on $W\times W$ with respect to the product of the Lebesgue $\sigma$-algebras for any $r>0$; take $r_1=r$, $r_2=0$ for the first one, $r_1=0$, $r_2=r$ for the second and $r_1=r_2=r$ for the third. Hence, for every $X$ (resp., $Y$), the above are measurable functions of $Y$ (resp., $X$) on $W$. When $g$ is continuous, the lemma implies that all of the functions in \eqref{equ-for-fun-met-measur1} are also continuous.
\end{remark}

Next, we show the inequalities stated in Lemma \ref{tec-res-ine-for-hor-metr-ini}.

\begin{proof}[Proof of Lemma \ref{tec-res-ine-for-hor-metr-ini}] To prove \eqref{ineq-for-metric-p-1}, let $X'\in U_{X,r}$. Then
\begin{align*}
g_X(T)&\leq C_0g_{X'}(T)\leq C_0^2 g_Y(T)(1+g^{\sigma}_{X'}(Y-X'))^{N_0}\leq C_0^{N_0+2} g_Y(T)(1+g^{\sigma}_X(Y-X'))^{N_0},\\
g_Y(T)&\leq C_0g_{X'}(T)(1+g^{\sigma}_{X'}(Y-X'))^{N_0}\leq C_0^{N_0+2}g_X(T)(1+g^{\sigma}_X(Y-X'))^{N_0},
\end{align*}
and \eqref{ineq-for-metric-p-1} immediately follows from this. In order to prove \eqref{ineq-for-metric-p-2}, notice that \eqref{ineq-for-metric-p-1} gives
$$
g_Y(X-Y)\leq C_0^{N_0+2}g_X(X-Y)(1+g^{\sigma}_X(Y-U_{X,r}))^{N_0}.
$$
Since  $g_X(X-Y)\leq 2r^2+2g_X(X'-Y)\leq 2r^2+2g^{\sigma}_X(Y-X')$, for $X'\in U_{X,r}$, the validity of \eqref{ineq-for-metric-p-2} follows. In order to verify \eqref{ineq-for-metric-p-3}, notice that \eqref{ineq-for-metric-p-1} gives
$$
g^{\sigma}_Y(Y-U_{X,r})\leq g^{\sigma}_Y(Y-X')\leq C_0^{N_0+2}g^{\sigma}_X(Y-X')(1+g^{\sigma}_X(Y-U_{X,r}))^{N_0},\; X'\in U_{X,r}.
$$
This implies \eqref{ineq-for-metric-p-3}. We turn our attention to \eqref{ineq-for-metric-p-4}. As $|g_X|$ and $|g^{\sigma}_X|$ are the same in any symplectic basis, \eqref{ineq-for-metric-p-4} follows once we evaluate them in a basis $E_j$, $j=1,\ldots,2n$, which satisfies $0<g_X(E_j,E_j)=1/g^{\sigma}_X(E_j,E_j)\leq 1$, $j=1,\ldots,2n$, and $g_X(E_j,E_k)=g^{\sigma}_X(E_j,E_k)=0$ when $j\neq k$; see \cite[Lemma 4.4.25, p. 339]{lernerB} concerning the existence of such symplectic basis. The validity of \eqref{ineq-for-metric-p-5} and \eqref{ineq-for-metric-p-6} follow from the following classical fact: Let $A$ and $B$ be positive-definite symmetric matrices with real entries. If $A-B$ is positive semi-definite then $\det A\geq \det B$. We only show \eqref{ineq-for-metric-p-6} as the proof of \eqref{ineq-for-metric-p-5} is similar. Let $E_j$, $j=1,\ldots,2n$, be a symplectic basis of $W$ and for each $X\in W$ denote by $A^{\sigma}_X$ the matrix $(g^{\sigma}_X(E_j,E_k))_{j,k}$. Then \eqref{ineq-for-metric-p-1} implies that $C_0^{N_0+2}(1+g^{\sigma}_X(Y-U_{X,r}))^{N_0}A^{\sigma}_Y-A^{\sigma}_X$ is positive semi-definite and hence, $|g^{\sigma}_X|\leq |g^{\sigma}_Y|C_0^{2nN_0+4n}(1+g^{\sigma}_X(Y-U_{X,r}))^{2nN_0}$. The proof that $|g^{\sigma}_Y|/|g^{\sigma}_X|$ and $(|g_X|/|g_Y|)^{\pm 1}$ have the same bound is analogous. The proof of \eqref{ineq-for-metric-p-7} is similar: employing the same technique one deduces (cf. \eqref{ineq-for-metric-p-4})
\begin{equation}\label{inequ-for-met-sim-vol-der}
C_0^{-2n}|g_0|(1+g^{\sigma}_0(X))^{-2nN_0}\leq |g_X|\leq |g^{\sigma}_X|\leq C_0^{2n}|g^{\sigma}_0|(1+g^{\sigma}_0(X))^{2nN_0},\; X\in W,
\end{equation}
which implies the validity of \eqref{ineq-for-metric-p-7}. It remains to prove \eqref{ineq-for-metric-p-3-1}. In view of \eqref{ineq-for-metric-p-2} and \eqref{ineq-for-metric-p-6}, we infer that the integral in \eqref{ineq-for-metric-p-3-1} is uniformly bounded with
$$
C'\int_W (1+g_Y(X-Y))^{-n-1}|g_Y|^{1/2}dX,\quad Y\in W,
$$
and the latter integral is uniformly bounded  (by a single constant) for all $Y\in W$.
\end{proof}

\section{Proof of the property $(d)$ in Example \ref{exi-of-good-par-off} $(ii)$}\label{appendix2-proof-dexm}

First we show the following bounds: for every $k,l,m\in\NN$ there is $C\geq 1$ such that
\begin{multline}\label{ine-for-met-der-on-all-var}
\left|\left(\prod_{j=1}^k\partial_{T_j;X}\right) \left(\prod_{j=1}^l\partial_{S_j;Y}\right) \left(\prod_{j=1}^m\partial_{S'_j;Z}\right) \tilde{g}_X(Y,Z)\right|\leq Cg_X(Y)^{(1-l)/2}g_X(Z)^{(1-m)/2}\\
\cdot\left(\prod_{j=1}^k g_X(T_j)^{1/2}\right)\left(\prod_{j=1}^l g_X(S_j)^{1/2}\right) \left(\prod_{j=1}^m g_X(S'_j)^{1/2}\right),
\end{multline}
for all $X,Y,Z,T_1,\ldots, T_k, S_1,\ldots, S_l, S'_1,\ldots, S'_m\in W$. Notice that the left-hand side of \eqref{ine-for-met-der-on-all-var} is identically equal to $0$ if $\max\{l,m\}\geq 2$. We prove \eqref{ine-for-met-der-on-all-var} when $l=m=1$; the proof when one or both of $l$ and $m$ are zero is analogous and we omit it. The definition of $\tilde{g}$ together with the fact $\supp\psi_X\subseteq U_{X,r_0}$, $X\in W$, gives
\begin{equation*}
\frac{\left|\left(\prod_{j=1}^k\partial_{T_j;X}\right) \partial_{S;Y}\partial_{S';Z} \tilde{g}_X(Y,Z)\right|}{g_X(S)^{1/2} g_X(S')^{1/2}\prod_{j=1}^k g_X(T_j)^{1/2}}\leq C_1\int_W \frac{\left|\left(\prod_{j=1}^k\partial_{T_j;X}\right)\psi_{\widetilde{X}}(X)\right|} {\prod_{j=1}^k g_{\widetilde{X}}(T_j)^{1/2}}dv_g(\widetilde{X}).
\end{equation*}
In view of \eqref{ineq-for-metric-p-3-1}, the integral is uniformly bounded for all $X\in W$ and the validity of \eqref{ine-for-met-der-on-all-var} follows. As a consequence of \eqref{ine-for-met-der-on-all-var}, we will derive the following bounds: for every $k\in\NN$ there is $C\geq 1$ such that
\begin{align}
\left|\left(\prod_{j=1}^k\partial_{T_j;X}\right)\tilde{g}_X(X-Y,Z)\right|&\leq C (1+g_X(X-Y))^{1/2}g_X(Z)^{1/2}\prod_{j=1}^kg_X(T_j)^{1/2}, \label{bound-der-of-the-metr-for-add1}\\
\left|\left(\prod_{j=1}^k\partial_{T_j;X}\right)\tilde{g}_X(X-Y)\right|&\leq C (1+g_X(X-Y))\prod_{j=1}^kg_X(T_j)^{1/2},\label{bound-der-of-the-metr-for-add2}
\end{align}
for all $X,Y,Z,T_1,\ldots,T_k\in W$. To verify them, denote by $\partial_{T;1}$, $\partial_{T;2}$ and $\partial_{T;3}$ the derivations in direction $T\in W$ in the variables $X$, $Y$ and $Z$ respectively of the function $W\times W\times W\rightarrow \RR$, $(X,Y,Z)\mapsto \tilde{g}_X(Y,Z)$. Notice that $\partial_{T;X}(\tilde{g}_X(X-Y,Z))=(\partial_{T;1}+\partial_{T;2})g_X(X-Y,Z)$. Consequently, $\left(\prod_{j=1}^k\partial_{T_j;X}\right)\tilde{g}_X(X-Y,Z)$ is a sum of $2^k$ terms of the form
\begin{equation}\label{est-for-par-of-der-of-the-metr-p11}
\left(\prod_{j\in K_1}\partial_{T_j;1}\right)\left(\prod_{j\in K_2}\partial_{T_j;2}\right) \tilde{g}_X(X-Y,Z),
\end{equation}
where the sets $K_1$ and $K_2$ are disjoint and their union is $\{1,\ldots,k\}$. Now, \eqref{ine-for-met-der-on-all-var} implies that the absolute value of \eqref{est-for-par-of-der-of-the-metr-p11} is bounded by the right-hand side of \eqref{bound-der-of-the-metr-for-add1} (\eqref{est-for-par-of-der-of-the-metr-p11} is $0$ if $|K_2|\geq 2$) and the validity of \eqref{bound-der-of-the-metr-for-add1} follows. To verify \eqref{bound-der-of-the-metr-for-add2}, notice that
\begin{equation*}
\partial_{T;X}(\tilde{g}_X(X-Y))= (\partial_{T;1}+\partial_{T;2}+\partial_{T;3})\tilde{g}_X(X-Y,X-Y).
\end{equation*}
Hence, $\left(\prod_{j=1}^k\partial_{T_j;X}\right)\tilde{g}_X(X-Y)$ is a sum of $3^k$ terms of the form
\begin{equation}\label{est-for-par-of-der-of-the-metr-p12}
\left(\prod_{j\in K_1}\partial_{T_j;1}\right)\left(\prod_{j\in K_2}\partial_{T_j;2}\right) \left(\prod_{j\in K_3}\partial_{T_j;3}\right)\tilde{g}_X(X-Y,X-Y),
\end{equation}
where the sets $K_1$, $K_2$ and $K_3$ are pairwise disjoint and their union is $\{1,\ldots,k\}$. Now, similarly as above, the validity of \eqref{bound-der-of-the-metr-for-add2} follows from \eqref{ine-for-met-der-on-all-var}. We now prove $(d)$. Since $1/I_{\tilde{\boldsymbol{\varphi}}}\in S(1,g)$, in view of \eqref{ineq-for-metric-p-1} it suffices to prove the bound in $(d)$ for $\tilde{\varphi}_X$. Let $k,l\in\NN$ and let $k'\leq k$ and $l'\leq l$. By induction on $l'$, one verifies that $\left(\prod_{j=1}^{l'}\partial_{S_j;Y}\right) \tilde{\varphi}_X(Y)$ is a finite sum of terms of the form
\begin{equation*}
C'^{2p'}r^{-2p'}\left(\prod_{s=1}^{l'-p'}\tilde{g}_X(S_{j'_s},S_{j''_s})\right) \left(\prod_{s=1}^{2p'-l'}\tilde{g}_X(X-Y,S_{j_s})\right) \chi^{(p')}_0(C'^2r^{-2}\tilde{g}_X(X-Y)),
\end{equation*}
where $p'\in[l'/2,l']\cap\NN$ and $(j'_1,\ldots,j'_{l'-p'},j''_1,\ldots,j''_{l'-p'},j_1,\ldots,j_{2p'-l'})$ is a permutation of $\{1,\ldots,l'\}$. Consequently, $\left(\prod_{j=1}^{k'}\partial_{T_j;X}\right) \left(\prod_{j=1}^{l'}\partial_{S_j;Y}\right) \tilde{\varphi}_X(Y)$ is a finite sum of terms of the form
\begin{align}
C'^{2p'}r^{-2p'}&\left(\prod_{s=1}^{l'-p'}\left(\prod_{q''\in K''_s} \partial_{T_{q''};X}\right)\tilde{g}_X(S_{j'_s},S_{j''_s})\right)\nonumber\\
&\cdot\left(\prod_{s=1}^{2p'-l'}\left(\prod_{q'\in K'_s}\partial_{T_{q'};X}\right)\tilde{g}_X(X-Y,S_{j_s})\right)\nonumber\\
&\cdot\left(\prod_{q\in K}\partial_{T_q;X}\right) \chi^{(p')}_0(C'^2r^{-2}\tilde{g}_X(X-Y)),\label{bound-for-the-der-of-the-metr-sum-ter}
\end{align}
where the sets $K$, $K'_s$, $s=1,\ldots,2p'-l'$, and $K''_s$, $s=1,\ldots,l'-p'$, are pairwise disjoint and their union is $\{1,\ldots,k'\}$. Notice that \eqref{ine-for-met-der-on-all-var} immediately gives
\begin{equation}\label{est-first-part-der-of-the-metr}
\prod_{s=1}^{l'-p'}\left|\left(\prod_{q''\in K''_s} \partial_{T_{q''};X}\right)\tilde{g}_X(S_{j'_s},S_{j''_s})\right|\leq C'_1 \prod_{s=1}^{l'-p'}g_X(S_{j'_s})^{1/2}g_X(S_{j''_s})^{1/2}\left(\prod_{q''\in K''_s} g_X(T_{q''})^{1/2}\right),
\end{equation}
while \eqref{bound-der-of-the-metr-for-add1} yields
\begin{multline}\label{est-second-part-der-of-the-metr}
\prod_{s=1}^{2p'-l'}\left|\left(\prod_{q'\in K'_s}\partial_{T_{q'};X}\right)\tilde{g}_X(X-Y,S_{j_s})\right|\\
\leq C'_2 (1+g_X(X-Y))^{p'-l'/2}\prod_{s=1}^{2p'-l'}g_X(S_{j_s})^{1/2}\left(\prod_{q'\in K'_s}g_X(T_{q'})^{1/2}\right).
\end{multline}
To estimate the last term in \eqref{bound-for-the-der-of-the-metr-sum-ter}, first we prove the following bounds: for every $m\in\NN$ there is $C\geq 1$ such that
\begin{multline}\label{est-der-metr-term-with-fun1}
\left|\left(\prod_{j=1}^m\partial_{\tilde{T}_j;X}\right) \chi^{(p')}_0(C'^2r^{-2}\tilde{g}_X(X-Y))\right|\leq C (1+g_X(X-Y))^m\\
\cdot\left(\prod_{j=1}^mg_X(\tilde{T}_j)^{1/2}\right)\sup_{m'\leq m}|\chi^{(p'+m')}_0(C'^2r^{-2}\tilde{g}_X(X-Y))|,
\end{multline}
for all $X,Y,\tilde{T}_1,\ldots,\tilde{T}_m\in W$. When $m=0$, \eqref{est-der-metr-term-with-fun1} trivially holds. To verify it for $m\in\ZZ_+$, observe that, by induction on $m$, one can prove that $\left(\prod_{j=1}^m\partial_{\tilde{T}_j;X}\right) \chi^{(p')}_0(C'^2r^{-2}\tilde{g}_X(X-Y))$ is a finite sum of terms of the form
$$
C'^{2m'}r^{-2m'}\chi^{(p'+m')}_0(C'^2r^{-2}\tilde{g}_X(X-Y)) \prod_{j=1}^{m'}\left(\left(\prod_{s\in J_j}\partial_{\tilde{T}_s;X}\right)\tilde{g}_X(X-Y)\right),
$$
where $m'\in[1,m]\cap\ZZ_+$ and the sets $J_1,\ldots,J_{m'}$ are pairwise disjoint and their union is $\{1,\ldots,m\}$. Now, the validity of \eqref{est-der-metr-term-with-fun1} follows from \eqref{bound-der-of-the-metr-for-add2}. We apply \eqref{est-der-metr-term-with-fun1} to deduce the following bounds for the last term in \eqref{bound-for-the-der-of-the-metr-sum-ter}:
\begin{multline}\label{est-third-part-der-of-the-metr}
\left|\left(\prod_{q\in K}\partial_{T_q;X}\right) \chi^{(p')}_0(C'^2r^{-2}\tilde{g}_X(X-Y))\right|\leq C'_3 (1+g_X(X-Y))^{|K|}\\
\cdot\left(\prod_{q\in K}g_X(T_q)^{1/2}\right)\sup_{m'\leq |K|} |\chi^{(p'+m')}_0(C'^2r^{-2}\tilde{g}_X(X-Y))|.
\end{multline}
Combining \eqref{est-first-part-der-of-the-metr}, \eqref{est-second-part-der-of-the-metr} and \eqref{est-third-part-der-of-the-metr} together with the fact that $\chi_0(C'^2r^{-2}\tilde{g}_X(X-Y))$ is identically equal to $0$ when $g_X(X-Y)>r^2$, we infer that \eqref{bound-for-the-der-of-the-metr-sum-ter} is bounded by
$$
C''\left(\prod_{j=1}^{k'}g_X(T_j)^{1/2}\right) \left(\prod_{j=1}^{l'}g_X(S_j)^{1/2}\right) \sup_{m'\leq k'+l'} |\chi^{(m')}_0(C'^2r^{-2}\tilde{g}_X(X-Y))|,
$$
which implies the validity of the $(d)$ with $\tilde{\varphi}_X$ in place of $\varphi_X$. As we commented above, this implies the validity of $(d)$ for $\varphi_X$.


\end{document}